\newtheoremstyle{exampstyle}
  {\topsep} 
  {\topsep} 
  {\itshape} 
  {} 
  {\bfseries} 
  {.} 
  {.5em} 
  {} 
\theoremstyle{exampstyle}
\newtheorem{thm}{\protect\theoremname}
  \theoremstyle{exampstyle}
  \newtheorem{lem}{Lemma}
\newtheorem{prop}{Proposition}
\newtheorem{rem}{Remark}
\newcommand\numberthis{\addtocounter{equation}{1}\tag{\theequation}}
\newcommand{\sgn}{\text{sgn}}
\newcommand{\dx}{\mathrm{d}x}
\newcommand{\dmu}{\mathrm{d}\mu}
\newcommand{\dy}{\mathrm{d}y}
\newcommand{\bigO}{\mathcal{O}}
\newcommand{\smallo}{o}
\newcommand{\smalloP}{o_{\mathbb{P}}}
\newcommand{\bigOP}{\bigO_{\mathbb{P}}}
\renewcommand{\ln}{\ell_{n}}
\newcommand{\lIS}{\ell_{n,m}^{\mathrm{IS}}}
\newcommand{\lNCE}{\ell_{n,m}^{\mathrm{NCE}}}
\newcommand{\R}{\mathbb{R}}
\newcommand{\Z}{\mathcal{Z}}
\newcommand{\Zt}{\mathcal{Z}(\theta)}
\newcommand{\Zp}{\mathcal{Z}(\psi)}
\newcommand{\mle}{\hat{\theta}_{n}}
\newcommand{\mcmle}{\hat{\theta}_{n,m}^{\mathrm{IS}}}
\newcommand{\nce}{\hat{\theta}_{n,m}^{\mathrm{NCE}}}
\newcommand{\emle}{\hat{\xi}_{n}}
\newcommand{\emcmle}{\hat{\xi}_{n,m}^{\mathrm{IS}}}
\newcommand{\ence}{\hat{\xi}_{n,m}^{\mathrm{NCE}}}
\newcommand{\setX}{\mathcal{X}}
\newcommand{\as}{\mbox{\mbox{a.s.}}}
\newcommand{\cvm}{\text{\ensuremath{\underset{m\rightarrow\infty}{\rightarrow}}}}
\newcommand{\cvn}{\text{\ensuremath{\underset{n\rightarrow\infty}{\rightarrow}}}}
\newcommand{\cvd}{\stackrel{\mathcal{D}}{\rightarrow}}
\newcommand{\cvdm}{\mathrel{\mathop{\rightarrow}_{m\rightarrow\infty}^{\mathcal{D}}}}
\newcommand{\refdist}{\mathbb{P}_{\psi}}
\newcommand{\Pt}{\mathbb{P}_{\theta}}
\newcommand{\trueth}{\theta^{\star}}
\newcommand{\truee}{\xi^{\star}}
\newcommand{\glNCE}{\Psi^{\mathrm{NCE}}}
\newcommand{\glIS}{\Psi^{\mathrm{IS}}}
\newcommand{\glISk}{\Psi_{k}^{\mathrm{IS}}}
\title{Noise contrastive estimation: asymptotic properties, formal comparison
with MCMC-MLE}
\author{Lionel Riou-Durand and Nicolas Chopin (ENSAE-CREST)}
\providecommand{\corollaryname}{Corollary}
\providecommand{\theoremname}{Theorem}
\begin{document}

\title{Noise contrastive estimation: asymptotics, comparison 
with MC-MLE}

\author{Lionel Riou-Durand and Nicolas Chopin (ENSAE-CREST)}
\maketitle
\begin{abstract}
A statistical model is said to be un-normalised when its
likelihood function involves an intractable normalising constant.
Two popular methods for parameter inference for these
models are MC-MLE (Monte Carlo maximum likelihood estimation),
and NCE (noise contrastive estimation); both methods rely on simulating
artificial data-points to approximate the normalising constant. While the 
asymptotics of MC-MLE have been established
under general hypotheses \citep{geyer_1994}, this is not so for NCE.
We establish consistency and asymptotic normality of NCE estimators under mild
assumptions. We compare NCE and MC-MLE under several asymptotic
regimes. In particular, we show that, when $m\rightarrow\infty$ while $n$ is
fixed ($m$ and $n$ being respectively the number of artificial data-points, and
actual data-points), the two estimators are asymptotically equivalent.
Conversely, we prove that, when the artificial data-points are IID, and when
$n\rightarrow\infty$ while $m/n$ converges to a positive constant, the
asymptotic variance of a NCE estimator is always smaller than the asymptotic
variance of the corresponding MC-MLE estimator. We illustrate the variance
reduction brought by NCE through a numerical study. 
\end{abstract}

\section{Introduction}

Consider a 
set of probability densities $\{f_{\theta}:\theta\in\Theta\}$ with
respect to some measure $\mu$, defined on a space $\mathcal{X}$, such that:
\[
f_{\theta}(x)=\frac{h_{\theta}(x)}{\Zt}
\]
where $h_\theta$ is non-negative, and $\Zt$ is a normalising constant,
$\Zt=\int_{\mathcal{X}} h_{\theta}(x)\mu(\dx)$. A model based on such 
a family of densities is said to be 
un-normalised if function $h_\theta$ may be computed point-wise, but 
$\Zt$ is not available (i.e. it may not be computed in a reasonable CPU time). 

Un-normalised models arise in several areas of machine learning and
Statistics, such as deep learning \citep{deep_learning},
computer vision \citep{computer_vision}, image
segmentation \citep{image_segmentation}, social network modelling \citep{caimo_friel},
directional data modelling \citep{directional_data_modelling}, among others. In most
applications, data-points are assumed to be IID (independent and identically
distributed); see however e.g. \citet{mnih_teh} or \citet{poisson_transform}
for applications of non-IID un-normalised models. In that spirit, 
we consider an un-normalised model of IID variables $Y_1,\ldots,Y_n$, with 
log-likelihood (divided by $n$): 
\begin{equation}
\ell_{n}(\theta)=\frac{1}{n}\sum_{i=1}^{n}\log h_{\theta}(y_{i})-\log\Zt.\label{eq:loglik_original}
\end{equation}
The fact that $\Zt$ is intractable precludes standard maximum likelihood
estimation. 

\citet{geyer_1994} wrote a seminal paper on un-normalised models,
in which he proposed to estimate $\theta$ by maximising function
\begin{equation}
\lIS(\theta)=\frac{1}{n}\sum_{i=1}^{n}\log\frac{h_{\theta}(y_{i})}{h_{\psi}(y_{i})}-\log\left\{ \frac{1}{m}\sum_{j=1}^{m}\frac{h_{\theta}(x_{j})}{h_{\psi}(x_{j})}\right\} \label{eq:lIS_original}
\end{equation}
where the $x_{j}$'s are $m$ artificial data-points generated from a user-chosen 
distribution $\refdist$ with density $f_{\psi}(x)=h_{\psi}(x)/\Zp$.
The empirical average inside the second log is a consistent (as $m\rightarrow\infty$)
importance sampling estimate of $\Zt/\Zp$. Function $\lIS$ is thus an
approximation of the log-likelihood ratio $\ell_{n}(\theta)-\ell_{n}(\psi)$, whose maximiser is the MLE.

In many applications, the easiest way to sample from $\refdist$ is to use 
MCMC (Markov chain Monte Carlo). \citet{geyer_1994} established
the asymptotic properties of the MC-MLE estimates under general
conditions; in particular that the $x_{j}$'s are realisations
of an ergodic process. This is remarkable, given that most of the theory
on M-estimation (i.e. estimation obtained by maximising functions)
is restricted to IID data.

More recently, \citet{nce_2012} proposed an alternative approach
to parameter estimation of un-normalised models, called noise contrastive
estimation (NCE). It also relies on simulating artificial data-points
$x_{1},\ldots,x_{m}$ from distribution $\refdist$. 
The method consists in maximising the
likelihood of a logistic classifier, where actual
(resp. artificial) data-points are assigned label 1 (resp. 0). With
symbols: 
\begin{equation}
\lNCE(\theta,\nu)=\sum_{i=1}^{n}\log q_{\theta,\nu}(y_{i})+\sum_{i=1}^{m}\log\left\{ 1-q_{\theta,\nu}(x_{i})\right\} \label{eq:lNCE}
\end{equation}
where $q_{\theta,\nu}(x)$, the probability of label $1$ for a value
$x$, is defined through odd-ratio function: 
\[
\log\left\{ \frac{q_{\theta,\nu}(x)}{1-q_{\theta,\nu}(x)}\right\} =\log\left\{ \frac{h_{\theta}(x)}{h_{\psi}(x)}\right\} +\nu+\log\left(\frac{n}{m}\right).
\]
The NCE estimator of $\theta$ is obtained by maximising function
$\lNCE(\theta,\nu)$ with respect to both $\theta\in\Theta$ and $\nu\in\R$.
In particular, when the considered model is exponential,
i.e. when $h_{\theta}(x)=\exp\big\{ \theta^{T}S(x)\big\} $, for some statistic
$S$, $\lNCE$ is the log-likelihood of a standard
logistic regression, with covariate $S(x)$. In that case, implementing
NCE is particularly straightforward. 

This paper has two objectives: first, to establish the asymptotic
properties of NCE when the artificial data-points are generated from an ergodic process (typically a MCMC sampler) in order to show that NCE is as widely applicable as MC-MLE; second, to compare the statistical efficiency of both
methods. 

As a preliminary step, we replace the original log-likelihood by its Poisson
transform \citep{poisson_transform}: 
\begin{equation}
\ell_{n}(\theta,\nu)=\frac{1}{n}\sum_{i=1}^{n}\log\left\{
\frac{h_{\theta}(y_{i})}{h_{\psi}(y_{i})}\right\}
+\nu-e^{\nu}\times\frac{\Zt}{\Zp} . \label{eq:loglik_extended}
\end{equation}
This function is the log-likelihood (up to a linear transformation) of the Poisson process with 
intensity $h_\theta(y) + \nu$. It produces exactly the same MLE as the original
likelihood: 
$(\widehat{\theta}_{n},\widehat{\nu}_{n})$
maximises \eqref{eq:loglik_extended} if and only if $\mle$ maximises
\eqref{eq:loglik_original} and $\widehat{\nu}_{n}=\log\left\{\Zp/\Z(\mle)\right\}$.

In the same way, we replace the MC-MLE log-likelihood by function
\begin{equation}
\lIS(\theta,\nu)=\frac{1}{n}\sum_{i=1}^{n}\log\left\{
\frac{h_{\theta}(y_{i})}{h_{\psi}(y_{i})}\right\}
+\nu-\frac{e^{\nu}}{m}\sum_{j=1}^{m}\frac{h_{\theta}(x_{j})}{h_{\psi}(x_{j})}
\label{eq:lIS_extended}
\end{equation}
which has the same maximiser
(with respect to $\theta$) as function \eqref{eq:lIS_original}.

We thus obtain three objective functions defined with respect to the
same parameter space, $\Theta\times\R$. This will greatly facilitate
our analysis. 
%
The paper is organised as follows. In Section \ref{sec:Set-up-and-notations},
we introduce the set up and notations. In Section \ref{sec:n_fixed}, we study
the behaviour of the NCE estimator as $m\rightarrow\infty$ (while $n$ is kept
fixed). We prove that the NCE estimator converges to the MLE at the same $m^{-1/2}$ 
rate as the MC-MLE estimator, and the difference between the two estimators
converges faster, at rate $m^{-1}$. 
In Section \ref{sec:both_n_and_m}, we let both $m$ and $n$ go to infinity while  
$m/n \rightarrow \tau>0$.  We obtain asymptotic variances
for both estimators, which admit a simple and interpretable decomposition.
Using this decomposition, we are able to establish that when the artificial
data-points are IID, the asymptotic variance of NCE is always smaller than the
asymptotic variance of MC-MLE (for the same computational budget). Section
\ref{sec:Numerical-study} assesses this variance reduction in a numerical
example. Section \ref{sec:Conclusion} discusses the practical implications of
our results. All proofs are delegated to the appendix.

\section{Set-up and notations\label{sec:Set-up-and-notations}}

Let $\Theta$ be an open subset of $\mathbb{R}^d$. We consider a parametric
statistical model $\{\Pt^{\otimes n} : \theta\in\Theta\}$,
corresponding to $n$ IID data-points lying in space $\setX\subset\R^{k}$. We
assume that the model is identifiable, and
equipped with some dominating measure $\mu$, inducing the log-likelihood
\eqref{eq:loglik_original}. From now on,
we work directly with the ``extended'' version of approximate and
exact log-likelihoods, i.e. functions \eqref{eq:lNCE}, \eqref{eq:loglik_extended}
and \eqref{eq:lIS_extended}, which are functions of extended parameter
$\xi=(\theta,\nu)$, with $\xi\in\Xi=\Theta\times\R$. When convenient,
we also write $\ln(\xi)$ for $\ln(\theta,\nu)$ and so on. 
An open ball in $\Xi$, centered on $\xi$ and of radius $\epsilon$, 
is denoted $B(\xi,\epsilon)$. We may also use this notation for balls 
in $\Theta$. 

The point of this paper is to study and compare point estimates $\emcmle$ and
$\ence$, which maximise functions \eqref{eq:lIS_extended} and \eqref{eq:lNCE}.
For the sake of generality, we allow these estimators to be approximate
maximisers; i.e. we will refer to $\emcmle$ as an approximate MC-MLE if
\begin{equation}
\lIS(\emcmle)\geq\sup_{\xi\in\Xi}\lIS(\xi)-\smallo(1)\quad\as\label{eq:def_approx_max}
\end{equation}
and with a similar definition for $\ence$. The meaning of symbol $\smallo(1)$ in \eqref{eq:def_approx_max}
depends on the asymptotic regime: in Section \ref{sec:n_fixed}, 
$n$ is kept fixed, while $m\rightarrow \infty$, hence 
$\smallo(1)$ means ``\textit{converges to zero as $m\rightarrow\infty$}''.
In Section \ref{sec:both_n_and_m}, both $m$ and $n$ go to infinity, 
and the meaning of $\smallo(1)$ must be adapted accordingly.

In both asymptotic regimes, the main assumption regarding the sampling process is
as follows. 
\begin{description}
\item [{(X1)}] The artificial data-points are realisations of a $\refdist-$ergodic process
$(X_{j})_{j\ge1}$. 
\end{description}
By $\refdist-$ergodicity, we mean that the following law of large number
holds: 
\[
\frac{1}{m}\sum_{j=1}^{m}\varphi(X_{j})\cvm\mathbb{E}_{\psi}\left[\varphi(X)\right]=\int_{\setX}\varphi(x)f_{\psi}(x)\mu(\dx)
\]
for any measurable, real-valued function $\varphi$ such that
$\mathbb{E}_{\psi}\left[|\varphi(X)|\right]<+\infty$. 

Assumption (X1) is mild. For instance, if the $X_{j}$'s are
generated by a MCMC algorithm, this is
equivalent to assuming that the simulated chain is aperiodic and irreducible,
which is true for all practical MCMC samplers; see e.g. \citet{gareth_rosenthal}.

Finally, note that, 
although notation $\refdist$ suggests that the distribution of the
artificial data-points belongs to the considered parametric model, this is not
compulsory. The only required assumption is that the model is dominated by
$\refdist$ (i.e. $\Pt\ll\refdist$ for every $\theta\in\Theta$).

\section{Asymptotics of the Monte Carlo error\label{sec:n_fixed}}

In this section, the analysis is conditional on the observed data: $n$ and
$y_1,...,y_n$ are fixed. 
The only source of randomness is the Monte Carlo
error, and the quantity we seek to estimate is the (intractable) MLE.  
This regime was first studied for MC-MLE by \citet{geyer_1994}.  For
convenience, we suppose that the MLE exists and is unique; or equivalently that
$\emle=(\mle,\widehat{\nu}_n)$ is the unique maximiser of $\ln$.

\subsection{Consistency\label{subsec:Consistency}}

We are able to prove NCE consistency (towards the MLE) using the same approach
as \citet{geyer_1994} for MC-MLE. Our consistency result relies on the
following assumptions:
\begin{description}
\item [{(C1)}] The random sequence $\big(\ence\big)_{m\geq1}$ is an approximate NCE estimator, which belongs to a compact set almost surely.
\item [{(H1)}] The maps $\theta\mapsto h_{\theta}(x)$ are: 
\begin{enumerate}
\item lower semi-continuous at each $\theta\in\Theta$, except for $x$
in a $\mathbb{P}_{\psi}$-null set that may depend on $\theta$;
\item upper semi-continuous, for any $x$ not in a $\mathbb{P}_{\psi}$-null
set (that does not depend on $\theta$), and for all $x=y_{i}$, $i=1,\ldots,n$.
\end{enumerate}
\end{description}

\begin{thm}
\label{thm:nce_consistency}Under assumptions (X1), (C1) and (H1), almost surely: $\ence\cvm\emle$.
\end{thm}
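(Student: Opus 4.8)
I would mimic \citet{geyer_1994}: the plan is to exhibit a normalised version of the NCE criterion that hypo-converges (the ``maximisation'' counterpart of epi-convergence) almost surely to the extended log-likelihood $\ln$ of \eqref{eq:loglik_extended}, and then read off convergence of the (approximate) maximisers. Since maximisers are unaffected by subtracting an $\xi$-independent quantity, I would work with $g_{m}(\theta,\nu):=\frac1n\lNCE(\theta,\nu)-\log(n/m)$; using $\log q=\logit q+\log(1-q)$, the definition of $\logit q_{\theta,\nu}$ and $\log\{1-q_{\theta,\nu}(x)\}=-\log\{1+\tfrac nm e^{\nu}h_{\theta}(x)/h_{\psi}(x)\}$, a short computation gives
\[
g_{m}(\theta,\nu)=\frac1n\sum_{i=1}^{n}\log\frac{h_{\theta}(y_{i})}{h_{\psi}(y_{i})}+\nu-\frac1n\sum_{i=1}^{n}\log\Big(1+\tfrac nm e^{\nu}\tfrac{h_{\theta}(y_{i})}{h_{\psi}(y_{i})}\Big)-\frac1n\sum_{j=1}^{m}\log\Big(1+\tfrac nm e^{\nu}\tfrac{h_{\theta}(x_{j})}{h_{\psi}(x_{j})}\Big),
\]
whose first two terms coincide with those of $\ln(\theta,\nu)$. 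So the task reduces to showing that, as $m\to\infty$, the two remaining sums behave like $0$ and like $e^{\nu}\Zt/\Zp$ respectively.

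The third sum has only $n$ terms, each tending to $\log1=0$ since $n/m\to0$. For the last sum the key step is to rescale it by $m$: setting $\phi_{m}(\theta,x):=m\log\{1+\tfrac nm e^{\nu}h_{\theta}(x)/h_{\psi}(x)\}$, one has $\frac1n\sum_{j=1}^{m}\log(1+\cdots)=\tfrac1n\cdot\tfrac1m\sum_{j=1}^{m}\phi_{m}(\theta,x_{j})$, and I would check that $\phi_{m}(\theta,x)$ \emph{increases} with $m$ (because $\log(1+u)\ge u/(1+u)$) up to $\phi(\theta,x):=ne^{\nu}h_{\theta}(x)/h_{\psi}(x)$, with $0\le\phi_{m}\le\phi$, and that $\theta\mapsto\phi_{m}(\theta,x)$ is lower semicontinuous for $\refdist$-a.e.\ $x$ by part~1 of (H1). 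Since $h_{\theta}/h_{\psi}$ is $\refdist$-integrable with $\mathbb{E}_{\psi}[h_{\theta}/h_{\psi}]=\Zt/\Zp$ (because $\Pt\ll\refdist$ and $\Zt<\infty$), applying the ergodic theorem (X1) to each fixed truncation $\phi_{M}$ and then letting $M\to\infty$ by monotone convergence gives $\tfrac1m\sum_{j=1}^{m}\phi_{m}(\theta,x_{j})\cvm ne^{\nu}\Zt/\Zp$ almost surely; hence $g_{m}(\xi)\cvm\ln(\xi)$ almost surely for each $\xi$.

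It then remains to upgrade this to hypo-convergence of $g_{m}$ to $\ln$, which is where (H1) is used in full. The two conditions to check are (a) $\limsup_{m}g_{m}(\xi_{m})\le\ln(\xi)$ for every $\xi_{m}\to\xi$, and (b) $\liminf_{m}g_{m}(\xi_{m})\ge\ln(\xi)$ for some $\xi_{m}\to\xi$. For (a), part~2 of (H1) (upper semicontinuity in $\theta$ at the $y_{i}$'s and at $\refdist$-a.e.\ $x$) bounds the $\limsup$ of the data term, the $n$-term correction is $\le0$, and the $\limsup$ of the Monte Carlo correction is controlled by the law of large numbers for epigraphical convergence applied to the lower-semicontinuous, monotone integrands $\phi_{m}$ — the Monte Carlo analogue of the integral-epi-convergence lemma of \citet{geyer_1994} — which gives $\liminf_{m}\tfrac1m\sum_{j}\phi_{m}(\theta_{m},x_{j})\ge\mathbb{E}_{\psi}[\phi(\theta,X)]$ along $\theta_{m}\to\theta$. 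Condition (b) follows from the pointwise convergence above (constant sequences $\xi_{m}\equiv\xi$, using $\log(1+u)\le u$) together with the recovery-sequence part of the same result. Being a hypo-limit, $\ln$ is upper semicontinuous, and by the standing assumption that the MLE is unique its maximiser is $\emle$; since by (C1) the $\ence$ eventually lie in a fixed compact set, the standard argmax argument for hypo-convergence closes the proof: along any subsequence with $\ence\to\bar\xi$ one has $\ln(\bar\xi)\ge\limsup_{m}g_{m}(\ence)\ge\liminf_{m}g_{m}(\ence)\ge\ln(\emle)$ (the first inequality by (a), the last by (b) and the fact that $\ence$ is an approximate maximiser), so $\bar\xi$ maximises $\ln$ and hence $\bar\xi=\emle$ by uniqueness; therefore $\ence\cvm\emle$ almost surely.

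\textbf{Main obstacle.} The hard part will be the hypo-convergence of the Monte Carlo term. Its integrand depends on $\theta$ only through the merely-semicontinuous map $h_{\theta}$; it carries the atypical $n/m$ scaling, so that $\frac1n\sum_{j=1}^{m}\log(1+\tfrac nm e^{\nu}h_{\theta}(x_{j})/h_{\psi}(x_{j}))$ is not an ergodic average of a fixed function and must first be rewritten through $\phi_{m}$; and (X1) supplies only a first-moment law of large numbers, so no uniform-in-$\theta$ control is available and the $\limsup$ and $\liminf$ inequalities must be pushed through the epigraphical-convergence machinery rather than through locally uniform convergence. One also has to keep careful track of which exceptional null sets are allowed to depend on $\theta$ (part~1 of (H1), used in the $\liminf$ direction) and which are not (part~2, used in the $\limsup$ direction and for the finitely many $y_{i}$); this bookkeeping, and the asymmetry between the two halves of (H1) that it forces, is the main subtlety of the argument.
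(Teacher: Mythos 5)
Your plan is correct and follows essentially the same route as the paper: rewrite the normalised NCE criterion so that the Monte Carlo term becomes $\frac{1}{m}\sum_j\log(1+\frac{n}{m}e^{\nu}h_{\theta}(X_j)/h_{\psi}(X_j))^{m/n}$, apply a monotone law of large numbers to the non-decreasing sequence $m\mapsto\log(1+a/m)^{m}$, establish hypo-convergence of the criterion to $\ell_n$ in the sense of \citet{geyer_1994}, and conclude by the argmax theorem for hypo-convergence under (C1). The step you defer as the ``main obstacle'' --- the hypo-liminf inequality for the Monte Carlo term along arbitrary sequences $\theta_m\to\theta$, with a single null set serving all $\theta$ --- is exactly where the paper's proof spends its effort (countable base of neighbourhoods, infima over balls replaced by infima over a countable dense subset via part 2 of (H1), then monotone convergence using part 1 of (H1)), and the tools you name for it are the right ones.
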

This result is strongly linked to Theorems 1 and 4 of \citet{geyer_1994},
which state that $\mcmle\rightarrow\mle$ as $m\rightarrow\infty$ under essentially
the same assumptions. These assumptions are very mild: they basically require
continuity of the maps $\theta\mapsto h_{\theta}(x)$, without any integrability
condition. It is noteworthy that Theorem 1 does not require $\Theta$ to be a
subset of $\mathbb{R}^d$: it holds as soon as $\Theta$ is
a separable metric space.

\subsection{Asymptotic normality, comparison with MCMC-MLE\label{subsec:Rate-of-convergence}}

In order to compare the Monte Carlo error of MC-MLE and NCE estimators, we make the following extra assumptions: 
\begin{description}
\item [{(H2)}] The maps $\theta\mapsto h_{\theta}(x)$ are twice continuously
differentiable in a neighborhood of $\mle$ for $\refdist-$almost
every $x$, and for $x=y_{i}$, $i=1,\ldots,n$. The Hessian matrix
 $\mathbf{H}=\nabla^{2}\ell_{n}(\mle)$
is invertible. Moreover, for some $\varepsilon>0$
\[
\int_{\setX}a_{\varepsilon}(x) \underset{\theta\in B(\mle,\varepsilon)}{\text{sup}}h_{\theta}(x) \mu(\dx)<+\infty
\]
where $a_{\varepsilon}(x)=1+\underset{\theta\in B(\mle,\varepsilon)}{\text{sup}}\|\nabla_{\theta}\log h_{\theta}(x)\|^{2}+\underset{\theta\in B(\mle,\varepsilon)}{\text{sup}}\|\nabla_{\theta}^{2}\log h_{\theta}(x)\|$.
\item [{(G1)}] Estimators $\emcmle$ and $\ence$ converge to
$\emle$ almost surely, and are such that 
\[
\nabla\lIS(\emcmle)=o\left(m^{-1}\right),\qquad\nabla\lNCE(\ence)=o\left(m^{-1}\right).
\]
\item [{(I1)}] For some $\varepsilon>0$ the following integrability condition holds:
\[
\mathbb{E}_{\psi}\left[b_{\varepsilon}(X)\underset{\theta\in B(\mle,\varepsilon)}{\text{sup}}\left(\frac{h_{\theta}(X)}{h_{\psi}(X)}\right)^{2}\right]<+\infty\]
where $b_{\varepsilon}(x)=1+\underset{\theta\in B(\widehat{\theta}_n,\varepsilon)}{\text{sup}}\left\Vert \nabla_{\theta}\log h_{\theta}(x)\right\Vert$.
\end{description}
Measurability of the suprema in
(H2) and (I1) is ensured by the lower semi-continuity of the two first
differentials in a neighbourhood of $\mle$. Assumption (H2) is a regularity condition that ensures in particular
that the partition function $\theta\mapsto\mathcal{Z}(\theta)=\int_{\mathcal{X}}h_{\theta}(x)\mu(\dx)$
is twice differentiable under the integral sign, in a neighbourhood
of $\mle$. Following Theorem \ref{thm:nce_consistency}, Assumption (G1) is trivial as soon as Assumptions (C1) and (H1)
hold and $\emcmle$ and $\ence$ are exact maximisers; in that case
the gradients are zero. Integrability Assumption
(I1) is the critical assumption. It is essentially a (locally uniform) second moment condition on the importance weights, with $\mathbb{P}_{\mle}$ as the target distribution.

\begin{thm}
\label{thm:equivalence_n_fixed}Under assumptions (X1), (H2), (G1) and
(I1): 
\begin{equation}
m\left(\ence-\emcmle\right)\cvm n \left(- \mathcal{H}(\emle)\right)^{-1}v(\emle)\quad\as\label{eq:cmp_nce_is}
\end{equation}
where $\mathcal{H}(\xi)=\nabla_{\xi}^{2}\ln(\xi)$, and $v(\xi)$ is defined as follows: let
$g_{\xi}(x)=\log h_{\theta}(x)+\nu$, then
\[
v(\xi)=\frac{1}{n}\sum_{i=1}^{n}\nabla_{\xi}g_{\xi}(y_{i})\left(\frac{\exp\{g_{\xi}(y_{i})\}}{h_{\psi}(y_{i})}\right)-\mathbb{E}_{\psi}\left[\nabla_{\xi}g_{\xi}(X)\left(\frac{\exp\{g_{\xi}(X)\}}{h_{\psi}(X)}\right)^{2}\right].
\]
\end{thm}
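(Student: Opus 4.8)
The plan is to study the first-order conditions satisfied by $\emcmle$ and $\ence$ and to subtract them, using a Taylor expansion around the common limit $\emle$. By assumption (G1), both estimators converge to $\emle$ and satisfy $\nabla\lIS(\emcmle)=\smallo(m^{-1})$ and $\nabla\lNCE(\ence)=\smallo(m^{-1})$. The key observation is that at $\xi=\emle$ we have $\nabla\ln(\emle)=0$, so it is natural to write, for each estimator $\hat\xi\in\{\emcmle,\ence\}$,
\[
0 = \nabla L(\hat\xi) + \smallo(m^{-1}) = \nabla L(\emle) + \nabla^2 L(\bar\xi)(\hat\xi - \emle) + \smallo(m^{-1}),
\]
where $L$ denotes the relevant objective ($\lIS$ or $\lNCE$, suitably normalised so that it has the same first-order scale as $\ln$) and $\bar\xi$ is an intermediate point. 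Subtracting the two identities, the leading term becomes $\nabla\lNCE(\emle) - \nabla\lIS(\emle)$, and this difference is exactly what produces the quantity $v(\emle)/m$ after multiplication by $m$; the Hessians on both sides converge to $\mathcal{H}(\emle) = \nabla^2\ln(\emle)$, which is invertible by (H2). Hence $m(\ence-\emcmle) \to (-\mathcal{H}(\emle))^{-1} \cdot m(\nabla\lNCE(\emle)-\nabla\lIS(\emle)) \to n(-\mathcal{H}(\emle))^{-1} v(\emle)$.

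The main work is therefore to compute $m\,\nabla\lNCE(\emle)$ and $m\,\nabla\lIS(\emle)$ and to identify their difference in the limit. First I would differentiate $\lIS(\xi) = \frac1n\sum_i g_\xi(y_i) - \frac{e^\nu}{m}\sum_j \frac{h_\theta(x_j)}{h_\psi(x_j)} + (\text{const in }\xi)$ — writing things in terms of $g_\xi(x)=\log h_\theta(x)+\nu$ makes the bookkeeping clean since $\nabla_\xi g_\xi(x)=(\nabla_\theta\log h_\theta(x),1)$ — to get $\nabla\lIS(\xi) = \frac1n\sum_i \nabla_\xi g_\xi(y_i) - \frac1m\sum_j \nabla_\xi g_\xi(x_j) e^{g_\xi(x_j)}/h_\psi(x_j)$ (after absorbing $e^\nu$ into $e^{g_\xi}$ correctly and checking the $\nu$-component). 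Then I would differentiate $\lNCE$, using the logistic structure: with $r_\xi(x) = \exp\{g_\xi(x) - \log h_\psi(x)\}\cdot(n/m)$ the odds ratio, $q_{\theta,\nu}(x) = r_\xi(x)/(1+r_\xi(x))$, and $\nabla\lNCE(\xi) = \sum_i (1-q(y_i))\nabla_\xi g_\xi(y_i) - \sum_j q(x_j)\nabla_\xi g_\xi(x_j)$. Evaluating at $\xi=\emle$, each $r_{\emle}(x)$ is $\bigO(n/m)$, so $q \approx r$ and $1-q \approx 1 - r$; expanding $1-q$ and $q$ to first order in $n/m$ and multiplying $\nabla\lNCE(\emle)$ by $m$, the zeroth-order piece reproduces $m\,\nabla\lIS(\emle)$ up to lower order, and the $\bigO(1/m)$ correction term — coming from $q(x_j) = r(x_j) - r(x_j)^2 + \cdots$ on the artificial sample and the analogous correction on the real sample — is precisely $v(\emle)$ after applying (X1) to pass $\frac1m\sum_j(\cdot)$ to its $\mathbb{E}_\psi$ limit; note the square in the second term of $v$ matches the $r^2$ term, and this is where assumption (I1) is needed to justify the LLN for the squared importance weights times $b_\varepsilon$.

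The step I expect to be the genuine obstacle is controlling the Hessian terms $\nabla^2 L(\bar\xi)$ uniformly along the (random) path from $\hat\xi$ to $\emle$, and more delicately showing that the $\smallo(m^{-1})$ slack in (G1) together with the error in the first-order Taylor expansion (the neglected $\bigO(\|\hat\xi-\emle\|^2)$ term) do not contaminate the $m^{-1}$ scale — this requires first establishing the crude rate $\hat\xi - \emle = \bigO(m^{-1/2})$ a.s. for each estimator (so that the quadratic remainder is $\bigO(m^{-1}) = \smallo(m^{-1/2})$, hence negligible at the rate we are tracking, but one must be careful it is genuinely $\smalloP$ of the $m(\ence-\emcmle)$ scale — in fact the difference converges at rate $m^{-1}$, faster, so the argument must be organised so that the $\bigO(m^{-1})$ individual fluctuations cancel in the subtraction, leaving the deterministic limit). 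Concretely I would: (i) use a standard M-estimation / Taylor argument with (H2) and (I1) to get the $m^{-1/2}$ rate and asymptotic normality of each estimator separately (this is essentially Geyer's argument for MC-MLE, adapted to NCE via the expansion above); (ii) then redo the expansion keeping track of terms up to order $m^{-1}$ and observe that the random $\bigO(m^{-1/2})$ and $\bigO(m^{-1})$ fluctuations in $\nabla\lIS(\emle)$ and $\nabla\lNCE(\emle)$ agree to the required order because $\lNCE$ and $\lIS$ differ only through the logistic correction, so their difference is deterministic-plus-$\smallo(m^{-1})$ after centering; (iii) invert the limiting Hessian. The integrability assumptions (H2) and (I1) are exactly what make the dominated-convergence / uniform-LLN steps in (i) and (ii) go through.
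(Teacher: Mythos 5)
Your identification of where the limit $v(\emle)$ comes from (the first-order expansion of the logistic weights, $q=r-r^{2}+\cdots$, which produces the squared importance weight in the second term of $v$) is correct and matches the computation in the paper. But the overall strategy --- expanding $\nabla\lIS$ and $\nabla\lNCE$ separately around $\emle$ and then subtracting --- has a genuine gap that the paper's proof is specifically organised to avoid. First, your argument hinges on the crude rate $\hat{\xi}-\emle=\bigO(m^{-1/2})$, which is \emph{not} available under the hypotheses of the theorem: (X1) only provides a law of large numbers for the ergodic process, and the CLT needed for an $m^{-1/2}$ rate is the separate assumption (N), which is not among the hypotheses of Theorem \ref{thm:equivalence_n_fixed} (and would in any case yield a rate in probability, not the almost-sure statement claimed). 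Second, even granting that rate, subtracting the two expansions leaves cross terms such as $\bigl\{[\nabla^{2}\lNCE(\bar{\xi}^{\mathrm{NCE}})]^{-1}-[\nabla^{2}\lIS(\bar{\xi}^{\mathrm{IS}})]^{-1}\bigr\}\nabla\lIS(\emle)$, a product of a factor that is only known to be $\smallo(1)$ (the empirical Hessians converge with no rate under a bare LLN, and the two intermediate points are only $\bigO(m^{-1/2})$ apart) with a factor that is $\bigO(m^{-1/2})$; this term is therefore only $\smallo(m^{-1/2})$, which swamps the $\bigO(m^{-1})$ quantity you are trying to identify. You acknowledge this difficulty in step (ii) but do not supply the mechanism by which the individual fluctuations cancel.

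The paper's proof sidesteps both issues by applying the Taylor--Lagrange expansion to the single function $\glIS$ \emph{between the two estimators}: $\glIS(\emcmle)=\glIS(\ence)+\mathbf{H}_{m}^{\mathrm{IS}}(\emcmle-\ence)$. The left-hand side is $\smallo(m^{-1})$ by (G1), and $\glIS(\ence)=\{\glIS(\ence)-\glNCE(\ence)\}+\smallo(m^{-1})$, so the quantity driving the limit is the difference of the two gradient functions evaluated at the \emph{same} random point $\ence$. The $\bigO(m^{-1/2})$ Monte Carlo fluctuations thus cancel identically rather than approximately, and what remains is $(n/m)$ times empirical averages that converge almost surely by the uniform LLN (Lemma \ref{lem:ULLN_theorem}) and the dominated LLN (Lemma \ref{lem:LLN_dominated}) under (I1); the intermediate Hessian $\mathbf{H}_{m}^{\mathrm{IS}}$ only needs to converge to $\mathcal{H}(\emle)$, with no rate, which again follows from the uniform LLN. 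If you want to repair your argument, this change of expansion point is the missing idea; no CLT, no rate for $\hat{\xi}-\emle$, and no third derivatives are then required.
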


Before discussing the implications of Theorem 2, it is important to consider \citet{geyer_1994}'s result about asymptotic normality of MC-MLE, which relies on the following assumption:

\begin{description}
\item [{(N)}]For some covariance matrix $\mathbf{A}$ we have:
\[
\sqrt{m}\nabla\lIS(\mle)\cvdm \mathcal{N}_d\left(\mathbf{0}_d,\mathbf{A}\right)
\]
\end{description}

As noticed by \citet{geyer_1994}, asymptotics of MC-MLE are quite similar to
the asymptotics of maximum likelihood, and it can be shown that under
assumptions (X1), (H2), (G1) and (N), 
\[
\sqrt{m}\left(\mcmle-\mle\right)\cvdm \mathcal{N}_d\left(\mathbf{0}_d,\mathbf{H}^{-1}\mathbf{A}\mathbf{H}^{-1}\right).
\]

Theorem 2 shows that the difference between the two point estimates
is $\bigO(m^{-1})$, which is negligible relative to the $\bigOP(m^{-1/2})$
rate of convergence to $\mle$. This proves that, when $n$ is fixed, both approaches are asymptotically equivalent when it comes to approximate the MLE. In particular, Slutsky's lemma implies asymptotic normality of the NCE estimator with the same asymptotic variance as for MC-MLE.

Assumptions (H2) and (I1) admit a much simpler formulation when the model belongs to an exponential family. This is the point of the following Proposition.

\begin{prop}

If the parametric model is exponential, i.e. if $h_{\theta}(x)=\exp\big\{\theta^TS(x)\big\}$ for some statistic $S$, then assumptions (H2) and (I1) are equivalent to the following assumptions (H2-exp) and (I1-exp):

\begin{description}
\item [{(H2-exp)}] The Hessian matrix of the log-likelihood $\mathbf{H}=\nabla^{2}\ln(\mle)$
is invertible. 
\item [{(I1-exp)}] The MLE $\mle$ lies in the interior of $\Theta_{\psi}=\left\{\theta\in\Theta:\mathbb{E}_{\psi}\left[\big(\frac{h_{\theta}(X)}{h_{\psi}(X)}\big)^{2}\right]<+\infty\right\}$. 
\end{description}

\end{prop}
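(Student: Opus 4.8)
The plan is to specialize the two general conditions (H2) and (I1) to the exponential case $h_\theta(x)=\exp\{\theta^TS(x)\}$ and check that the surviving content is exactly (H2-exp) and (I1-exp). First I would compute the relevant derivatives: $\nabla_\theta\log h_\theta(x)=S(x)$ and $\nabla_\theta^2\log h_\theta(x)=\mathbf{0}$. Hence $\theta\mapsto h_\theta(x)$ is automatically $C^\infty$ (so the smoothness clauses in (H2) hold trivially), the second-differential term in $a_\varepsilon$ vanishes, and $a_\varepsilon(x)=1+\sup_{\theta\in B(\mle,\varepsilon)}\|S(x)\|^2$, while $b_\varepsilon(x)=1+\sup_{\theta\in B(\mle,\varepsilon)}\|S(x)\|=1+\|S(x)\|$. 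Thus the only nontrivial part of (H2) is invertibility of $\mathbf H$, which is precisely (H2-exp), provided I also show that the integrability clause $\int a_\varepsilon(x)\sup_\theta h_\theta(x)\,\mu(\dx)<\infty$ in (H2) is implied by (the exponential form of) (I1).

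For the integrability bookkeeping I would argue as follows. On a ball $B(\mle,\varepsilon)$ one has $\sup_{\theta}h_\theta(x)=\exp\{\sup_\theta\theta^TS(x)\}\le \exp\{(\mle)^TS(x)\}\exp\{\varepsilon\|S(x)\|\}$, which for $\|S(x)\|$ large is dominated by $h_{\mle+\varepsilon'}(x)$-type terms; more cleanly, $\sup_\theta h_\theta(x)\le \sum_{k}h_{\theta_k}(x)$ for a finite set of corner/extreme points $\theta_k$ of a small cube containing the ball (since $\theta\mapsto\theta^TS(x)$ is linear, its sup over a polytope is attained at a vertex). Combining with the polynomial factors $1+\|S(x)\|+\|S(x)\|^2$, the finiteness of $\int a_\varepsilon(x)\sup_\theta h_\theta(x)\,\mu(\dx)$ and of $\mathbb E_\psi[b_\varepsilon(X)\sup_\theta(h_\theta(X)/h_\psi(X))^2]$ both reduce to: for $\theta$ in a neighborhood of $\mle$, $\int (1+\|S(x)\|^2)h_\theta(x)^2/h_\psi(x)\,\mu(\dx)<\infty$, equivalently $\mathbb E_\psi[(1+\|S(X)\|^2)(h_\theta(X)/h_\psi(X))^2]<\infty$ for $\theta$ near $\mle$. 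The polynomial moment $(1+\|S(X)\|^2)$ can be absorbed into a slightly larger exponential weight: for any $\theta$ and any $\delta>0$ there is $C_\delta$ with $(1+\|S(x)\|^2)\le C_\delta\exp\{\delta\|S(x)\|\}\le C_\delta\big(\exp\{2\delta e_1^TS(x)\}+\cdots\big)$ summing over signed coordinate directions, so $(1+\|S(X)\|^2)(h_\theta/h_\psi)^2\le C_\delta\sum_\pm (h_{\theta\pm\delta e_j}/h_\psi)^2$. Therefore the integrability in (H2) and (I1) holds for $\theta$ in a neighborhood of $\mle$ if and only if $\mathbb E_\psi[(h_\theta(X)/h_\psi(X))^2]<\infty$ for $\theta$ in a (possibly smaller) neighborhood of $\mle$, i.e. $\mle\in\mathrm{int}\,\Theta_\psi$, which is (I1-exp). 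The convexity of $\Theta_\psi$ (immediate since $\theta\mapsto\mathbb E_\psi[\exp\{2\theta^TS(X)\}]$ is a convex function, being an integral of log-convex functions — or directly by Hölder) is what makes ``interior point'' the clean way to phrase ``holds on a neighborhood''.

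Finally I would assemble the equivalence in both directions. (H2)$\wedge$(I1)$\Rightarrow$(H2-exp)$\wedge$(I1-exp) is the easy direction: invertibility of $\mathbf H$ is retained verbatim, and the (I1) integrability at $\mle$ together with its local uniformity forces $\mle$ into the interior of $\Theta_\psi$ by the argument above. For the converse, (H2-exp) gives invertibility of $\mathbf H$; (I1-exp) places $\mle$ in $\mathrm{int}\,\Theta_\psi$, so there is $\varepsilon>0$ with $B(\mle,2\varepsilon)\subset\Theta_\psi$, and the vertex/ Hölder bounds then yield finiteness of both displayed integrals with that $\varepsilon$; smoothness of $\theta\mapsto h_\theta(x)$ is free. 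One small point to handle carefully is that (H2) and (I1) are stated at the (data-dependent) MLE $\mle$ and also involve the finite set of points $y_1,\dots,y_n$ — but for exponential families each $h_\theta(y_i)=\exp\{\theta^TS(y_i)\}$ is finite and smooth in $\theta$, so the ``$x=y_i$'' clauses are automatic and contribute nothing. The main obstacle, such as it is, is purely the integrability translation: making precise that the polynomial prefactors $a_\varepsilon,b_\varepsilon$ and the supremum-over-a-ball operations do not change which $\theta$'s give finite second moments of the importance weights — everything else is immediate from linearity of $\theta\mapsto\theta^TS(x)$.
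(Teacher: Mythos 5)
Your proposal is correct and follows essentially the same route as the paper's proof: exploit linearity of $\theta\mapsto\theta^TS(x)$ to replace the supremum over a ball by a finite sum over corner points, absorb the polynomial factors $1+\|S(x)\|+\|S(x)\|^2$ into slightly enlarged exponential tilts, and conclude that both integrability clauses hold on a neighbourhood of $\mle$ iff $\mle$ lies in the interior of $\Theta_{\psi}$ (convexity making the neighbourhood formulation clean). The only point worth making explicit is the one-line Cauchy--Schwarz step showing that finiteness of $\mathbb{E}_{\psi}[(h_{\theta'}/h_{\psi})^2]$ implies finiteness of $\mathcal{Z}(\theta')$, so that (I1-exp) really does deliver the (H2) integral and not just the (I1) one.
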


The set $\Theta_{\psi}$ is convex whenever $\Theta$ is. In particular, this is
true when $\Theta$ coincides with the natural space of parameters, defined as
$\widetilde{\Theta}=\{\theta\in\R^d:\int_{\setX}\exp\big\{\theta^TS(x)\big\}\mu(\dx)<+\infty\}$.
If $\refdist\in\left\{\Pt:\theta\in\Theta\right\}$, then (I1-exp) holds as soon
as
$2\mle - \psi$ lies in the interior of $\widetilde{\Theta}$.

\begin{rem}
Condition (N) requires a $\sqrt{m}$-CLT (central limit theorem) for the function $\varphi:x\mapsto\big(\nabla_{\theta}\log h_{\theta}\big)(h_{\theta}/h_{\psi})(x)$ at $\theta=\mle$. There has been an extensive literature on CLT's for Markov Chains, see e.g. \citet{gareth_rosenthal} for a review. In particular, if $(X_j)_{j\ge1}$ is a geometrically ergodic Markov Chain with stationary distribution $\mathbb{P}_{\psi}$, then assumption (N) holds if for some $\delta>0$, $\varphi\in\mathbb{L}_{2+\delta}(\refdist)$. This assumption is very similar to assumption (I1), especially when the model is exponential.
\end{rem}

In practice, implications of Theorem 2 must be considered cautiously, as the
norm of the limit in \eqref{eq:cmp_nce_is} will typically increase with $n$.
For several well-known un-normalised models (e.g. Ising models, Exponential
Random Graph Models), $n$ is equal to one, in which case NCE and MC-MLE will
always produce very close estimates. For other models however, it is known that
the two estimators may behave differently, especially when the number of actual
data-points is big and when simulations have a high computational cost (see
\citet{nce_2012}).

To investigate to which extent both approaches provide a good approximation of
the true parameter value in these models, we will require both $m$ and $n$
to go to infinity. As it turns out, this will also make it possible
to do finer comparison between $\emcmle$ and $\ence$ (and thus between
$\nce$ and $\mcmle$). This is the point of the next section.

\section{Asymptotics of the overall error\label{sec:both_n_and_m}}

We now assume that observations $y_{i}$ are realisations of IID random
variables $Y_{i}$, with
probability density $f_{\trueth}$, for some true parameter
$\theta^{\star}\in\Theta$, while the artificial data-points  $(X_j)_{j\ge1}$
remain generated from a $\refdist$-ergodic process. We also assume that
$(Y_{i})_{i\ge1}$ and $(X_j)_{j\ge1}$ are independent sequences; this regime
was first studied for NCE by \citet{nce_2012}, although the $X_j$'s were
assumed IID in that paper. 

This asymptotic regime has some drawbacks: it assumes that the model is well
specified, and that $\refdist$ is chosen independently from the data, which is
rarely true in practice. Nevertheless, allowing both $m$ and $n$ to go to
infinity turns out to provide a better understanding of the asymptotic
behaviours of NCE and MC-MLE, at least for situations where the number of
actual data-points may be large.

We assume implicitly that $m=m_n$ is a non-decreasing sequence of positive
integers going to infinity when $n$ does, while
$m_n/n\rightarrow\tau\in(0,+\infty)$. Every limit when $n$ goes to infinity
should be understood accordingly. Finally,
$\xi^{\star}=(\theta^{\star},\nu^{\star})$ stands for the true extended
parameter, where
$\nu^{\star}=\log\left\{\mathcal{Z}(\psi)/\mathcal{Z}(\theta^{\star})\right\}$.
 
\subsection{Consistency}

Our results concerning the overall consistency (to $\truee$, as both $m$
and $n\rightarrow\infty$) of MC-MLE and NCE
rely on the following assumptions: 
\begin{description}
\item [{(C2)}] The random sequences $\big(\emcmle\big)_{n\geq1}$ and $\big(\ence\big)_{n\geq1}$ are approximate MC-MLE and NCE estimators, and belong to a compact set almost surely.
\item [{(H3)}] The maps $\theta\mapsto h_{\theta}(x)$ are continuous for
$\refdist$-almost every $x$, and for any $\theta\in\Theta$ there is some $\varepsilon>0$ such that \[ \int_{\setX}\underset{\phi\in B(\theta,\varepsilon)}{\sup}\left(\log\frac{h_{\phi}(x)}{h_{\theta^{\star}}(x)}\right)_{+}h_{\theta^{\star}}(x)\mu(\dx)<+\infty.\]
\end{description}

\begin{thm}
Under assumptions (X1), (C2) and (H3), both estimators $\emcmle$ and $\ence$ converge almost surely to $\truee$
as $n,m\rightarrow\infty$, while $m/n\rightarrow\tau$. \label{thm:consistency}
\end{thm}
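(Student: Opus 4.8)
The plan is to prove Theorem~\ref{thm:consistency} by a standard M-estimation argument, treating $\emcmle$ and $\ence$ as maximisers of random objective functions that converge (locally uniformly, almost surely) to a deterministic limit function uniquely maximised at $\truee$. First I would identify the limiting criterion. For MC-MLE, write $\lIS(\xi)$ as in \eqref{eq:lIS_extended}; under (X1) and the law of large numbers for the $Y_i$'s, $\lIS(\theta,\nu)\to \ell(\theta,\nu) := \mathbb{E}_{\trueth}\!\left[\log\frac{h_\theta(Y)}{h_\psi(Y)}\right] + \nu - e^{\nu}\frac{\Zt}{\Zp}$ almost surely for each fixed $\xi$. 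The function $\ell$ is exactly the (population) Poisson-transform log-likelihood; since $(\theta,\nu)\mapsto \ell$ is strictly concave in $\nu$ for fixed $\theta$ and, after profiling out $\nu$, reduces to $\mathbb{E}_{\trueth}[\log f_\theta(Y)] - \mathbb{E}_{\trueth}[\log f_\psi(Y)]$ up to a constant, identifiability of the model forces $\truee$ to be its unique maximiser. For NCE one divides $\lNCE$ by $n$; using $m/n\to\tau$ and the two laws of large numbers, $\frac1n\lNCE(\theta,\nu)$ converges almost surely to a limit $\ell^{\mathrm{NCE}}_\tau(\theta,\nu)$ which is the expected log-likelihood of the population logistic classification problem, and the Bayes-optimal classifier identifies $\truee$ as its unique maximiser (this is essentially the population-level consistency argument of \citet{nce_2012}, now with ergodic rather than IID artificial data).

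Second I would upgrade pointwise convergence to locally uniform convergence of the criteria, which is where assumption (H3) does the work. The only delicate term is $\frac1n\sum_i \log\{h_\theta(y_i)/h_\psi(y_i)\}$ (and, for NCE, the analogous terms inside the logistic losses): continuity of $\theta\mapsto h_\theta(x)$ for $\refdist$-a.e.\ $x$ plus the local envelope integrability in (H3) — an integrable upper bound on $\sup_{\phi\in B(\theta,\varepsilon)}(\log h_\phi/h_{\trueth})_+$ against $h_{\trueth}\mu$ — gives, via a Glivenko--Cantelli / bracketing argument (or the classical uniform SLLN over compacta, e.g.\ Wald-type), uniform a.s.\ convergence on a neighbourhood of each $\theta$, hence on any compact set by a covering argument. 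The artificial-data terms $\frac{e^\nu}{m}\sum_j h_\theta(x_j)/h_\psi(x_j)$ are handled the same way using (X1); note that the $(\cdot)_+$ truncation in (H3) is enough because the objective only needs an upper envelope (the maximiser cannot escape to $+\infty$), and the compactness assumption in (C2) confines $\nu$ as well so the $e^\nu$ factor is harmless.

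Third, the conclusion follows from the usual argmax argument. Let $K$ be the a.s.\ compact set containing $(\emcmle)_n$ from (C2). On $K$, $\lIS$ converges uniformly a.s.\ to $\ell$; by the approximate-maximiser property \eqref{eq:def_approx_max}, $\ell(\emcmle) \ge \sup_{\xi\in K}\ell(\xi) - o(1)$ a.s. Since $\truee$ is the unique maximiser of $\ell$ and $\ell$ is continuous, for any neighbourhood $U$ of $\truee$ we have $\sup_{\xi\in K\setminus U}\ell(\xi) < \ell(\truee)$; combined with the previous display this forces $\emcmle\in U$ eventually, i.e.\ $\emcmle\to\truee$ a.s. The identical argument with $\ell^{\mathrm{NCE}}_\tau$ gives $\ence\to\truee$ a.s. One subtlety to address cleanly: the compact set in (C2) is random, so one should phrase the uniform convergence as holding a.s.\ on every fixed compact set and then note that $K$, being a.s.\ bounded, is a.s.\ contained in some fixed compact ball, on which the uniform convergence applies.

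The main obstacle is the uniform convergence step under the weak regularity hypotheses. Assumption (H3) only supplies a one-sided (upper) integrable envelope and only continuity $\refdist$-a.e., so the argument must avoid requiring a two-sided dominating function or global continuity; the trick is that maximisation only needs the objective to be \emph{upper} semicontinuous in the limit and to not blow up, so one controls $\limsup_n \sup_{B(\theta,\varepsilon)}\lIS$ from above by $\mathbb{E}_{\trueth}[\sup_{B(\theta,\varepsilon)}\log(h_\phi/h_\psi)]$ (finite by (H3)) and lets $\varepsilon\downarrow 0$ with dominated/monotone convergence to recover $\ell(\theta)$, exactly as in the proof of Wald's consistency theorem. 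Propagating this through the nonlinear logistic loss in $\lNCE$, and checking that the $\nu$-direction and the ergodic artificial-data averages cause no extra trouble, is the bulk of the technical work; everything else is the routine argmax machinery sketched above.
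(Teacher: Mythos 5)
Your proposal is correct and follows essentially the same route as the paper: a Wald-type argument in which one centres the objectives, identifies $\truee$ as the unique maximiser of the population criterion, controls $\limsup_n \sup_{B(\xi,\varepsilon)}$ from above using the one-sided envelope in (H3) together with monotone convergence as $\varepsilon\downarrow 0$, extracts a finite subcover of the compact set from (C2), and concludes with the standard argmax step. The technical points you flag as "the bulk of the work" (the $\nu$-direction, the fact that $m_n/n$ only converges to $\tau$ rather than equals it, and the possibility that the negative part of the limiting integrand has infinite expectation) are exactly the ones the paper resolves via a recentred objective, a uniform $|\log(m_n/n)-\log\tau|$ bound, and ergodic LLN lemmas for dominated or monotone sequences of test functions.
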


Our proofs of NCE and MC-MLE consistency are mainly inspired from
\citet{wald_1949}'s famous proof of MLE consistency, for which the same
integrability condition (H3) is required. It is noteworthy that, under this
regime, MC-MLE and NCE consistency essentially rely on the same assumptions as
MLE consistency. It should be noted that Theorem 3 does not require $\Theta$ to
be a subset of $\mathbb{R}^d$: it holds whenever $\Theta$ is a metric space. 

\begin{prop}
If the parametric model is exponential, i.e. if $h_{\theta}(x)=\exp\big\{\theta^TS(x)\big\}$ for some measurable statistic $S$, then assumption (H3) always holds.
\end{prop}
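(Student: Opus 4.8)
The plan is to verify the two ingredients of (H3) separately: the continuity requirement, which is trivial in the exponential case, and the integrability requirement, which I would reduce to the classical fact that the sufficient statistic of an exponential family is integrable (indeed has moments of all orders) against the un-normalised density at an interior parameter value. First, since $S:\setX\to\R^{d}$ is finite-valued and measurable, the map $\theta\mapsto h_{\theta}(x)=\exp\{\theta^{T}S(x)\}$ is continuous (in fact $C^{\infty}$) on $\Theta$ for \emph{every} $x\in\setX$, so the continuity part of (H3) holds with no exceptional null set. For the integrability part, fix $\theta\in\Theta$ and take any $\varepsilon>0$. In the exponential case $\log\{h_{\phi}(x)/h_{\trueth}(x)\}=(\phi-\trueth)^{T}S(x)$, so by Cauchy--Schwarz, for all $\phi\in B(\theta,\varepsilon)$ and all $x$,
\[
\big((\phi-\trueth)^{T}S(x)\big)_{+}\le\|\phi-\trueth\|\,\|S(x)\|\le C\,\|S(x)\|,\qquad C:=\varepsilon+\|\theta-\trueth\|,
\]
whence $\sup_{\phi\in B(\theta,\varepsilon)}\big((\phi-\trueth)^{T}S(x)\big)_{+}\,h_{\trueth}(x)\le C\,\|S(x)\|\,e^{(\trueth)^{T}S(x)}$. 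It therefore suffices to prove that $\int_{\setX}\|S(x)\|\,e^{(\trueth)^{T}S(x)}\,\mu(\dx)<+\infty$.

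To establish this, I would use that $\Theta$ is open (the standing assumption of Section \ref{sec:Set-up-and-notations}) and $\trueth\in\Theta$: there is $a>0$ such that $\trueth\pm a e_{k}\in\Theta$ for each vector $e_{1},\dots,e_{d}$ of the canonical basis of $\R^{d}$, and for such points $\Z(\trueth\pm a e_{k})=\int_{\setX}e^{(\trueth\pm a e_{k})^{T}S(x)}\,\mu(\dx)<+\infty$ since every $\Z(\phi)$, $\phi\in\Theta$, is finite by construction of the model. Combining $\|S(x)\|\le\sum_{k=1}^{d}|S_{k}(x)|$ with the elementary inequality $|t|\le a^{-1}(e^{at}+e^{-at})$, valid for all $t\in\R$, gives for every $x$
\[
\|S(x)\|\,e^{(\trueth)^{T}S(x)}\le\frac{1}{a}\sum_{k=1}^{d}\Big(e^{(\trueth+a e_{k})^{T}S(x)}+e^{(\trueth-a e_{k})^{T}S(x)}\Big),
\]
and integrating the right-hand side yields the finite bound $\frac{1}{a}\sum_{k=1}^{d}\big(\Z(\trueth+a e_{k})+\Z(\trueth-a e_{k})\big)$. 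This proves the required integrability and hence (H3).

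I do not expect a genuine obstacle: the core of the argument is the textbook fact that the Laplace transform of an exponential family is finite on the interior of its natural parameter domain, so that $S$ has finite expectation (and all higher moments) under $f_{\trueth}\,\mu$. The only points worth stating with care are that the supremum over $B(\theta,\varepsilon)$ collapses to a constant multiple of $\|S(x)\|$, so the radius $\varepsilon$ is immaterial and (H3) in fact holds at \emph{every} $\theta\in\Theta$, and that $\trueth$ is automatically interior to $\Theta$ because $\Theta$ was assumed open — it is precisely this openness that supplies the auxiliary parameter values $\trueth\pm a e_{k}$ used to dominate $\|S\|\,h_{\trueth}$.
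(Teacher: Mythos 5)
Your proof is correct and takes essentially the same route as the paper's: both arguments bound $\sup_{\phi\in B(\theta,\varepsilon)}\big(\log\{h_{\phi}(x)/h_{\theta^{\star}}(x)\}\big)_{+}$ by $(\|\theta-\theta^{\star}\|+\varepsilon)\,\|S(x)\|$ and thereby reduce (H3) to the finiteness of $\int_{\setX}\|S(x)\|\,h_{\theta^{\star}}(x)\,\mu(\dx)$. The only difference is that you explicitly justify this last integrability (via the openness of $\Theta$ and the bound $|t|\le a^{-1}(e^{at}+e^{-at})$), a classical fact about exponential families that the paper's proof asserts without detail.
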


\subsection{Asymptotic normality }

To ensure the asymptotic normality of both NCE and MC-MLE estimates, we make the following assumption.
\begin{description}
\item[{(X2)}] The sequence $(X_j)_{j\ge1}$ is a Harris ergodic Markov chain (that is, aperiodic, $\phi$-irreducible and positive
Harris recurrent; for definitions see \citet{meyn_tweedie}), with stationary distribution $\refdist$. 

The Markov kernel associated with the chain $(X_j)_{j\ge1}$, noted $P(x,\dy)$, is reversible (satisfies detailed balance) with respect to $\refdist$, that is
\begin{equation}\label{eq:detailed_balance}
\refdist(\dx)P(x,\dy)=\refdist(\dy)P(y,\dx).
\end{equation}
Moreover, the chain $(X_j)_{j\ge1}$ is \textit{geometrically ergodic}, i.e. there is some $\rho\in(0,1)$ and a positive measurable function $M$ such that for $\refdist$-almost every $x$
\begin{equation}
\|P^n(x, .) - \refdist(.)\|_{\mathrm{TV}} \le M(x)\rho^n\label{eq:geometric_ergodicity}
\end{equation}
where $P^n(x; dy)$ denote the $n$-step Markov transition
kernel corresponding to $P$, and $\|.\|_{\mathrm{TV}}$ stands for the total variation norm.
\end{description}
 Under (X2), for any measurable, real-valued function $\varphi$ such that $\mathbb{E}_{\psi}[\varphi^2]<\infty$, then a $\sqrt{m}$-CLT holds, i.e.
\begin{equation}\sqrt{m}\left(\frac{1}{m}\sum_{j=1}^m\varphi(X_j)-\mathbb{E}_{\psi}\left[\varphi(X)\right]\right)\cvd\mathcal{N}(0,\sigma_{\varphi}^2)\label{eq:clt}\end{equation}
where
$$\sigma_{\varphi}^2=\mathbb{V}_{\psi}(\varphi(X))+2\sum_{i=1}^{\infty}\mathrm{Cov}(\varphi(X_0),\varphi(X_i)).$$
 In the equation above, $\mathrm{Cov}(\varphi(X_0),\varphi(X_i))$ stands for the $i$-th lag autocovariance of the chain at stationarity; that is with respect to the distribution defined by $X_0\sim\refdist$ and $X_{i+1}|X_i\sim P(X_i,.)$. The sequence of artificial data-points $(X_j)_{j\ge1}$ is not assumed stationary. Since the chain is Harris recurrent, \eqref{eq:clt} holds whenever $X_1=x$ for any $x\in\mathcal{X}$ (see e.g. \citet{gareth_rosenthal}, especially Theorem 4 and Proposition 29).
 
 For convenience, we choose to assume that the kernel is reversible (which is true for any Metropolis-Hastings algorithm), but the reversibility assumption \eqref{eq:detailed_balance} is not compulsory, and may be replaced by slightly stronger integrability assumptions (see e.g. \citet{gareth_rosenthal}); in particular, if reversibility is not assumed then \eqref{eq:clt} holds whenever $\varphi\in\mathbb{L}_{2+\delta}(\mathbb{P}_{\psi)}$. The critical assumption is geometric ergodicity.
 
Geometric ergodicity is obviously stronger than assumption (X1) which only
requires a law of large numbers to hold. Nevertheless, geometric ergodicity
remains a state of the art condition to ensure CLT's for Markov chains (see
e.g. \citet{gareth_rosenthal} and \citet{bradley_2005}), while it can often be
checked for practical MCMC samplers. We thus present assumption (X2) as a sharp
and practical condition for ensuring CLT's when the artificial data-points are
generated from a MCMC sampler, while it also covers the IID case without loss
of generality.

Our asymptotic normality results rely on the following assumptions:
\begin{description}
\item [{(H4)}] The maps $\theta\mapsto h_{\theta}(x)$ are twice continuously differentiable in a neighborhood of $\theta^{\star}$ for $\refdist$-almost every $x$; the Fisher Information  $\mathbf{I}(\theta)=\mathbb{V}_{\theta}\big(\nabla_{\theta}\log h_{\theta}(Y)\big)$ is invertible at $\theta=\theta^{\star}$; and for some $\varepsilon>0$
\[
\int_{\setX}c_{\varepsilon}(x) \underset{\theta\in B(\trueth,\varepsilon)}{\text{sup}}h_{\theta}(x) \mu(\dx)<\infty
\]
where $c_{\varepsilon}(x)=1+\underset{\theta\in B(\trueth,\varepsilon)}{\text{sup}}\|\nabla_{\theta}\log h_{\theta}(x)\|^{2}+\underset{\theta\in B(\trueth,\varepsilon)}{\text{sup}}\|\nabla_{\theta}^{2}\log h_{\theta}(x)\|$.
\item [{(G2)}] Estimators $\emcmle$ and $\ence$ converge in probability to
$\truee$, and are such that 
\[
\nabla\lIS(\emcmle)=\smalloP\left(n^{-1/2}\right),\qquad\nabla\lNCE(\ence)=\smalloP\left(n^{-1/2}\right).
\]
\item [{(I3)}] At $\theta=\theta^{\star}$, the following integrability condition holds:
\[
\mathbb{E}_{\psi}\left[d_{\theta}(X)\left(\frac{h_{\theta}(X)}{h_{\psi}(X)}\right)^{2}\right]<\infty\]
where $d_{\theta}(x)=1+\left\Vert \nabla_{\theta}\log h_{\theta}(x)\right\Vert^2$.
\end{description}

\begin{thm}
Under assumptions (X2), (H4)  and (G2), we have 
\[
\sqrt{n}\left(\ence-\truee\right)\cvd  \mathcal{N}_{d+1}\left(0,\mathbf{V}_{\tau}^{\mathrm{NCE}}(\truee)\right)\]
where
\begin{align*}
\mathbf{V}_{\tau}^{\mathrm{NCE}}(\xi)&=\mathbf{J}_{\tau}(\xi)^{-1}\left\{\boldsymbol{\boldsymbol{\Sigma}}_{\tau}(\xi)+\tau^{-1}\boldsymbol{\boldsymbol{\Gamma}}_{\tau}(\xi)\right\}\mathbf{J}_{\tau}(\xi)^{-1},\\
\mathbf{J}_{\tau}(\xi)&=\mathbb{E}_{\theta}\bigg[(\nabla_{\xi}\nabla_{\xi}^Tg_{\xi})\bigg(\frac{\tau f_{\psi}}{\tau f_{\psi}+f_{\theta}}\bigg)(Y)\bigg],\\
\boldsymbol{\boldsymbol{\Sigma}}_{\tau}(\xi)&=\mathbb{V}_{\theta}\bigg((\nabla_{\xi}g_{\xi})\bigg(\frac{\tau f_{\psi}}{\tau f_{\psi}+f_{\theta}}\bigg)(Y)\bigg),\\
\boldsymbol{\boldsymbol{\Gamma}}_{\tau}(\xi)&=\mathbb{V}_{\psi}\left(\varphi_{\xi}^{\mathrm{NCE}}(X)\right)+2\sum_{i=1}^{+\infty}\mathrm{Cov}\Big(\varphi_{\xi}^{\mathrm{NCE}}(X_0),\varphi_{\xi}^{\mathrm{NCE}}(X_i)\Big),\\
\varphi_{\xi}^{\mathrm{NCE}}(x)&=(\nabla_{\xi}g_{\xi})\frac{f_{\theta}}{f_{\psi}}\bigg(\frac{\tau f_{\psi}}{\tau f_{\psi}+f_{\theta}}\bigg)(x).
\end{align*}

Moreover, under assumptions (X2), (H4), (G2) and (I3), we have
\[
\sqrt{n}\left(\emcmle-\truee\right)\cvd  \mathcal{N}_{d+1}\left(0,\mathbf{V}_{\tau}^{\mathrm{IS}}(\truee)\right)\]
where \begin{align*}
\mathbf{V}_{\tau}^{\mathrm{IS}}(\xi)&=\mathbf{J}(\xi)^{-1}\left\{\boldsymbol{\Sigma}(\xi)+\tau^{-1}\boldsymbol{\Gamma}(\xi)\right\}\mathbf{J}(\xi)^{-1},\\
\mathbf{J}(\xi)&=\mathbb{E}_{\theta}\left[\nabla_{\xi}\nabla_{\xi}^Tg_{\xi}(Y)\right],\\
\boldsymbol{\Sigma}(\xi)&=\mathbb{V}_{\theta}\Big(\nabla_{\xi}g_{\xi}(Y)\Big),\\
\boldsymbol{\boldsymbol{\Gamma}}(\xi)&=\mathbb{V}_{\psi}\left(\varphi_{\xi}^{\mathrm{IS}}(X)\right)+2\sum_{i=1}^{+\infty}\mathrm{Cov}\Big(\varphi_{\xi}^{\mathrm{IS}}(X_0),\varphi_{\xi}^{\mathrm{IS}}(X_i)\Big),\\
\varphi_{\xi}^{\mathrm{IS}}(x)&=(\nabla_{\xi}g_{\xi})\frac{f_{\theta}}{f_{\psi}}(x).
\end{align*}\label{thm:asymptotic_normality}
\end{thm}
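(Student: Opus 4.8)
The plan is to treat both estimators within a single M-estimation framework, since after the Poisson transform the objective functions $\lIS$ and $\lNCE$ are sums of terms indexed by the $y_i$'s and the $X_j$'s, and both are smooth in $\xi$. I would first establish the standard Taylor expansion of the estimating equation: writing $\bar{\xi}_n$ for either $\emcmle$ or $\ence$, and using (G2) together with consistency (Theorem \ref{thm:consistency}), expand $0 \approx \nabla \ell_{n,m}(\bar\xi_n) = \nabla \ell_{n,m}(\truee) + \nabla^2 \ell_{n,m}(\tilde\xi_n)(\bar\xi_n - \truee)$ for some $\tilde\xi_n$ between $\bar\xi_n$ and $\truee$. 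Assumption (H4), via dominated convergence and a locally uniform law of large numbers (the envelope $c_\varepsilon$ controls the second derivatives, and the weight $\tau f_\psi/(\tau f_\psi + f_\theta) \le 1$ is bounded for the NCE case), gives $\nabla^2 \ell_{n,m}(\tilde\xi_n) \to \mathbf{J}(\truee)$ or $\mathbf{J}_\tau(\truee)$ a.s.; invertibility of this limit follows from invertibility of the Fisher information $\mathbf{I}(\trueth)$ (one checks the $(d+1)\times(d+1)$ block structure in $(\theta,\nu)$ is nonsingular). So everything reduces to a CLT for $\sqrt{n}\,\nabla \ell_{n,m}(\truee)$.

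The second, central step is that CLT. The gradient at $\truee$ splits as a sum of two independent pieces: a "data" part $\frac{1}{n}\sum_i \nabla_\xi g_\xi(y_i)$-type term (appropriately weighted in the NCE case) and an "artificial" part involving $\frac1m \sum_j$. For the data part, since the $Y_i$ are IID $f_{\trueth}$, an ordinary multivariate CLT applies, contributing $\boldsymbol\Sigma(\truee)$ (resp.\ $\boldsymbol\Sigma_\tau(\truee)$) — here one uses that $\mathbb{E}_{\trueth}[\nabla_\xi g_\xi(Y)] = 0$ at the truth (for MC-MLE this is the score identity plus the $\nu$-equation; for NCE it is the first-order condition of the population contrastive objective). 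For the artificial part, I invoke the Markov-chain CLT \eqref{eq:clt} guaranteed by (X2), applied to $\varphi = \varphi_\xi^{\mathrm{IS}}$ or $\varphi_\xi^{\mathrm{NCE}}$; the required square-integrability $\mathbb{E}_\psi[\varphi^2] < \infty$ is exactly what (I3) delivers for MC-MLE (note $\|\varphi_\xi^{\mathrm{IS}}\|^2 \le d_\theta (h_\theta/h_\psi)^2$), whereas for NCE the extra factor $\tau f_\psi/(\tau f_\psi + f_\theta) \le 1$ makes $\varphi_\xi^{\mathrm{NCE}}$ bounded enough that no integrability assumption beyond (H4) is needed — which is precisely why (I3) appears only in the MC-MLE half of the theorem. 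This contributes $\boldsymbol\Gamma(\truee)$ (resp.\ $\boldsymbol\Gamma_\tau(\truee)$). Because $m/n \to \tau$, the artificial part scaled by $\sqrt{n}$ carries a factor $\sqrt{n/m} \to \tau^{-1/2}$, producing the $\tau^{-1}$ weight on $\boldsymbol\Gamma$; independence of the two sequences makes the limiting covariance the sum $\boldsymbol\Sigma + \tau^{-1}\boldsymbol\Gamma$. Slutsky's lemma and the continuous mapping theorem then sandwich this between the converging Hessian inverses, yielding $\mathbf{V}_\tau^{\mathrm{NCE}}$ and $\mathbf{V}_\tau^{\mathrm{IS}}$.

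A few supporting calculations need to be done carefully but are routine: (i) identifying that the population gradient vanishes at $\truee$, which for NCE amounts to checking that $\truee$ maximizes the expected log-likelihood of the ideal classifier (a standard NCE identity, cf.\ \citet{nce_2012}), and for MC-MLE uses $\nu^\star = \log\{\Z(\psi)/\Z(\trueth)\}$; (ii) verifying that differentiation under the integral sign is legitimate near $\truee$ — this is where the envelope condition $\int c_\varepsilon(x) \sup_\theta h_\theta(x)\,\mu(\dx) < \infty$ in (H4) is used, mirroring the role of (H2) in Theorem \ref{thm:equivalence_n_fixed}; (iii) the locally uniform LLN for the Hessian, combined with $\tilde\xi_n \to \truee$, to justify replacing $\nabla^2 \ell_{n,m}(\tilde\xi_n)$ by its limit. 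I would state these as lemmas in the appendix.

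The main obstacle I anticipate is the artificial-data term for the \emph{NCE} estimator. The function $\lNCE$ is not an average of the form $\frac1m\sum_j \varphi(X_j)$ with $\varphi$ fixed: the logistic weight $q_{\theta,\nu}$ couples $\theta$, $\nu$ and the ratio $n/m$, and after the Poisson-type reformulation one still has to expand this weight around $\truee$ and around the limiting ratio $\tau$, controlling the error uniformly. Concretely, one must show that the random perturbation of $m/n$ away from $\tau$, and of $\bar\xi_n$ away from $\truee$, inside the classifier weight contributes only $\smalloP(n^{-1/2})$ to the linearized estimating equation; this requires a locally uniform LLN for the family $\{\varphi_\xi^{\mathrm{NCE}} : \xi \in B(\truee,\varepsilon)\}$ under the Markov dynamics, which is more delicate than the IID case treated in \citet{nce_2012} because one cannot simply invoke an IID uniform LLN — instead one needs an $L^1$-type continuity/equicontinuity argument together with the ergodic theorem, using that $\tau f_\psi/(\tau f_\psi + f_\theta)$ and its $\xi$-derivatives are dominated by an integrable envelope built from $c_\varepsilon$ and $h_\theta/h_\psi$. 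Getting this uniformity right, and likewise confirming that the cross-covariance between the data block and the Markov block vanishes (immediate from independence of $(Y_i)$ and $(X_j)$), is the crux; once it is in place the rest is the standard Cramér–Wald plus Slutsky machinery.
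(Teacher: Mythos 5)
Your overall architecture coincides with the paper's: reduce to a Z-estimator expansion (the paper's Lemma \ref{lem:asymptotic_normality}), prove a locally uniform law of large numbers for the Hessian under the envelope in (H4), check invertibility of $\mathbf{J}_\tau(\truee)$ via the Schur complement and the Fisher information, split the gradient at $\truee$ into an independent data block and a Markov block, apply the IID CLT to the former and the $\sqrt{m}$-CLT \eqref{eq:clt} to the latter with the $\sqrt{n/m_n}\to\tau^{-1/2}$ scaling, and observe that the boundedness of the weight $\tau f_\psi/(\tau f_\psi+f_\theta)$ is what makes (I3) unnecessary for NCE. You also correctly identify the crux: $\lNCE$ is not an average of a fixed function of the $X_j$'s because the ratio $m_n/n$ sits inside the logistic weight, so one must replace $m_n/n$ by $\tau$ and show the replacement is harmless.

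The gap is in the tool you propose for that replacement. For the \emph{Hessian}, a locally uniform LLN with an $L^1(\refdist)$ envelope is indeed enough, since the Hessian only needs to converge (it multiplies a factor that is already $\bigOP(n^{-1/2})$). But for the \emph{gradient at} $\truee$ you need
\[
\sqrt{n}\left(\frac{1}{m_n}\sum_{j=1}^{m_n}\big(\nabla_\xi g_\xi\big)\tfrac{f_\theta}{f_\psi}\,\eta^{(n)}(X_j)-\mathbb{E}_\psi\Big[\big(\nabla_\xi g_\xi\big)\tfrac{f_\theta}{f_\psi}\,\eta^{(n)}(X)\Big]\right)\overset{\mathbb{P}}{\longrightarrow}0,
\qquad \eta^{(n)}=\tfrac{m_n f_\psi}{m_n f_\psi+nf_\theta}-\tfrac{\tau f_\psi}{\tau f_\psi+f_\theta},
\]
and an ``$L^1$-type equicontinuity argument together with the ergodic theorem'' can only give that the un-normalised average tends to zero almost surely; multiplied by $\sqrt{n}$ this yields nothing. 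What is actually required is a second-moment argument: the variance of a single summand tends to zero by dominated convergence with an $L^2(\refdist)$ envelope (this is where the bound \eqref{eq:second_moment} enters), and the variance of the normalised sum is then controlled because reversibility plus geometric ergodicity in (X2) give exponentially decaying correlations, so the sum of autocovariances is uniformly summable; Chebyshev then delivers the $\smalloP(n^{-1/2})$ rate. This is precisely the content of the paper's Lemma \ref{lem:CLT_dominated}, and it is the one place where (X2) is used beyond supplying the marginal CLT. As written, your proof would stall at this step; upgrading the envelope from $L^1$ to $L^2$ and replacing the uniform LLN by this variance/mixing bound closes the gap, and the rest of your argument then goes through exactly as in the paper.
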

It is noteworthy that second moment condition (I3) is needed for establishing
MC-MLE asymptotic normality, but not for NCE. This shows that, under
the considered regime, NCE is more robust (to $\refdist$) than MC-MLE. It turns out that, when
the artificial data-points are IID, a finer result can be proven. This is the
point of next section. 

Assumptions (H4) and (I3) admit a simpler formulation when the model is
exponential, as shown by the following proposition.

\begin{prop} If the parametric model is exponential, i.e. if
	$h_{\theta}(x)=\exp\big\{\theta^TS(x)\big\}$ for some statistic $S$,
	then assumptions (H4) and (I3) are equivalent to the following
	assumptions (H4-exp) and (I3-exp):

\begin{description} \item [{(H4-exp)}] The Fisher Information
	$\mathbf{I}(\theta)=\mathbb{V}_{\theta}\big(\nabla_{\theta}\log
	h_{\theta}(Y)\big)$ is invertible at $\theta=\theta^{\star}$.  \item
	[{(I3-exp)}] The true parameter $\theta^{\star}$ belongs to the
	interior of
	$\Theta_{\psi}=\left\{\theta:\mathbb{E}_{\psi}\left[\big(\frac{h_{\theta}(X)}{h_{\psi}(X)}\big)^{2}\right]<\infty\right\}$.
	\end{description}

In particular, if $\refdist\in\left\{\Pt\right\}_{\theta\in\Theta}$, then
(I3-exp) holds as soon as $2\trueth - \psi$ belongs to the interior of
$\widetilde{\Theta}=\left\{\theta\in\R^d:\int_{\setX}\exp\big\{\theta^TS(x)\big\}\mu(\dx)<\infty\right\}$.
\end{prop}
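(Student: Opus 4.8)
The plan is to substitute $\log h_{\theta}(x)=\theta^{T}S(x)$ everywhere, so that $\nabla_{\theta}\log h_{\theta}(x)=S(x)$ and $\nabla_{\theta}^{2}\log h_{\theta}(x)=0$, and then reduce both (H4) and (I3) to finiteness of polynomial moments of exponentially tilted versions of $\mu$. In the exponential case the maps $\theta\mapsto h_{\theta}(x)$ are $C^{\infty}$, so the differentiability clauses of (H4) are automatic; moreover $c_{\varepsilon}(x)=1+\|S(x)\|^{2}$ and, at $\theta=\trueth$, $d_{\theta}(x)=1+\|S(x)\|^{2}$. Hence (H4) reduces to ``$\mathbf{I}(\trueth)$ invertible'' together with $\int_{\setX}(1+\|S(x)\|^{2})\sup_{\theta\in B(\trueth,\varepsilon)}e^{\theta^{T}S(x)}\mu(\dx)<\infty$ for some $\varepsilon>0$, and (I3) reduces, after absorbing $f_{\psi}$ into the exponent and dropping the constant $\Zp^{-1}$, to $\int_{\setX}(1+\|S(x)\|^{2})e^{(2\trueth-\psi)^{T}S(x)}\mu(\dx)<\infty$. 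The analytic input I would invoke is the classical fact that the Laplace transform $\Lambda(\phi)=\int_{\setX}e^{\phi^{T}S(x)}\mu(\dx)$ is finite and real-analytic on $\mathrm{int}(\widetilde{\Theta})$, and that at each $\phi\in\mathrm{int}(\widetilde{\Theta})$ every moment $\int_{\setX}\|S(x)\|^{k}e^{\phi^{T}S(x)}\mu(\dx)$ is finite; the latter follows by taking a small cube around $\phi$ inside $\widetilde{\Theta}$ and dominating $\|S\|^{k}e^{\phi^{T}S}$ by a constant times $\sum_{\sigma\in\{\pm1\}^{d}}e^{(\phi+\delta\sigma)^{T}S}$.

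For the (H4) equivalence: since $f_{\theta}=h_{\theta}/\Zt$ is a density we have $\Theta\subseteq\widetilde{\Theta}$, and $\Theta$ is open, so $\trueth\in\mathrm{int}(\widetilde{\Theta})$. As $\theta\mapsto e^{\theta^{T}S(x)}$ is convex, its supremum over $B(\trueth,\varepsilon)$ is at most $\sum_{\sigma\in\{\pm1\}^{d}}e^{(\trueth+\varepsilon\sigma)^{T}S(x)}$, the sum running over the $2^{d}$ vertices of the cube of half-side $\varepsilon$ centred at $\trueth$; choosing $\varepsilon$ small enough that all these vertices lie in $\mathrm{int}(\widetilde{\Theta})$, the moment fact makes each summand integrable against $(1+\|S\|^{2})\mu(\dx)$. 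Hence (H4) $\Leftrightarrow$ (H4-exp), with no integrability condition left over.

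For the (I3) equivalence, write $\phi_{0}=2\trueth-\psi$ and note $\Theta_{\psi}=\Theta\cap\{\theta:2\theta-\psi\in\widetilde{\Theta}\}$; since $\Theta$ is open and $\theta\mapsto2\theta-\psi$ is an affine homeomorphism, one gets $\mathrm{int}(\Theta_{\psi})=\{\theta\in\Theta:2\theta-\psi\in\mathrm{int}(\widetilde{\Theta})\}$, so $\trueth\in\mathrm{int}(\Theta_{\psi})$ is equivalent to $\phi_{0}\in\mathrm{int}(\widetilde{\Theta})$ --- which also makes the convexity of $\Theta_{\psi}$ (from that of $\widetilde{\Theta}$, a H\"older argument) and the ``in particular'' clause immediate. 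Then (I3-exp) $\Rightarrow$ (I3) is precisely the moment fact applied at $\phi_{0}\in\mathrm{int}(\widetilde{\Theta})$. The main obstacle is the converse, (I3) $\Rightarrow$ (I3-exp): a priori the single weighted second moment $\int(1+\|S\|^{2})e^{\phi_{0}^{T}S}\mu(\dx)<\infty$ only yields $\phi_{0}\in\widetilde{\Theta}$, and upgrading this to $\phi_{0}\in\mathrm{int}(\widetilde{\Theta})$ must exploit the convex-analytic structure of $\widetilde{\Theta}$; reading (I3) with a locally uniform supremum over a ball $B(\trueth,\varepsilon)$, as is built into (I1), would make the enclosing-cube argument of the previous paragraph apply verbatim in this direction as well. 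This is the step I expect to require the most care.
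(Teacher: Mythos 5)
Your proof follows essentially the same route as the paper's: the supremum of $\phi\mapsto e^{\phi^{T}S(x)}$ over a small ball is dominated by the sum over the $2^{d}$ vertices of an enclosing cube (the paper does this on the $\ell_{1}$-ball via the sign vector $b(x)=\sgn(S(x))$, for which the supremum is attained at $\theta+\varepsilon b(x)$), and the polynomial factor $1+\|S(x)\|^{2}$ is absorbed through $\|S(x)\|_{1}^{2}\le\varepsilon^{-2}\exp\{2\varepsilon\, b(x)^{T}S(x)\}$; combined, these reduce the integrability clauses of (H4) and (I3) to finiteness of finitely many vertex integrals, which interiority supplies. The one caveat you raise --- that the converse (I3)$\Rightarrow$(I3-exp) needs more than a single weighted moment at $\theta^{\star}$ --- is well founded: the paper's own displayed inequalities only establish the implication from the -exp conditions to the general ones (``choosing $\theta=\theta^{\star}$ in the preceding inequalities''), and indeed a pointwise weighted second moment can be finite at a boundary point of $\Theta_{\psi}$, so the literal ``equivalence'' is not proved there either; the direction you complete is exactly the direction the theorems use.
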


\subsection{Comparison of asymptotic variances}
\begin{thm} If the artificial data-points $(X_j)_{j\ge1}$ are IID, then under assumptions (H4) and (I3),
$\mathbf{V}_{\tau}^{\mathrm{IS}}(\truee)\succcurlyeq\mathbf{V}_{\tau}^{\mathrm{NCE}}(\truee)$, i.e. $\mathbf{V}_{\tau}^{\mathrm{IS}}(\truee)-\mathbf{V}_{\tau}^{\mathrm{NCE}}(\truee)$ is a positive semi-definite matrix.\label{thm:variance_comparison}
\end{thm}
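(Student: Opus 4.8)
The plan is to evaluate both asymptotic variances at $\xi=\truee$, reduce them to fully explicit matrices, and then obtain the Loewner inequality from a Schur-complement argument. Write $u(x):=\nabla_{\xi}g_{\truee}(x)$; because $g_{\xi}$ is affine in $\nu$, the last coordinate of $u$ is identically $1$, a fact that will do all the real work. Put $w(x)=f_{\trueth}(x)/f_{\psi}(x)$ and $r(x)=\tau f_{\psi}(x)/\{\tau f_{\psi}(x)+f_{\trueth}(x)\}=\tau/(\tau+w(x))$, so that $wr=\tau(1-r)$ and $\mathbb{E}_{\psi}[w(X)\phi(X)]=\mathbb{E}_{\trueth}[\phi(Y)]$ for integrable $\phi$. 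Under the maintained hypotheses of Theorem~\ref{thm:asymptotic_normality} the matrices below are finite and invertible, and a direct expansion of the formulas there gives $\mathbf{J}=\mathbb{E}_{\trueth}[uu^{T}]$ and $\mathbf{J}_{\tau}=\mathbb{E}_{\trueth}[r\,uu^{T}]=\mathbb{E}_{\psi}[\tfrac{\tau w}{\tau+w}uu^{T}]$.

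Next I would rewrite the four building blocks. Since the artificial points are IID the autocovariance series disappear, leaving $\boldsymbol{\Gamma}=\mathbb{V}_{\psi}(u(X)w(X))$ and $\boldsymbol{\Gamma}_{\tau}=\mathbb{V}_{\psi}(u(X)w(X)r(X))$. Expanding the centred variances of $u$, $ur$, $uw$ and $uwr$ with the identities $wr=\tau(1-r)$, $wr^{2}+\tau(1-r)^{2}=\tau(1-r)$ and the change of measure above, everything telescopes: one gets $\boldsymbol{\Sigma}+\tau^{-1}\boldsymbol{\Gamma}=\mathbf{M}-(1+\tau^{-1})\,\bar{u}\,\bar{u}^{T}$ with $\mathbf{M}:=\mathbb{E}_{\psi}[(w+\tau^{-1}w^{2})uu^{T}]$ and $\bar{u}:=\mathbb{E}_{\trueth}[u(Y)]$, and $\boldsymbol{\Sigma}_{\tau}+\tau^{-1}\boldsymbol{\Gamma}_{\tau}=\mathbf{J}_{\tau}-(1+\tau^{-1})\,\mu\,\mu^{T}$ with $\mu:=\mathbb{E}_{\trueth}[r(Y)u(Y)]$. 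Hence $\mathbf{V}_{\tau}^{\mathrm{IS}}(\truee)=\mathbf{J}^{-1}\mathbf{M}\mathbf{J}^{-1}-(1+\tau^{-1})\mathbf{J}^{-1}\bar{u}\bar{u}^{T}\mathbf{J}^{-1}$ and $\mathbf{V}_{\tau}^{\mathrm{NCE}}(\truee)=\mathbf{J}_{\tau}^{-1}-(1+\tau^{-1})\mathbf{J}_{\tau}^{-1}\mu\mu^{T}\mathbf{J}_{\tau}^{-1}$.

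Now the affine dependence on $\nu$ enters: since $u^{T}e_{d+1}\equiv1$ (with $e_{d+1}$ the last basis vector of $\R^{d+1}$), $\mathbf{J}e_{d+1}=\mathbb{E}_{\trueth}[u(u^{T}e_{d+1})]=\bar{u}$ and likewise $\mathbf{J}_{\tau}e_{d+1}=\mu$, so $\mathbf{J}^{-1}\bar{u}=\mathbf{J}_{\tau}^{-1}\mu=e_{d+1}$ and both rank-one corrections equal $(1+\tau^{-1})e_{d+1}e_{d+1}^{T}$. They cancel in the difference, leaving
\[
\mathbf{V}_{\tau}^{\mathrm{IS}}(\truee)-\mathbf{V}_{\tau}^{\mathrm{NCE}}(\truee)=\mathbf{J}^{-1}\mathbf{M}\mathbf{J}^{-1}-\mathbf{J}_{\tau}^{-1}=\mathbf{J}^{-1}\bigl(\mathbf{M}-\mathbf{J}\mathbf{J}_{\tau}^{-1}\mathbf{J}\bigr)\mathbf{J}^{-1},
\]
so it suffices to show $\mathbf{M}\succcurlyeq\mathbf{J}\mathbf{J}_{\tau}^{-1}\mathbf{J}$, equivalently (Schur complement, $\mathbf{J}_{\tau}\succ0$) that $\left(\begin{smallmatrix}\mathbf{M}&\mathbf{J}\\\mathbf{J}&\mathbf{J}_{\tau}\end{smallmatrix}\right)\succcurlyeq0$. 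Writing all three blocks as expectations against $\mathbb{P}_{\psi}$, this block matrix equals $\mathbb{E}_{\psi}\!\left[\left(\begin{smallmatrix}w+\tau^{-1}w^{2}&w\\w&\tau w/(\tau+w)\end{smallmatrix}\right)\otimes u(X)u(X)^{T}\right]$; the scalar $2\times2$ matrix has non-negative diagonal and determinant $\tfrac{w(\tau+w)}{\tau}\cdot\tfrac{\tau w}{\tau+w}-w^{2}=0$, hence is positive semi-definite for every $w\ge0$, and taking the Kronecker product with $uu^{T}\succcurlyeq0$ (which preserves positive semi-definiteness) and then the expectation concludes.

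The substantive content, and where the effort sits, is threefold: (i) the telescoping in the second step, a routine but lengthy expansion of the variances of $u$, $ur$, $uw$, $uwr$; (ii) noticing that the affine dependence of $g_{\xi}$ on $\nu$ forces $\mathbf{J}^{-1}\bar{u}=\mathbf{J}_{\tau}^{-1}\mu=e_{d+1}$, which is exactly what makes the two rank-one terms cancel; and (iii) the observation that the relevant $2\times2$ scalar matrix is exactly singular — the partial-fraction identity $w^{-1}=\tau\{w(\tau+w)\}^{-1}+(\tau+w)^{-1}$ in disguise, and the only inequality used, which also shows the comparison is tight. I expect step (iii), or rather spotting the right $2\times2$ Schur block, to be the main conceptual obstacle. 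The restriction to IID artificial data is essential: for a dependent chain $\boldsymbol{\Gamma}$ and $\boldsymbol{\Gamma}_{\tau}$ carry the long-run variances of $\varphi_{\xi}^{\mathrm{IS}}$ and $\varphi_{\xi}^{\mathrm{NCE}}=r\,\varphi_{\xi}^{\mathrm{IS}}$, which are not linked by the same pointwise identity, and the cancellation breaks down.
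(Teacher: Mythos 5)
Your proof is correct, and it reaches the paper's own pivotal inequality \eqref{eq:inequality_matrices} — namely $\mathbb{E}_{\theta}[\nabla\nabla^Tg_{\xi}R^{-1}]\succcurlyeq\mathbb{E}_{\theta}[\nabla\nabla^Tg_{\xi}]\,\mathbb{E}_{\theta}[\nabla\nabla^Tg_{\xi}R]^{-1}\,\mathbb{E}_{\theta}[\nabla\nabla^Tg_{\xi}]$ — by the same overall route (explicit computation of $\boldsymbol{\Sigma},\boldsymbol{\Gamma},\boldsymbol{\Sigma}_{\tau},\boldsymbol{\Gamma}_{\tau}$, cancellation of the rank-one centring terms, then a Schur-complement positivity argument). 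Where you genuinely diverge is in how you dispatch the two technical lemmas the paper relies on. First, the paper's Lemma \ref{lem:empty_matrix} establishes that each centring term equals $e_{d+1}e_{d+1}^T$ by an explicit block-matrix inversion; your observation that $u^Te_{d+1}\equiv1$ forces $\mathbf{J}e_{d+1}=\bar u$ and $\mathbf{J}_{\tau}e_{d+1}=\mu$ gives the same conclusion in one line and makes transparent \emph{why} the two corrections coincide. Second, the paper proves \eqref{eq:inequality_matrices} via Lemma \ref{lem:jensen_matrices}, a matrix Jensen inequality for $(A,B)\mapsto B^TA^{\dagger}B$ whose convexity is itself shown by Ando's generalized-Schur-complement trick; you instead exhibit the block matrix $\bigl(\begin{smallmatrix}\mathbf{M}&\mathbf{J}\\\mathbf{J}&\mathbf{J}_{\tau}\end{smallmatrix}\bigr)$ directly as $\mathbb{E}_{\psi}$ of a pointwise positive semi-definite Kronecker product, the $2\times2$ scalar factor being exactly singular. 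The underlying pointwise identity is the same one hiding inside the paper's Lemma \ref{lem:jensen_matrices} when specialised to $B=\nabla\nabla^Tg_{\xi}$, $A=BR$, but your version avoids pseudo-inverses and the convexity/Jensen machinery entirely, at the cost of being tailored to this rank-one situation rather than yielding a reusable general lemma. Your closing remarks — that the singularity of the $2\times2$ block shows where the inequality can be tight, and that the pointwise link $\varphi^{\mathrm{NCE}}_{\xi}=R\,\varphi^{\mathrm{IS}}_{\xi}$ no longer controls the long-run variances in the Markovian case — are accurate and match the paper's own caveat that the result is only proved for IID artificial data.
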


Theorem 5 shows that, asymptotically, when $m/n\rightarrow\tau>0$, and when the artificial data-points are IID, the variance of a NCE estimator is always lower than the variance of the corresponding MC-MLE estimator. This inequality is with respect to the Loewner partial order on symmetric matrices. To our knowledge, this is the first theoretical result proving that NCE dominates MC-MLE in terms of mean square error. We failed however to extend this result to correlated Markov chains.

This inequality holds for any fixed ratio $\tau\in(0,+\infty)$, and any given
sampling distribution $\refdist$, but the sharpness of the bound remains
unknown. Typically, the bigger is $\tau$, the closer the two variances will be,
as the ratio $\tau f_{\psi}/\tau f_{\psi}+ f_{\theta^{\star}}$ gets closer to
one. It is also the case when the sampling distribution $\refdist$ is close to
the true data distribution $\mathbb{P}_{\theta^{\star}}$. \citet{geyer_1994}
noticed that MC-MLE performs better when $\refdist$ is close to
$\mathbb{P}_{\theta^{\star}}$. Next proposition shows that when
$\refdist=\mathbb{P}_{\theta^{\star}}$, both variances can be related to the
variance of the MLE.

\begin{prop}
If the artificial data-points are IID sampled from $\refdist=\mathbb{P}_{\theta^{\star}}$, then under assumptions (H4) and (I3) we have
$$\mathbf{V}_{\tau}^{\mathrm{NCE}}(\truee)=\mathbf{V}_{\tau}^{\mathrm{IS}}(\truee)=(1+\tau^{-1})\mathbf{V}^{\mathrm{MLE}}(\truee)$$
where $\mathbf{V}^{\mathrm{MLE}}(\xi)=\mathbf{J}(\xi)^{-1}\boldsymbol{\Sigma}(\xi)\mathbf{J}(\xi)^{-1}$. \label{prop_var_mle}
\end{prop}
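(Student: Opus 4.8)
The plan is to specialise the variance formulas of Theorem \ref{thm:asymptotic_normality} to the case $\refdist=\mathbb{P}_{\trueth}$ with IID artificial data-points, and to observe that both sandwich expressions collapse onto $(1+\tau^{-1})\mathbf{V}^{\mathrm{MLE}}(\truee)$. First I would use the IID assumption to kill all the autocovariance series: since the $(X_j)_{j\ge1}$ are independent, every term $\mathrm{Cov}(\varphi_{\truee}^{\bullet}(X_0),\varphi_{\truee}^{\bullet}(X_i))$, $i\ge1$, vanishes, so that $\boldsymbol{\Gamma}_{\tau}(\truee)=\mathbb{V}_{\psi}(\varphi_{\truee}^{\mathrm{NCE}}(X))$ and $\boldsymbol{\Gamma}(\truee)=\mathbb{V}_{\psi}(\varphi_{\truee}^{\mathrm{IS}}(X))$.

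Next I would exploit the identities that hold at $\xi=\truee$ when $\psi=\trueth$. At this point the $\theta$-component of $\xi$ equals $\trueth=\psi$, hence $f_{\theta}=f_{\psi}$; consequently the importance ratio $(f_{\theta}/f_{\psi})(x)\equiv 1$ and the NCE weight $\big(\tau f_{\psi}/(\tau f_{\psi}+f_{\theta})\big)(x)\equiv w$, where $w:=\tau/(1+\tau)$ is a constant. For the MC-MLE side this immediately gives $\varphi_{\truee}^{\mathrm{IS}}=\nabla_{\xi}g_{\truee}$, hence $\boldsymbol{\Gamma}(\truee)=\mathbb{V}_{\psi}(\nabla_{\xi}g_{\truee}(X))=\mathbb{V}_{\trueth}(\nabla_{\xi}g_{\truee}(Y))=\boldsymbol{\Sigma}(\truee)$ (since $X$ and $Y$ are then both $\mathbb{P}_{\trueth}$-distributed), so that $\mathbf{V}_{\tau}^{\mathrm{IS}}(\truee)=\mathbf{J}(\truee)^{-1}\{\boldsymbol{\Sigma}(\truee)+\tau^{-1}\boldsymbol{\Sigma}(\truee)\}\mathbf{J}(\truee)^{-1}=(1+\tau^{-1})\mathbf{V}^{\mathrm{MLE}}(\truee)$.

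For the NCE side I would use that the constant $w$ factors out of each ingredient: since it can be pulled through the expectation and variance operators, $\mathbf{J}_{\tau}(\truee)=w\,\mathbf{J}(\truee)$, $\boldsymbol{\Sigma}_{\tau}(\truee)=w^{2}\boldsymbol{\Sigma}(\truee)$, and $\varphi_{\truee}^{\mathrm{NCE}}=w\,\nabla_{\xi}g_{\truee}$ so that $\boldsymbol{\Gamma}_{\tau}(\truee)=w^{2}\boldsymbol{\Sigma}(\truee)$ as well. Substituting into $\mathbf{V}_{\tau}^{\mathrm{NCE}}=\mathbf{J}_{\tau}^{-1}\{\boldsymbol{\Sigma}_{\tau}+\tau^{-1}\boldsymbol{\Gamma}_{\tau}\}\mathbf{J}_{\tau}^{-1}$, the factor $w^{-2}$ coming from $\mathbf{J}_{\tau}^{-1}$ on each side cancels the $w^{2}$ multiplying $\boldsymbol{\Sigma}(\truee)$, leaving $\mathbf{V}_{\tau}^{\mathrm{NCE}}(\truee)=(1+\tau^{-1})\mathbf{J}(\truee)^{-1}\boldsymbol{\Sigma}(\truee)\mathbf{J}(\truee)^{-1}=(1+\tau^{-1})\mathbf{V}^{\mathrm{MLE}}(\truee)$, matching the MC-MLE expression. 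Assumptions (H4) and (I3) enter only to guarantee that all these matrices are well defined and that $\mathbf{J}(\truee)$ (equivalently the Fisher information at $\trueth$) is invertible, so the identity is a genuine equality of matrices.

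There is essentially no obstacle here: the statement is a direct substitution into the formulas of Theorem \ref{thm:asymptotic_normality}. The only two points demanding a little care are (i) remembering that IID-ness annihilates the autocovariance series in $\boldsymbol{\Gamma}_{\tau}$ and $\boldsymbol{\Gamma}$, and (ii) checking that the weight $\tau f_{\psi}/(\tau f_{\psi}+f_{\theta})$ is genuinely constant at $\xi=\truee$ when $\psi=\trueth$, which is what allows it to be extracted from the expectation and variance operators so that the $w$ factors cancel in the sandwich.
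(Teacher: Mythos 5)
Your proof is correct and is essentially the intended argument: the paper gives no separate proof of this proposition, and the identity follows exactly as you describe by substituting $f_{\theta^{\star}}=f_{\psi}$ and the constant weight $w=\tau/(1+\tau)$ into the formulas of Theorem \ref{thm:asymptotic_normality}, with the autocovariance series vanishing in the IID case. The same conclusion also drops out of the collapsed expressions derived in the paper's proof of Theorem \ref{thm:variance_comparison}, namely $\mathbf{V}_{\tau}^{\mathrm{IS}}(\xi)=\mathbb{E}_{\theta}\big[\nabla\nabla^{T}g_{\xi}\big]^{-1}\mathbb{E}_{\theta}\big[\nabla\nabla^{T}g_{\xi}R^{-1}\big]\mathbb{E}_{\theta}\big[\nabla\nabla^{T}g_{\xi}\big]^{-1}-(1+\tau^{-1})\mathbf{M}$ and $\mathbf{V}_{\tau}^{\mathrm{NCE}}(\xi)=\mathbb{E}_{\theta}\big[\nabla\nabla^{T}g_{\xi}R\big]^{-1}-(1+\tau^{-1})\mathbf{M}$, once one notes that $R\equiv\tau/(1+\tau)$ when $\refdist=\mathbb{P}_{\theta^{\star}}$; your direct substitution is an equivalent, self-contained route to the same cancellation.
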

It is straightforward to check that, under the usual conditions ensuring
asymptotic normality of the MLE, the extended maximiser of the Poisson
Transform $\ln$ is also asymptotically normal with variance
$\mathbf{V}^{\mathrm{MLE}}(\truee)$. This proposition shows what we can expect
from NCE and MC-MLE in a ideal scenario where the sampling distribution is the
same as the true data distribution.

\section{Numerical example\label{sec:Numerical-study}}

This section presents a numerical example that illustrates how the 
variance reduction brought by NCE may vary according to
the sampling distribution $\refdist$ and the ratio $\tau$. 

We consider observations IID distributed from the multivariate Gaussian distribution $\mathcal{N}_p(\mu,\Sigma)$ truncated to $]0,+\infty[^p$; that is $Y_1,...,Y_n$ are IID with the following probability density with respect to Lebesgue's measure: 
\[
	f_{\mu,\Sigma}(x)=\frac{1}{\mathcal{Z}(\mu,\Sigma)}\exp\left\{-\frac{1}{2}(x-\mu)^{T}\Sigma^{-1}(x-\mu)\right\}\mathds{1}_{]0,+\infty[^p}(x)
\] 
where
$$\mathcal{Z}(\mu,\Sigma)=(2\pi)^{p/2}|\Sigma|^{1/2}\mathbb{P}\left(W\in]0,+\infty[^p\right),\qquad W\sim \mathcal{N}_p(\mu,\Sigma).$$
The probability $\mathbb{P}\left(W\in]0,+\infty[^p\right)$ is intractable for
almost every $(\mu,\Sigma)$. Numerical approximations of such probabilities
quickly become inefficient when $p$ increases. 
We generate IID artificial data-points from density 
$f_{\mu,\Sigma}$ with $\mu=\mathbf{0}_p$ and $\Sigma=\lambda\mathbf{I}_p$ for
some $\lambda>0$.  

It is well known that (truncated) Gaussian densities form an 
exponential family under the following parametrisation: for a given
$\mu\in\mathbb{R}^p$ and $\Sigma\in\mathbb{S}_p^{++}$ (the set of positive 
definite matrices of size $p$), define
$\theta=(\Sigma^{-1}\mu,\mathrm{triu}(-(1/2)\Sigma^{-1}))$, and
$S(x)=(x,\mathrm{triu}(xx^{T}))$, where $\mathrm{triu}(.)$ is the upper
triangular part. 
This parametrisation is minimal and the natural parameter space is a convex
open subset of $\mathbb{R}^q$ where $q=p+p(p+1)/2$. Indeed, under the
exponential formulation, we have $\Theta=\Theta_1\times\Theta_2$ where
$\Theta_1=\mathbb{R}^p$ and $\Theta_2$ is on open cone of
$\mathbb{R}^{p(p+1)/2}$, in bijection with $\mathbb{S}_p^{++}$ through the
function $\mathrm{triu}(.)$. 

The observations are sampled IID from $\Pt$ for some true parameter
$\theta=\theta^{\star}$, corresponding to 
$$\mu^{\star}=\begin{pmatrix}
1  \\
-1 \\
0.5
\end{pmatrix},\qquad\Sigma^{\star}=\begin{pmatrix}
1 & 0.5 & 1\\
0.5 & 1.5 & 0.3 \\
1 & 0.3 & 2
\end{pmatrix},$$
in the usual Gaussian parametrisation. The sample size is fixed to $n=1000$,
while $m$ is chosen such that the ratio $m/n$ is equal to
$\tau\in\{1,5,20,100\}$. The distribution $\refdist$ is chosen as stated above
for $\lambda\in[1.5,20]$. 

Figure \ref{fig:both_estimates_vs_MLE} plots estimates and
confidence intervals of the mean square error ratio (mean square error of 
the estimator divided by the asymptotic variance of the MLE) 
of both estimators (NCE and MC-MLE), based on 1000 independent replications. 
(Regarding the numerator of this ratio, note that the variance of the MLE may
be estimated by performing noise contrastive estimation with
$\refdist=\mathbb{P}_{\theta^*}$, see Proposition \ref{prop_var_mle}.) 

\begin{figure}[!tbp]
  \centering
   \begin{minipage}[c]{.48\linewidth}
      \includegraphics[width=\linewidth]{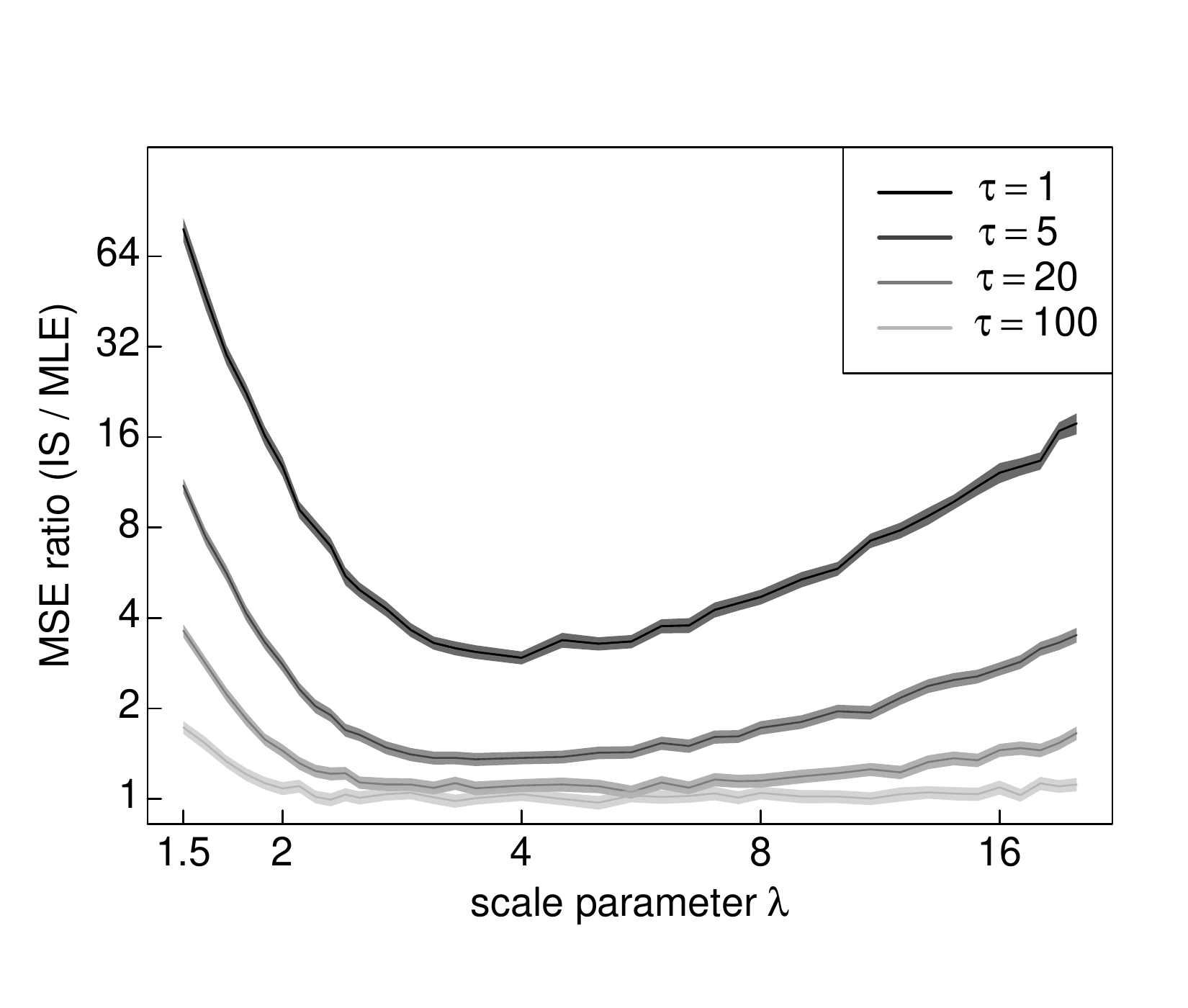}
   \end{minipage} \hfill
   \begin{minipage}[c]{.48\linewidth}
      \includegraphics[width=\linewidth]{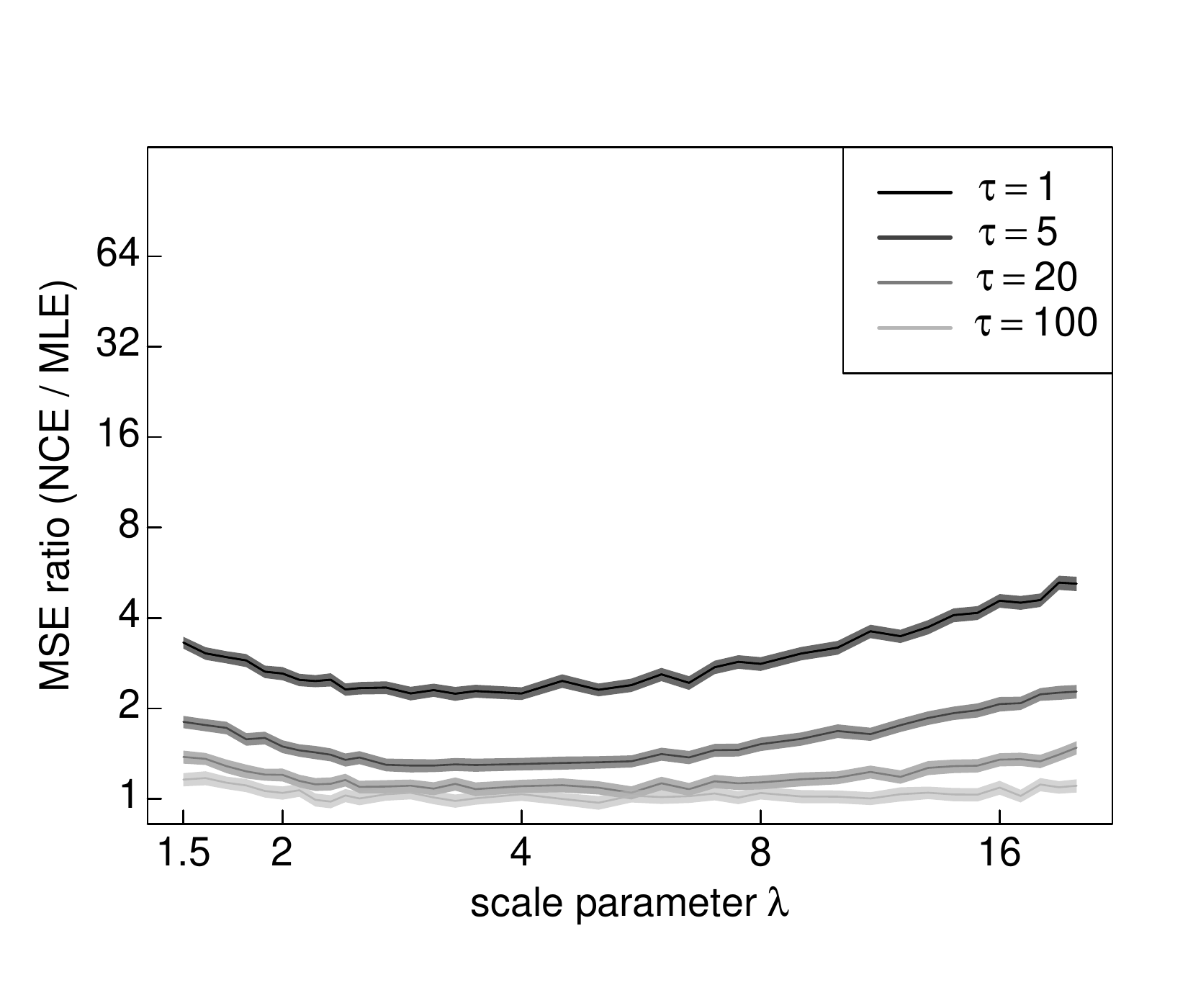}
   \end{minipage}
   \captionof{figure}{Estimates and confidence intervals of the Mean Square
   Error ratios of MC-MLE (left) and NCE (right), compared to the MLE. The MSE
   ratio depends both on the variance of the proposal distribution $\lambda$
   and the number of artificial data-points $m=\tau\times n$ ($n=1000$). A
   log-scale is used for both axes.}
   \label{fig:both_estimates_vs_MLE}
\end{figure}

To facilitate the direct comparison between NCE and MC-MLE, we also plot in
Figure \ref{fig:MCMLEvsNCE} estimates and confidence intervals of the MSE ratio
of MC-MCLE over NCE. As expected from Theorem \ref{thm:variance_comparison}, 
this ratio is always higher than one; it becomes larger and larger as $\tau$
decreases, or as $\lambda$ moves away from its optimal value (around 4). 
This suggests that NSE is more robust than MC-MLE to a poor choice for the 
reference distribution. 

\begin{figure}[!tbp]
	\begin{center}
      \begin{minipage}[c]{.48\linewidth}
      \includegraphics[width=\linewidth]{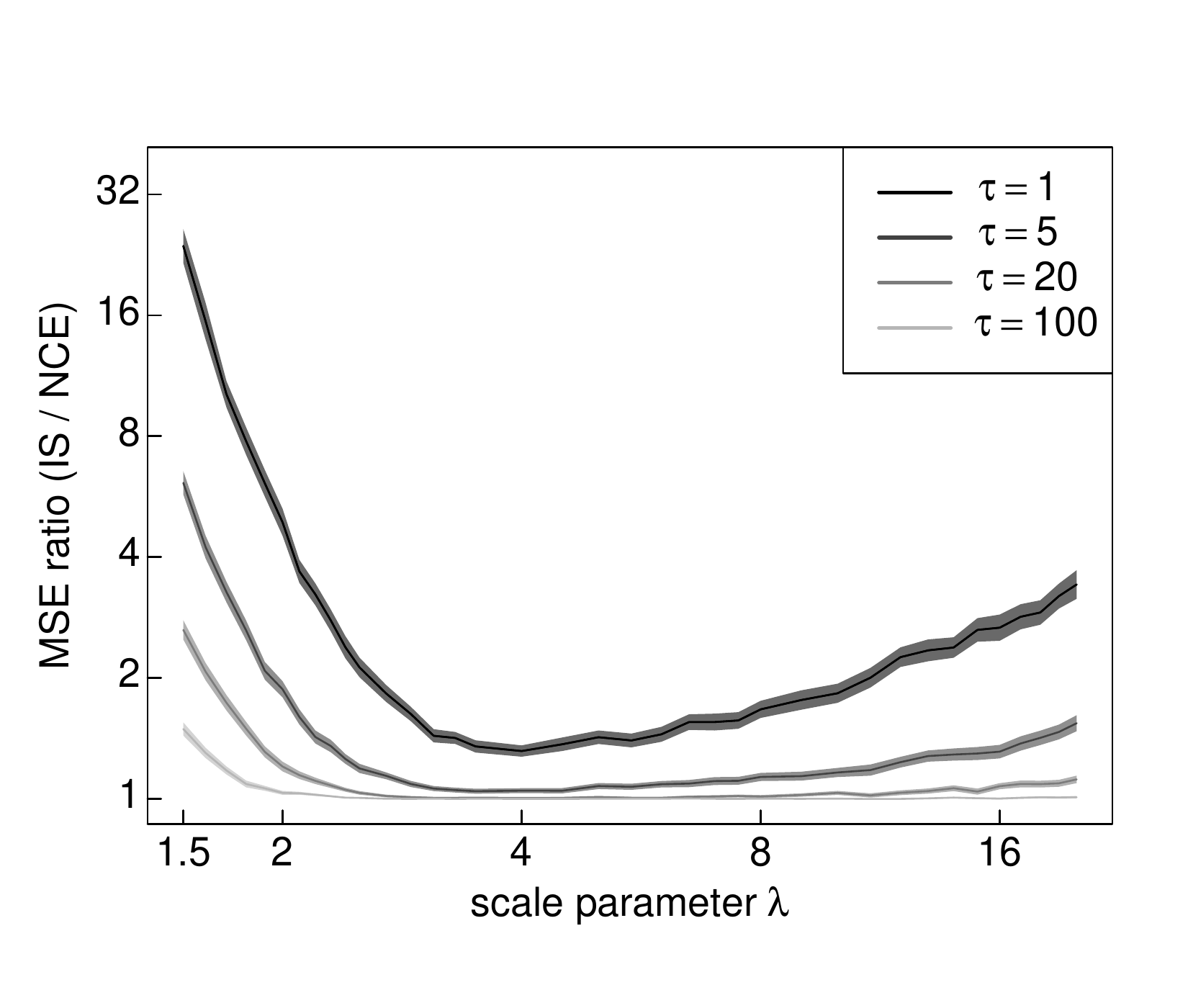}
   \end{minipage}
\end{center}
      \captionof{figure}{Estimates and confidence intervals of the Mean Square
      Error ratios of MC-MLE, compared to the NCE. The MSE ratio depends both
      on the variance of the proposal distribution $\lambda$ and the number of
      artificial data-points $m=\tau\times n$ ($n=1000$). A log-scale is used
      for both axis.}
      \label{fig:MCMLEvsNCE}
\end{figure}

Finally, we discuss a technical difficulty related to the constrained nature of
the parameter space $\Theta$. In principle, both the NCE and the MC-MLE
estimators should be obtained through constrained optimisation (i.e.  as
maximisers of their respective objective functions over $\Theta$). However, it
is much easier (here, and in many cases) to perform an unconstrained
optimisation (over $\R^q$). We must check then that the so obtained solution
fulfils the constraint that defines $\Theta$ (here, that the solution
corresponds to a matrix $\Sigma$ which is definite positive). Figure
\ref{fig:prob_exists} plots estimates and confidence intervals of the
probability that both estimators belong to $\Theta$. We see that NCE (when
implemented without constraints) is much more likely to produce estimates that
belong to $\Theta$. 

Note also that when the considered model is an exponential family (as in this
case), both functions $\lIS$ and $\lNCE$ are convex. This implies that, when
the unconstrained maximiser of these functions do not fulfil the constraint
that defines $\Theta$, then the constrained maximiser does not exist. (Any
solution of the constrained optimisation program lies on the boundary of the
constrained set.)

\begin{figure}[!tbp]
  \centering
   \begin{minipage}[c]{.48\linewidth}
      \includegraphics[width=\linewidth]{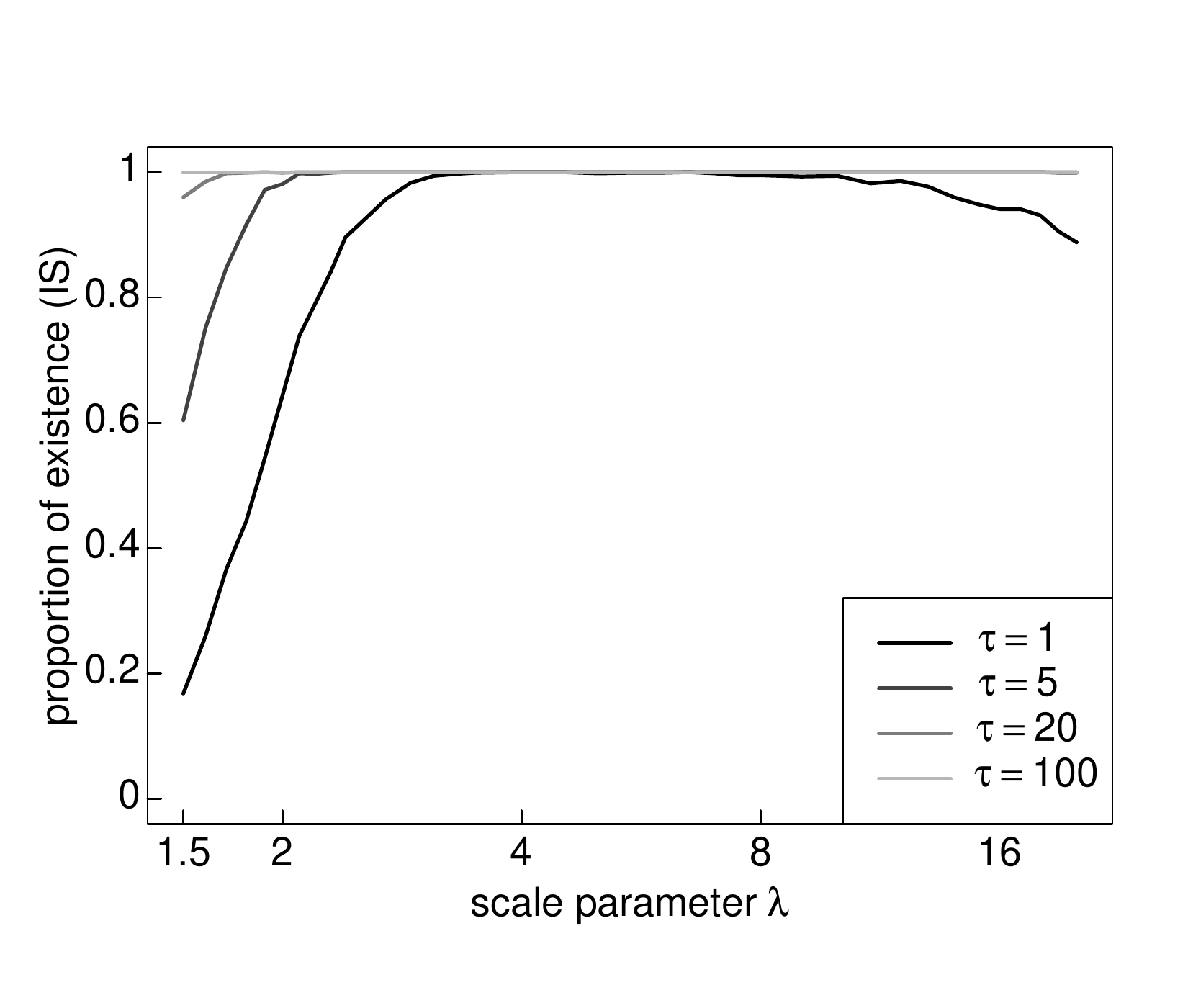}
   \end{minipage} \hfill
   \begin{minipage}[c]{.48\linewidth}
      \includegraphics[width=\linewidth]{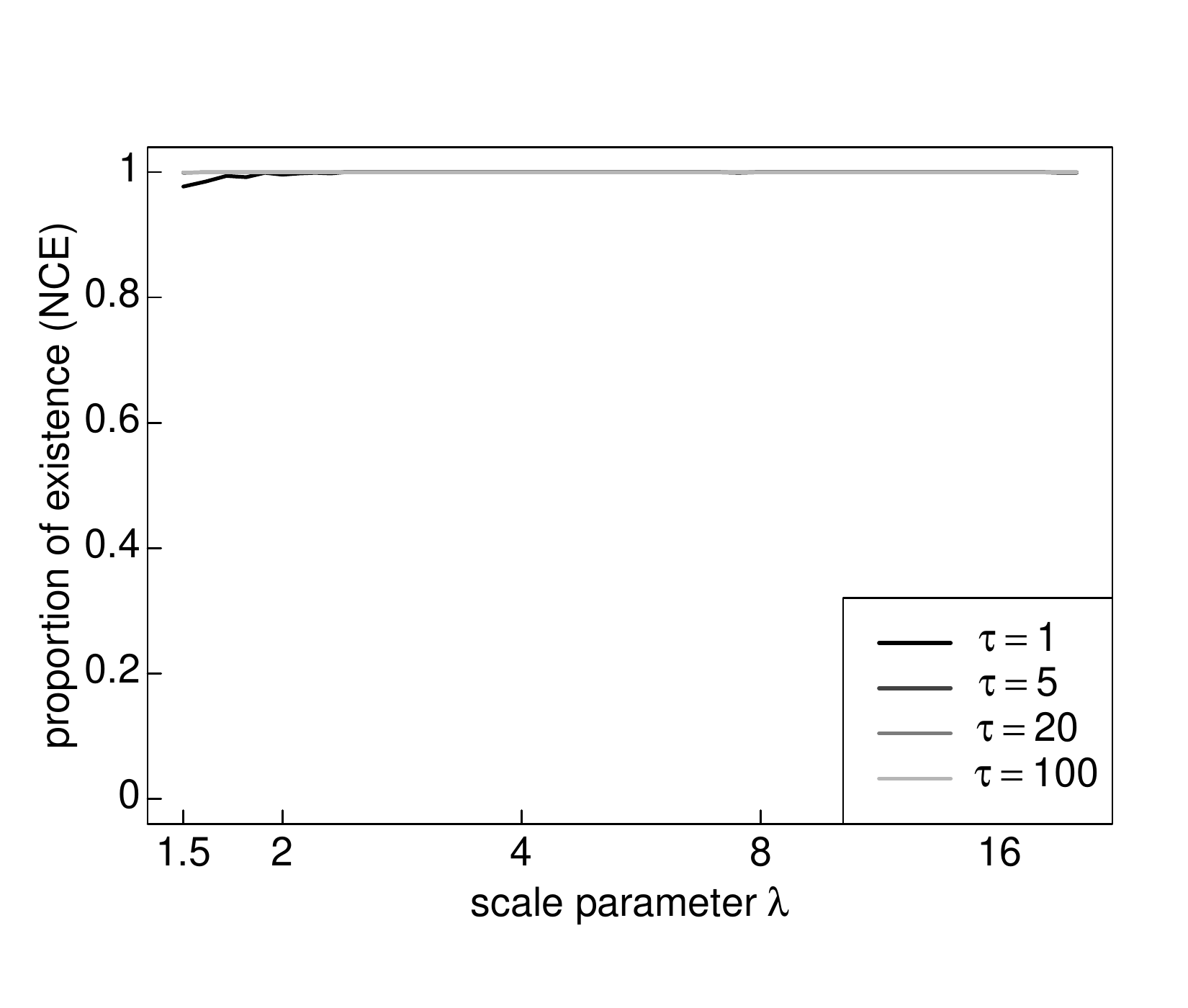}
   \end{minipage}
      \captionof{figure}{Estimates and confidence intervals of the probability
      of existence of MC-MLE (left) and NCE (right) estimators. For a fixed
      $n=1000$, the probability of belonging to $\Theta$ is lower for MC-MLE,
      especially for small values of the variance of the proposal distribution
      $\lambda$ and the number of artificial data-points $m=\tau\times n$. A log-scale is used for both axis.
      }
      \label{fig:prob_exists} 
\end{figure}

\section{Conclusion\label{sec:Conclusion}}

The three practical conclusions we draw from our results are that: (a) NCE is
as widely applicable as MCMC-MLE (including when the $X_j'$s are generated
using MCMC); (b) NCE and MC-MLE are asymptotically equivalent (as $m\rightarrow
\infty$) when $n$ is fixed; (c) NCE may provide lower-variance estimates than
MC-MLE when $n$ is large (provided that $m=\bigO(n)$).  The variance reduction
seems to be more important when the ratio $\tau = m/n$ is small, or when the
reference distribution (for generating the $X_j$'s) is poorly chosen. 
Note that we proved (c) under the assumption that the $X_j$'s are IID, 
but we conjecture it also holds when they are generated using 
MCMC. Proving this conjecture may be an interesting avenue 
for future research.

As mentioned in the introduction, another advantage of NCE is its ease of 
implementation. In particular, when the considered model is exponential, 
NCE boils down to performing a standard logistic regression. 
For all these reasons, it seems reasonable to recommend NCE as the default
method to perform inference for un-normalised models. 

\section*{Acknowledgements}

The  research  of  the  first  author  is  funded  by  a  GENES  doctoral
scholarship.   The  research  of the  second  author  is  partially  supported
by  a  grant  from  the  French  National  Research  Agency (ANR) as part of
the Investissements d'Avenir program (ANR-11-LABEX-0047).  
We are grateful to Bernard Delyon for
letting us include in the supplement an English translation of 
some technical results (and their proofs) on ergodic processes that he 
derived in lecture notes. 

\appendix

\section{Proofs}

\begin{subsection}{Technical lemmas}
The following lemmas are prerequisites for the proofs of our main theorems.
Most of them are classical results, but for the sake of completeness, we
provide the proofs of these lemmas in the supplement. All these lemma apply 
to a $\refdist$-ergodic sequence of random variables, 
$(X_j)_{j\ge1}$.

First lemma is a slightly disguised version of the law of large numbers, combined with the monotone convergence of a sequence of test functions.

\begin{lem}\label{lem:LLN_monotone}
Let $(f_m)_{m\ge1}$ be a non-decreasing sequence of measurable, non negative real-valued functions converging pointwise towards $f$. Then we have:
\[\frac{1}{m}\sum_{j=1}^mf_{m}(X_j)\underset{m\rightarrow+\infty}{\overset{a.s.}{\longrightarrow}}\mathbb{E}_{\psi}[f(X)] . 
\]
This result holds whether the expectation is finite or infinite.
\end{lem}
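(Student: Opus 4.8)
The plan is to sandwich $\frac1m\sum_{j=1}^m f_m(X_j)$ between averages whose integrand does not depend on $m$, so that the ergodic law of large numbers supplied by (X1) applies directly to each side. Since the integrand $f_m$ changes with $m$, one cannot use (X1) as a black box; the key is that monotonicity lets us replace $f_m$ by a fixed lower bound $f_k$ (for the liminf) and by the fixed upper bound $f$ (for the limsup).

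For the lower bound, I would fix integers $k$ and $N$. Monotonicity of $(f_m)_{m\ge1}$ gives $f_m\ge f_k\ge f_k\wedge N$ for every $m\ge k$, hence $\frac1m\sum_{j=1}^m f_m(X_j)\ge \frac1m\sum_{j=1}^m (f_k\wedge N)(X_j)$ for all large $m$. Since $f_k\wedge N$ is bounded and $\refdist$ is a probability measure, $f_k\wedge N$ is $\refdist$-integrable, so (X1) gives $\frac1m\sum_{j=1}^m (f_k\wedge N)(X_j)\to\mathbb{E}_{\psi}[f_k(X)\wedge N]$ almost surely. I would then take $\liminf_{m\to\infty}$, and let $N\to\infty$ followed by $k\to\infty$, applying monotone convergence twice ($\mathbb{E}_{\psi}[f_k\wedge N]\uparrow\mathbb{E}_{\psi}[f_k]\uparrow\mathbb{E}_{\psi}[f]$) to conclude that $\liminf_{m\to\infty}\frac1m\sum_{j=1}^m f_m(X_j)\ge\mathbb{E}_{\psi}[f(X)]$ almost surely. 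This already settles the case $\mathbb{E}_{\psi}[f(X)]=+\infty$.

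For the upper bound (needed only when $\mathbb{E}_{\psi}[f(X)]<+\infty$), I would use $f_m\le f$ pointwise, so that $\frac1m\sum_{j=1}^m f_m(X_j)\le\frac1m\sum_{j=1}^m f(X_j)$, and apply (X1) to the integrable function $f$ to get $\limsup_{m\to\infty}\frac1m\sum_{j=1}^m f_m(X_j)\le\mathbb{E}_{\psi}[f(X)]$ almost surely. Combining the two bounds yields the claim. The one piece of bookkeeping is that the lower-bound argument invokes (X1) for the countable family $\{f_k\wedge N:k,N\in\mathbb{N}\}$ and the upper bound for $f$, so the exceptional event is a countable union of $\refdist$-null sets and hence null.

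I do not anticipate a genuine obstacle here; the only non-routine point is the infinite-expectation case, which is exactly what the truncation by $N$ in the lower-bound step handles, the remainder being a careful interleaving of the ergodic LLN with monotone convergence.
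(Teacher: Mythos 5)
Your proof is correct and follows essentially the same sandwich argument as the paper: bound $\frac1m\sum_{j}f_m(X_j)$ below by the average of a fixed $f_k$ (valid for $m\ge k$ by monotonicity) and above by the average of $f$, apply the ergodic law of large numbers to each side, and pass to the limit via monotone convergence. The only difference is your extra truncation $f_k\wedge N$, which lets you invoke (X1) exactly as stated (for integrable test functions) even when $\mathbb{E}_{\psi}[f_k(X)]=+\infty$, whereas the paper directly asserts the law of large numbers for non-negative functions with possibly infinite expectation; your version is, if anything, slightly more careful on that point.
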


Second lemma is a natural generalisation of Lemma \ref{lem:LLN_monotone} to dominated convergence.

\begin{lem}\label{lem:LLN_dominated}
Let $(f_m)_{m\ge1}$, $f$ and $g$ be measurable, real-valued functions, such that $(f_m)_{m\ge1}$ converges pointwise towards $f$; for any $m\ge1$, $|f_m|\le g$ ; and $\mathbb{E}_{\psi}[g(X)]<+\infty$. Then we have:
\[\frac{1}{m}\sum_{j=1}^mf_{m}(X_j)\underset{m\rightarrow+\infty}{\overset{a.s.}{\longrightarrow}}\mathbb{E}_{\psi}[f(X)].
\]
\end{lem}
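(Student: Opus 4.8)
The plan is to reduce this dominated-convergence statement to the monotone-convergence statement of Lemma \ref{lem:LLN_monotone}, using the classical device of controlling the tail of the pointwise convergence. Since $g$ is real-valued and $|f_m|\le g$ for every $m\ge1$, the pointwise limit satisfies $|f|\le g$ as well, so $f$, each $f_m$, and $g$ are all $\mathbb{P}_\psi$-integrable. For each $m\ge1$ set
\[
r_m(x)=\sup_{k\ge m}|f_k(x)-f(x)| .
\]
Each $r_m$ is measurable (a countable supremum of measurable functions), the sequence $(r_m)_{m\ge1}$ is non-increasing, one has $r_m\le 2g$ everywhere, and $r_m\to 0$ pointwise because $f_m\to f$ pointwise.

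Next I would apply Lemma \ref{lem:LLN_monotone} to the sequence $2g-r_m$, which is non-negative, non-decreasing in $m$, and converges pointwise to $2g$; this gives $\frac1m\sum_{j=1}^m(2g-r_m)(X_j)\to\mathbb{E}_\psi[2g(X)]$ almost surely, the limit being finite by hypothesis. Since also $\frac1m\sum_{j=1}^m 2g(X_j)\to\mathbb{E}_\psi[2g(X)]$ almost surely, by the ergodic law of large numbers of assumption (X1) applied to the integrable function $2g$, subtracting the two convergences shows that $\frac1m\sum_{j=1}^m r_m(X_j)\to 0$ almost surely.

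Finally, for every $m$ the triangle inequality yields
\[
\Bigl|\frac1m\sum_{j=1}^m f_m(X_j)-\frac1m\sum_{j=1}^m f(X_j)\Bigr|\le\frac1m\sum_{j=1}^m\bigl|f_m(X_j)-f(X_j)\bigr|\le\frac1m\sum_{j=1}^m r_m(X_j),
\]
and the right-hand side tends to $0$ almost surely by the previous step, while $\frac1m\sum_{j=1}^m f(X_j)\to\mathbb{E}_\psi[f(X)]$ almost surely by the ergodic law of large numbers applied to the integrable function $f$. Combining these two facts gives $\frac1m\sum_{j=1}^m f_m(X_j)\to\mathbb{E}_\psi[f(X)]$ almost surely, which is the claim.

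There is no genuine obstacle here: the only idea that is not pure bookkeeping is the introduction of the tail supremum $r_m$, which is exactly what converts the domination hypothesis into the monotone hypothesis required by Lemma \ref{lem:LLN_monotone}; everything else follows from the triangle inequality and the ordinary ergodic theorem applied to the fixed integrable functions $f$ and $2g$.
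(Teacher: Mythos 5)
Your proof is correct and follows essentially the same route as the paper's: you introduce the tail supremum $\sup_{k\ge m}|f_k-f|$, apply Lemma \ref{lem:LLN_monotone} to the non-decreasing non-negative sequence $2g-\sup_{k\ge m}|f_k-f|$, subtract from the ordinary ergodic law of large numbers for $2g$, and finish with the triangle inequality and the law of large numbers for $f$. This is exactly the paper's argument.
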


Third lemma is a generalisation of Lemma \ref{lem:LLN_monotone} to the
degenerate case where the expectation is infinite. In that case, Lemma
\ref{lem:LLN_infinite} shows that the monotonicity assumption is unnecessary.

\begin{lem}\label{lem:LLN_infinite}
Let $(f_m)_{m\ge1}$, $f$ and $g$ be measurable, real-valued functions, such
that $(f_m)_{m\ge1}$ converges pointwise towards $f$; $g$ is non negative,
$\mathbb{E}_{\psi}[g(X)]<+\infty$; for any $m\ge1$, $f_m\le g$; and
$\mathbb{E}_{\psi}[f(X)_{-}]=+\infty$ where $f_{-}$ stands for the negative
part of $f$. Then we have:
\[\frac{1}{m}\sum_{j=1}^mf_{m}(X_j)\underset{m\rightarrow+\infty}{\overset{a.s.}{\longrightarrow}}-\infty.
\]
\end{lem}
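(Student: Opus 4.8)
The plan is to reduce the degenerate (infinite-expectation) case to the dominated-convergence regime of Lemma \ref{lem:LLN_dominated} by truncating the functions from below, and then to let the truncation level grow. The obstruction to invoking the ergodic law of large numbers directly is that $f$ need not be $\refdist$-integrable: by hypothesis $\mathbb{E}_{\psi}[f(X)_-]=+\infty$, so the partial sums cannot converge to a finite expectation. Because the $f_m$ are moreover not assumed monotone in $m$, Lemma \ref{lem:LLN_monotone} cannot be applied either; the natural route is through Lemma \ref{lem:LLN_dominated}, after clipping $f_m$ from below at a fixed level $-K$ to restore integrability. The truncated averages will converge almost surely, will dominate the untruncated ones, and can then be driven to $-\infty$ as $K\to\infty$.

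First I would record that $f$ has a well-defined (if infinite) integral. Passing to the limit in $f_m\le g$ gives $f\le g$ pointwise, and since $g\ge0$ this forces $f_+\le g$, whence $\mathbb{E}_{\psi}[f(X)_+]\le\mathbb{E}_{\psi}[g(X)]<+\infty$. Combined with the hypothesis $\mathbb{E}_{\psi}[f(X)_-]=+\infty$, this shows $\mathbb{E}_{\psi}[f(X)]=-\infty$.

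Next, for each fixed $K>0$, set $f_m^{(K)}=\max(f_m,-K)$ and $f^{(K)}=\max(f,-K)$. Then $f_m^{(K)}\to f^{(K)}$ pointwise, and because $-K\le f_m^{(K)}\le g$ we have the uniform domination $|f_m^{(K)}|\le\max(K,g)$, which is $\refdist$-integrable. Lemma \ref{lem:LLN_dominated} then applies and gives
\[
\frac{1}{m}\sum_{j=1}^m f_m^{(K)}(X_j)\overset{a.s.}{\longrightarrow}\mathbb{E}_{\psi}[f^{(K)}(X)].
\]
Since clipping from below can only raise values, $f_m\le f_m^{(K)}$ pointwise, so the corresponding averages are ordered and almost surely
\[
\limsup_{m\to\infty}\frac{1}{m}\sum_{j=1}^m f_m(X_j)\le\mathbb{E}_{\psi}[f^{(K)}(X)].
\]

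Finally I would let $K\to\infty$ along the integers. As $K$ increases, $f^{(K)}=\max(f,-K)$ decreases to $f$ while staying below the integrable function $g$, so monotone convergence yields $\mathbb{E}_{\psi}[f^{(K)}(X)]\downarrow\mathbb{E}_{\psi}[f(X)]=-\infty$. The one point requiring care is the interchange of the two limits: the almost sure convergence from Lemma \ref{lem:LLN_dominated} holds for each $K$ only outside its own null set, so I must pass to the countable intersection of the almost sure events indexed by $K\in\mathbb{N}$, which remains almost sure. On that event the displayed $\limsup$ bound holds simultaneously for every $K$, forcing $\limsup_m\frac{1}{m}\sum_{j=1}^m f_m(X_j)\le\inf_K\mathbb{E}_{\psi}[f^{(K)}(X)]=-\infty$, which is the claim. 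This handling of the double limit is the main, though modest, obstacle.
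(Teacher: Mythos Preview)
Your proof is correct, but it takes a different route from the paper's. The paper works with the non-negative sequence $h_m = g - \sup_{k\ge m} f_k$, which is monotone increasing in $m$ and converges pointwise to $g-f$; Lemma \ref{lem:LLN_monotone} applied to $h_m$ then gives $\frac{1}{m}\sum_j h_m(X_j)\to\mathbb{E}_\psi[g-f]=+\infty$ directly, and the bound $g - f_m \ge h_m$ finishes the argument in one stroke. Your approach instead truncates from below at level $-K$, invokes Lemma \ref{lem:LLN_dominated} on the truncated sequence, and then lets $K\to\infty$ over the integers. Both are valid; the paper's route avoids the auxiliary parameter $K$ and the countable-intersection bookkeeping by exploiting the monotone structure of $\sup_{k\ge m} f_k$, while your truncation argument is perhaps the more routine device and has the merit of not requiring one to spot that monotone structure.
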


Fourth lemma is a uniform law of large numbers. It is well known in the IID
case. This result does not actually require the independence assumption. We
present a generalisation of this result to ergodic processes. The proof is due
to Bernard Delyon, who made it available in an unpublished course in French (\citet{delyon}). We
present an English translation of the proof in the supplement.

\begin{lem} Let $K$ a compact subset of $\mathbb{R}^{d}$; $(\theta,x)\mapsto
	\varphi(\theta,x)$ a measurable function defined on
	$K\times\mathcal{X}$ whose values lie on $\mathbb{R}^p$; and suppose
	that the maps $\theta\mapsto \varphi(\theta,x)$ are continuous for
	$\refdist$-almost every $x$. Moreover, suppose that
\[\mathbb{E}_{\psi}\bigg[\underset{\theta\in K}{\text{sup}}\ \|\varphi(\theta,X)\|\bigg]<+\infty.\]
Then the function $\theta\mapsto\mathbb{E}_{\psi}\big[\varphi(\theta,X)\big]$ defined on $K$ is continuous, and we have
\[\underset{\theta\in K}{\text{sup}}\ \left\|\frac{1}{m}\sum_{j=1}^m\varphi(\theta,X_j)-\mathbb{E}_{\psi}\left[\varphi(\theta,X)\right]\right\|\overset{a.s.}{\underset{m\rightarrow+\infty}{\longrightarrow}}0.\]
Consequently, if there is a random sequence $(\widetilde{\theta}_m)_{m\ge1}$ converging almost surely to some parameter $\widetilde{\theta}\in \Theta$. Then we have
$$ \left\|\frac{1}{m}\sum_{j=1}^m\varphi(\widetilde{\theta}_m,X_j)-\mathbb{E}_{\psi}\left[\varphi(\widetilde{\theta},X)\right]\right\|\cvm0\quad\as$$ \label{lem:ULLN_theorem}
\end{lem}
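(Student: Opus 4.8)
\emph{Approach.} The plan is to run the classical ``finite cover plus modulus of continuity'' proof of a uniform law of large numbers, the only change being that the ordinary strong law is replaced everywhere by the ergodic law of large numbers supplied by assumption (X1). I would first dispose of the continuity of $\theta\mapsto\mathbb{E}_{\psi}[\varphi(\theta,X)]$ on $K$: this is immediate from dominated convergence, using the $\refdist$-integrable envelope $g(x):=\sup_{\theta\in K}\|\varphi(\theta,x)\|$ and the $\refdist$-a.s. continuity of $\theta\mapsto\varphi(\theta,x)$.

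For the uniform convergence, fix $\varepsilon>0$. For $\theta_{0}\in K$ and $\rho>0$, introduce the modulus $\omega(\theta_{0},\rho,x):=\sup\{\|\varphi(\theta,x)-\varphi(\theta_{0},x)\|:\theta\in K,\ \|\theta-\theta_{0}\|\le\rho\}$, which is measurable because $K$ is separable and $\varphi(\cdot,x)$ is a.s. continuous, so the supremum reduces to a countable dense subset of $K$. As $\rho\downarrow0$ we have $\omega(\theta_{0},\rho,x)\to0$ for $\refdist$-a.e. $x$, with $\omega(\theta_{0},\rho,x)\le2g(x)$, so $\mathbb{E}_{\psi}[\omega(\theta_{0},\rho,X)]\to0$ by dominated convergence; pick $\rho(\theta_{0})>0$ with $\mathbb{E}_{\psi}[\omega(\theta_{0},\rho(\theta_{0}),X)]\le\varepsilon$. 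By compactness of $K$ extract a finite subcover $K\subseteq\bigcup_{k=1}^{N}B(\theta_{k},\rho_{k})$, $\rho_{k}=\rho(\theta_{k})$. For $\theta$ in the $k$-th ball, a triangle inequality bounds $\|\frac{1}{m}\sum_{j}\varphi(\theta,X_{j})-\mathbb{E}_{\psi}[\varphi(\theta,X)]\|$ by $\|\frac{1}{m}\sum_{j}\varphi(\theta_{k},X_{j})-\mathbb{E}_{\psi}[\varphi(\theta_{k},X)]\|+\frac{1}{m}\sum_{j}\omega(\theta_{k},\rho_{k},X_{j})+\mathbb{E}_{\psi}[\omega(\theta_{k},\rho_{k},X)]$. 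Applying (X1) to the finitely many integrable scalar functions $\varphi(\theta_{k},\cdot)$ (coordinatewise) and $\omega(\theta_{k},\rho_{k},\cdot)$, almost surely the first term tends to $0$ and the second to $\mathbb{E}_{\psi}[\omega(\theta_{k},\rho_{k},X)]\le\varepsilon$ for every $k=1,\dots,N$; hence a.s. $\limsup_{m}\sup_{\theta\in K}\|\frac{1}{m}\sum_{j}\varphi(\theta,X_{j})-\mathbb{E}_{\psi}[\varphi(\theta,X)]\|\le2\varepsilon$. Intersecting the corresponding probability-one events over $\varepsilon=1/\ell$, $\ell\ge1$, gives the uniform law. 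The final claim then follows by writing $\|\frac{1}{m}\sum_{j}\varphi(\widetilde\theta_{m},X_{j})-\mathbb{E}_{\psi}[\varphi(\widetilde\theta,X)]\|\le\sup_{\theta\in K}\|\frac{1}{m}\sum_{j}\varphi(\theta,X_{j})-\mathbb{E}_{\psi}[\varphi(\theta,X)]\|+\|\mathbb{E}_{\psi}[\varphi(\widetilde\theta_{m},X)]-\mathbb{E}_{\psi}[\varphi(\widetilde\theta,X)]\|$ (since $\widetilde\theta_{m}$ takes values in $K$ and $K$ is closed, $\widetilde\theta\in K$): the first term vanishes a.s. by the uniform law, the second a.s. by the continuity established above.

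The only points requiring care are purely technical: the measurability of $\omega(\theta_{0},\rho,\cdot)$, handled by the separability of $K$, and checking that $\omega(\theta_{k},\rho_{k},\cdot)$ and the coordinates of $\varphi(\theta_{k},\cdot)$ are $\refdist$-integrable so that (X1) applies — both immediate from the envelope hypothesis $\mathbb{E}_{\psi}[\sup_{\theta\in K}\|\varphi(\theta,X)\|]<+\infty$. No step presents a genuine obstacle; indeed the independence of the $X_{j}$'s is used nowhere beyond the law of large numbers, which is exactly why the statement survives the passage from IID to merely ergodic sequences.
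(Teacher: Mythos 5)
Your proof is correct and follows essentially the same route as the paper's: continuity of $\theta\mapsto\mathbb{E}_{\psi}[\varphi(\theta,X)]$ by dominated convergence with the envelope, then the standard finite-cover/modulus-of-continuity decomposition into three terms, each controlled by the ergodic law of large numbers applied to finitely many integrable functions, yielding a $2\varepsilon$ bound on the $\limsup$; the final claim is obtained the same way (the paper applies the uniform law to the closure of a small ball around $\widetilde{\theta}$ rather than assuming $\widetilde{\theta}_m\in K$, but this is immaterial). No gaps.
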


Fifth lemma is also a well known result. It is often used to prove the weak convergence (usually asymptotic normality) of Z-estimators.

\begin{lem}
Define any probability space $(\Omega,\mathcal{F},\mathbb{P})$, and let $(\ln(\theta,\omega))_{n\ge1}$ be measurable real-valued functions defined on $\mathbb{R}^d\times\Omega$. Let $\theta^{\star}\in\mathbb{R}^d$ and $\varepsilon>0$ such that for any $n\ge1$ and for $\mathbb{P}$-almost every $\omega\in\Omega$ the map $\theta\mapsto \ln(\theta,\omega)$ is $C^2$ on $B(\theta^{\star},\varepsilon)$. Let $(\widehat{\theta}_n)_{n\ge1}$ be a random sequence converging in probability to $\theta^{\star}$. Suppose also that:

\begin{description}
\item [{(a)}] $\{\nabla_{\theta}\ln(\theta)\}_{|_{\theta=\widehat{\theta}_n}}=o_{\mathbb{P}}(n^{-1/2})$,
\item [{(b)}] $\underset{\theta\in B(\theta^{\star},\varepsilon)}{\sup}\|\nabla_{\theta}^2 \ln(\theta)-\mathcal{H}(\theta)\|\overset{\mathbb{P}}{\longrightarrow}0$, for some $\mathbb{R}^{d\times d}$ valued function $\mathcal{H}$ continuous at $\theta^{\star}$, such that $\mathcal{H}(\theta^{\star})$ is full rank,
\item [{(c)}] $\sqrt{n}\{\nabla_{\theta}\ln(\theta)\}_{|_{\theta=\theta^{\star}}}\cvd Z$, for some random vector $Z$.
\end{description}

Then
$$\sqrt{n}(\widehat{\theta}_n-\theta^{\star})+\mathcal{H}(\theta^{\star})^{-1}\sqrt{n}\{\nabla_{\theta}\ln(\theta)\}_{|_{\theta=\theta^{\star}}}\overset{\mathbb{P}}{\longrightarrow}0_{\mathbb{R}^d},$$
and, consequently
$$\sqrt{n}(\widehat{\theta}_n-\theta^{\star})\cvd-\mathcal{H}(\theta^{\star})^{-1}Z.$$\label{lem:asymptotic_normality}
\end{lem}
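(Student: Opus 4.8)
The plan is to carry out the classical one-term Taylor expansion of the gradient that underlies asymptotic normality of Z-estimators, paying attention to the fact that $\ln$ is only assumed $C^{2}$ on $B(\theta^{\star},\varepsilon)$. First I would localise: let $A_n=\{\widehat{\theta}_n\in B(\theta^{\star},\varepsilon)\}$; since $\widehat{\theta}_n\overset{\mathbb{P}}{\longrightarrow}\theta^{\star}$ we have $\mathbb{P}(A_n)\to1$, so it is enough to establish every convergence statement on $A_n$ (setting the random quantities below equal to $0$ on $A_n^{c}$, which does not affect convergence in probability). On $A_n$ the whole segment $\{\theta^{\star}+t(\widehat{\theta}_n-\theta^{\star}):t\in[0,1]\}$ lies in $B(\theta^{\star},\varepsilon)$, so the integral form of Taylor's theorem applies coordinatewise to $\nabla_\theta\ln$ and gives
\[
\nabla_\theta\ln(\widehat{\theta}_n)-\nabla_\theta\ln(\theta^{\star})=\overline{\mathbf{H}}_n\,(\widehat{\theta}_n-\theta^{\star}),\qquad\overline{\mathbf{H}}_n:=\int_0^1\nabla_\theta^2\ln\big(\theta^{\star}+t(\widehat{\theta}_n-\theta^{\star})\big)\,dt ,
\]
an expression that is jointly measurable and sidesteps having to pick a separate mean-value point for each component.

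The next step is to show $\overline{\mathbf{H}}_n\overset{\mathbb{P}}{\longrightarrow}\mathcal{H}(\theta^{\star})$. On $A_n$ one has the deterministic bound
\[
\big\|\overline{\mathbf{H}}_n-\mathcal{H}(\theta^{\star})\big\|\le\underset{\theta\in B(\theta^{\star},\varepsilon)}{\sup}\big\|\nabla_\theta^2\ln(\theta)-\mathcal{H}(\theta)\big\|+\underset{\|\theta-\theta^{\star}\|\le\|\widehat{\theta}_n-\theta^{\star}\|}{\sup}\big\|\mathcal{H}(\theta)-\mathcal{H}(\theta^{\star})\big\| ;
\]
the first term is $\smalloP(1)$ by (b), and the second is $\smalloP(1)$ because $\widehat{\theta}_n\overset{\mathbb{P}}{\longrightarrow}\theta^{\star}$ and $\mathcal{H}$ is continuous at $\theta^{\star}$. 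Since $\mathcal{H}(\theta^{\star})$ is invertible, continuity of the determinant and of matrix inversion then yield that, with probability tending to one, $\overline{\mathbf{H}}_n$ is invertible and $\overline{\mathbf{H}}_n^{-1}\overset{\mathbb{P}}{\longrightarrow}\mathcal{H}(\theta^{\star})^{-1}$; in particular $\overline{\mathbf{H}}_n^{-1}=\bigOP(1)$.

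Finally I would insert assumption (a), $\nabla_\theta\ln(\widehat{\theta}_n)=\smalloP(n^{-1/2})$, into the Taylor identity to obtain, on $A_n$,
\[
\sqrt{n}(\widehat{\theta}_n-\theta^{\star})=\overline{\mathbf{H}}_n^{-1}\Big(\smalloP(1)-\sqrt{n}\,\nabla_\theta\ln(\theta^{\star})\Big),
\]
whence
\[
\sqrt{n}(\widehat{\theta}_n-\theta^{\star})+\mathcal{H}(\theta^{\star})^{-1}\sqrt{n}\,\nabla_\theta\ln(\theta^{\star})=\big(\overline{\mathbf{H}}_n^{-1}-\mathcal{H}(\theta^{\star})^{-1}\big)\big(-\sqrt{n}\,\nabla_\theta\ln(\theta^{\star})\big)+\overline{\mathbf{H}}_n^{-1}\smalloP(1).
\]
By (c), $\sqrt{n}\,\nabla_\theta\ln(\theta^{\star})\cvd Z$, hence it is $\bigOP(1)$; together with $\overline{\mathbf{H}}_n^{-1}-\mathcal{H}(\theta^{\star})^{-1}=\smalloP(1)$ and $\overline{\mathbf{H}}_n^{-1}=\bigOP(1)$, the right-hand side is $\smalloP(1)$, which is the first claimed limit. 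The second assertion then follows from the continuous mapping theorem (so that $-\mathcal{H}(\theta^{\star})^{-1}\sqrt{n}\,\nabla_\theta\ln(\theta^{\star})\cvd-\mathcal{H}(\theta^{\star})^{-1}Z$) combined with Slutsky's lemma. The only genuinely delicate point is the bookkeeping around the event $A_n$ — ensuring $\overline{\mathbf{H}}_n$ is measurable and that the $C^{2}$ hypothesis is invoked only where it is assumed to hold; everything else is routine manipulation of the $\smalloP$/$\bigOP$ symbols, and I anticipate no further obstacle.
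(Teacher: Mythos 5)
Your proof is correct and follows essentially the same route as the paper's: Taylor-expand the gradient around $\theta^{\star}$, show the averaged Hessian converges in probability to $\mathcal{H}(\theta^{\star})$ via (b) and the continuity of $\mathcal{H}$, invert on an event of probability tending to one, and rearrange with Slutsky. The only (minor) difference is that you use the integral form of Taylor's theorem, yielding a single matrix $\overline{\mathbf{H}}_n$, whereas the paper applies the Lagrange form coordinatewise with a separate mean-value point per component; your variant is marginally cleaner on measurability but the argument is otherwise identical.
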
 

Sixth lemma is a technical tool required for proving asymptotic normality of NCE. It is particularly straightforward to prove in the IID case. We present a generalisation of this result to reversible, geometrically ergodic Markov chains.

\begin{lem}
 Assume that (X2) holds. Let $(f_n)_{n\ge1}$, $f$ and $g$ be measurable, real-valued functions, such that $(f_n)_{n\ge1}$ converges pointwise towards $f$; for any $n\ge1$, $|f_n|\le g$; and $\mathbb{E}_{\psi}[g(X)^2]<\infty$. Then we have
$$\sqrt{n}\left(\frac{1}{n}\sum_{i=1}^n\big\{f_n(X_i)-f(X_i)\big\}-\mathbb{E}\big[f_n(X)-f(X)\big]\right)\overset{\mathbb{P}}{\longrightarrow}0,$$
and, consequently
$$\sqrt{n}\left(\frac{1}{n}\sum_{i=1}^nf_n(X_i)-\mathbb{E}[f_n(X)]\right)\cvd\mathcal{N}\big(0,\sigma_f^2),$$
where $\sigma_f^2=\mathbb{V}_{\psi}(f(X))+2\sum_{i=1}^{+\infty}\mathrm{Cov}(f(X_0),f(X_i))<+\infty$.\label{lem:CLT_dominated}
\end{lem}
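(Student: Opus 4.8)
The plan is to separate what is genuinely new here — the Markov dependence and the triangular array $(f_n)$ — from the routine bookkeeping, and to reduce everything to a single second-moment estimate based on the $\mathbb{L}_2$ spectral gap of a reversible, geometrically ergodic chain. (Throughout, $\mathbb{E}$ denotes expectation under $\refdist$, consistently with the definition of $\sigma_f^2$.) First I would set $r_n = f_n - f$; since $|f| = \lim_n|f_n|\le g$ pointwise, we have $|r_n|\le 2g$ with $r_n\to0$ pointwise, while $g\in\mathbb{L}_1(\refdist)$ and $g^2\in\mathbb{L}_1(\refdist)$, so two applications of dominated convergence give $\mathbb{E}_{\psi}[r_n(X)]\to0$ and $\delta_n:=\mathbb{E}_{\psi}[r_n(X)^2]\to0$. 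The first conclusion of the lemma is precisely
\[
\sqrt n\Big(\tfrac1n\textstyle\sum_{i=1}^n r_n(X_i)-\mathbb{E}_{\psi}[r_n(X)]\Big)\overset{\mathbb{P}}{\longrightarrow}0 ,
\]
and the second one follows from it: I would write $\frac1n\sum_i f_n(X_i)-\mathbb{E}_{\psi}[f_n(X)]$ as the sum of $\frac1n\sum_i r_n(X_i)-\mathbb{E}_{\psi}[r_n(X)]$ and $\frac1n\sum_i f(X_i)-\mathbb{E}_{\psi}[f(X)]$, multiply by $\sqrt n$, apply the first conclusion to the former and the CLT \eqref{eq:clt} with $\varphi=f$ (legitimate since $f\in\mathbb{L}_2(\refdist)$) to the latter, and conclude by Slutsky's lemma; the value $\sigma_f^2<\infty$ will come from the covariance bound below.

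Next I would establish the displayed claim first for a chain started from its stationary law $\refdist$. Reversibility together with geometric ergodicity gives a spectral gap on $\mathbb{L}_2(\refdist)$ (\citet{gareth_rosenthal}): there is $\lambda\in(0,1)$, not depending on $n$, such that $|\mathrm{Cov}_{\psi}(\phi(X_0),\phi(X_i))|\le\lambda^i\,\mathbb{V}_{\psi}(\phi(X))$ for every $i\ge1$ and every $\phi\in\mathbb{L}_2(\refdist)$. Taking $\phi=r_n$ and summing over lags,
\[
\frac1n\,\mathbb{V}_{\psi}\!\Big(\sum_{i=1}^n r_n(X_i)\Big)\;\le\;\frac{1+\lambda}{1-\lambda}\,\mathbb{V}_{\psi}(r_n(X))\;\le\;\frac{1+\lambda}{1-\lambda}\,\delta_n\;\longrightarrow\;0,
\]
so the centred, $n^{-1/2}$-scaled sum tends to $0$ in $\mathbb{L}_2(\mathbb{P}_{\psi})$, hence in probability. (One could instead solve the Poisson equation $\widehat r_n - P\widehat r_n = r_n-\mathbb{E}_{\psi}[r_n]$, the gap bounding $\|\widehat r_n\|_{\mathbb{L}_2(\refdist)}$ by $(1-\lambda)^{-1}\delta_n^{1/2}$, and control the telescoping boundary terms and the resulting martingale; that would be the route if reversibility were relaxed to an $\mathbb{L}_{2+\delta}$ condition.) The same bound with $\phi=f$ gives $\sum_{i\ge1}|\mathrm{Cov}_{\psi}(f(X_0),f(X_i))|<\infty$, i.e. $\sigma_f^2<\infty$.

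It remains to drop the stationarity assumption, and this is the delicate point. For fixed $L$, $\frac{1}{\sqrt n}\sum_{i=1}^L\{r_n(X_i)-\mathbb{E}_{\psi}[r_n(X)]\}\to0$ almost surely (a finite sum of terms each tending to $0$ by the pointwise convergence of $r_n$), whatever the initial law, so it suffices to control the block $\frac{1}{\sqrt n}\sum_{i=L+1}^n\{\cdots\}$. Here I would use the split-chain / regeneration decomposition available under Harris recurrence: the first, incomplete regeneration cycle contributes at most $\frac{1}{\sqrt n}\sum_{i=1}^{\tau_1}2g(X_i)$, an almost surely finite random variable (independent of $n$) divided by $\sqrt n$, hence negligible, while over the complete cycles — which are i.i.d., with lengths having exponential moments by geometric ergodicity — the problem reduces to the stationary second-moment estimate above; equivalently, one invokes the general fact that limit theorems for additive functionals of Harris chains are insensitive to the initial distribution (\citet{gareth_rosenthal}, Proposition 29 and its proof). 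I expect this transfer to be the main obstacle: because $(r_n)$ is a genuine triangular array and only $\mathbb{E}_{\psi}[g(X)^2]<\infty$ is assumed, one must verify that all per-cycle (and initial-segment) bounds are uniform in $n$ — which they are, precisely because $\lambda$, the regeneration structure, and the dominating function $g$ do not depend on $n$.
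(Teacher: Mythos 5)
Your proof is correct and follows essentially the same route as the paper's: set $r_n=f_n-f$, use dominated convergence to get $\mathbb{E}_{\psi}[r_n^2]\to0$, bound $n^{-1}\mathbb{V}_{\psi}\big(\sum_{i\le n}r_n(X_i)\big)$ by a summable exponential decay of correlations (the paper quotes the $\rho$-mixing bound $\rho(i)=\bigO(e^{-\gamma i})$ from the Jones survey, which for a reversible geometrically ergodic chain is the same fact as your $L^2$ spectral-gap estimate), conclude by Chebyshev, and finish with the CLT for $f$ plus Slutsky. Your additional regeneration discussion of the non-stationary initial distribution is more careful than the paper, which performs the variance computation at stationarity and relies on the Harris-recurrence remark made in the main text.
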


\end{subsection}

\subsection{Proof of Theorem \ref{thm:nce_consistency}\label{subsec:Proof-consistent}}
A standard approach to establish consistency of M-estimators is to prove some Glivenko-Cantelli
result (uniform convergence), but, to the best of our knowledge, no
such result exists under the general assumption that the underlying
random variables (the $X_{j}$'s in our case) are generated from an ergodic process.
Instead, we follow \citet{geyer_1994}'s approach, which relies on establishing that function $-\lNCE$
epiconverges to $-\ell_{n}$. Epiconvergence is essentially the most general notion
of convergence for functions that ensures the convergence of minimisers;
for a succint introduction to epiconvergence, see Appendix A of \citet{geyer_1994}
and Chapter 7 of \citet{variational}.

We follow closely \citet{geyer_1994}. In particular, Theorem 4 of \citet{geyer_1994} shows that: if a sequence of functions $\ell_{n,m}$ hypoconverges to some function $\ell_{n}$ which has a unique maximiser $\mle$ and if a random sequence $(\widehat{\theta}_{n,m})_{m\ge1}$ is an approximate maximiser of $\ell_{n,m}$ which belongs to a compact set almost surely, then $\widehat{\theta}_{n,m}$ converges to $\mle$ almost surely. Consequently, to prove Theorem \ref{thm:nce_consistency}, we only have to prove that
$\lNCE$ hypoconverges to $\ell_{n}$ (i.e. that $-\lNCE$ epiconverges
to $-\ell_{n}$); that is 
\begin{equation}
    \ell_{n}(\theta,\nu)\le\underset{B\in\mathcal{N}(\theta,\nu)}{\text{inf}}\underset{m\rightarrow+\infty}{\text{lim inf}}\underset{(\phi,\mu)\in B}{\text{sup}}\left\{ \lNCE(\phi,\mu)\right\} \label{inequality_1_geyer}
\end{equation}
\begin{equation}
\ell_{n}(\theta,\nu)\ge\underset{B\in\mathcal{N}(\theta,\nu)}{\text{inf}}\underset{m\rightarrow+\infty}{\text{lim sup}}\underset{(\phi,\mu)\in B}{\text{sup}}\left\{ \lNCE(\phi,\mu)\right\} \label{inequality_2_geyer}
\end{equation}
where $\mathcal{N}(\theta,\nu)$ denotes the set of neighborhoods
of the point $(\theta,\nu)$. 

Since $\Xi=\Theta\times\mathbb{R}$ is a separable metric space, there
exists a countable base $\mathcal{B}=\{B_{1},B_{2},...\}$ for the
considered topology. For any point $(\theta,\nu)$ define the countable
base of neighborhoods $\mathcal{N}_{c}(\theta,\nu)=\mathcal{B}\cap\mathcal{N}(\theta,\nu)$
which can replace $\mathcal{N}(\theta,\nu)$ in the infima of the
preceding inequalities. Choose a countable dense subset $\Gamma_{c}=\{(\theta_{1},\nu_{1}),(\theta_{2},\nu_{2}),...\}$
as follows. For each $k$ let $(\theta_{k},\nu_{k})$ be a point of
$B_{k}$ such that: 
\[
\ell_{n}(\theta_{k},\nu_{k})\ge\underset{(\phi,\mu)\in B_{k}}{\text{sup}}\left\{ \ell_{n}(\phi,\mu)\right\} -\frac{1}{k}.
\]

The proof is very similar to Theorem 1 of \citet{geyer_1994}. However, in this slightly different proof, we will need 

\begin{equation}
    \underset{m\rightarrow+\infty}{\text{lim}}\ \left[\frac{1}{m}\sum_{j=1}^{m}\log\left\{\left(1+e^{\nu}\frac{nh_{\theta}(X_{j})}{mh_{\psi}(X_{j})}\right)^{\frac{m}{n}}\right\}\right]=\mathbb{E}_{\psi}\left[e^{\nu}\frac{h_{\theta}(X)}{h_{\psi}(X)}\right]=e^{\nu}\frac{\mathcal{Z}(\theta)}{\mathcal{Z}(\psi)}\label{simultaneous_limit_1}
\end{equation}
and 
\begin{equation}
\underset{m\rightarrow+\infty}{\text{lim}}\ \frac{1}{m}\sum_{j=1}^{m}\log\left(1+\frac{n}{m}\underset{(\phi,\mu)\in B}{\text{inf}}\ \left[e^{\mu}\frac{h_{\phi}(X_{j})}{h_{\psi}(X_{j})}\right]\right)^{\frac{m}{n}}=\mathbb{E}_{\psi}\left[\underset{(\phi,\mu)\in B}{\text{inf}}\ \left\{ e^{\mu}\frac{h_{\phi}(X)}{h_{\psi}(X)}\right\}\right] \label{simultaneous_limit_2}
\end{equation}
to hold simultaneously with probability one for any $(\theta,\nu)\in\Gamma_{c}$
and any $B\in\mathcal{B}$. For any fixed $(\theta,\nu)$, Lemma \ref{lem:LLN_monotone} applies to the maps $x\mapsto(1+\frac{x}{m})^m$, and since any countable union
of null sets is still a null set, convergence holds simultaneously for every element of $\Gamma_c$ and $\mathcal{B}$ with probability one. One may note that infima in the last
equation are measurable under (H1) (in that
case, an infima over any set $B\in\mathcal{B}$ can be replaced by an infima
over the countable dense subset $B\cap\Gamma_{c}$).

Proving inequality \eqref{inequality_1_geyer} is
straightforward: 
\[
\forall B\in\mathcal{B},\ \ \forall(\theta,\nu)\in B\cap\Gamma_{c},\hspace{0.5cm}\ell_{n}(\theta,\nu)=\lim_{m\rightarrow+\infty}\lNCE(\theta,\nu)\le\underset{m\rightarrow+\infty}{\text{lim inf}}\ \underset{(\phi,\mu)\in B}{\text{sup}}\ \left\{ \lNCE(\phi,\mu)\right\} 
\]
and thus
\[
\underset{B\in\mathcal{N}_{c}(\theta,\nu)}{\text{inf}}\ \underset{(\phi,\mu)\in B\cap\Gamma_{c}}{\text{sup}}\ \left\{ \ell_{n}(\phi,\mu)\right\} \le\underset{B\in\mathcal{N}_{c}(\theta,\nu)}{\text{inf}}\ \underset{m\rightarrow+\infty}{\text{lim inf}}\ \underset{(\phi,\mu)\in B}{\text{sup}}\ \left\{ \lNCE(\phi,\mu)\right\} .
\]
\citep{geyer_1994} proved that $\theta\mapsto\mathcal{Z}(\theta)$ is lower semi-continuous (cf Theorem 1).
This result directly implies that $(\theta,\nu)\mapsto\ell_{n}(\theta,\nu)$
is upper semi-continuous as a sum of upper semi-continuous functions.
Thus the left hand side is equal to $l(\theta,\nu)$ by construction
of $\Gamma_{c}$. 

The proof of the second inequality also follows closely \citet{geyer_1994}:
\begin{align*}
\underset{B\in\mathcal{N}(\theta,\nu)}{\text{inf}}\  \underset{m\rightarrow+\infty}{\text{lim sup}}\ \underset{(\phi,\mu)\in B}{\text{sup}}&\left\{ \lNCE(\phi,\mu)\right\} \\ \le&\underset{B\in\mathcal{N}(\theta,\nu)}{\text{inf}}\ \Bigg\{\underset{(\phi,\mu)\in B}{\text{sup}}\ \left[\frac{1}{n}\sum_{i=1}^{n}\log\left\{\frac{h_{\phi}(y_{i})}{h_{\psi}(y_{i})}\right\}+\mu\right]\\
  &-\underset{m\rightarrow+\infty}{\text{lim inf}}\ \underset{(\phi,\mu)\in B}{\text{inf}}\ \left[\frac{1}{n}\sum_{i=1}^{n}\log\left\{1+\frac{n}{m}e^{\mu}\frac{h_{\phi}(y_{i})}{h_{\psi}(y_{i})}\right\}\right]\\
  &-\underset{m\rightarrow+\infty}{\text{lim inf}}\ \frac{1}{m}\sum_{j=1}^{m}\log\left(1+\frac{n}{m}\underset{(\phi,\mu)\in B}{\text{inf}}\ \left[e^{\mu}\frac{h_{\phi}(X_{j})}{h_{\psi}(X_{j})}\right]\right)^{\frac{m}{n}}\Bigg\}\\
 =&\frac{1}{n}\sum_{i=1}^{n}\log\left\{\frac{h_{\theta}(y_{i})}{h_{\psi}(y_{i})}\right\}+\nu-\underset{B\in\mathcal{N}(\theta,\nu)}{\text{sup}}\ \mathbb{E}_{\psi}\left[\underset{(\phi,\mu)\in B}{\text{inf}}\ \left\{ e^{\mu}\frac{h_{\phi}(X)}{h_{\psi}(X)}\right\}\right].
\end{align*}
The inequality follows directly from superadditivity of the supremum (and subadditivity
of the infimum) and the continuity and monotonicity of the maps $x\mapsto\log(1+\frac{n}{m}x)^{\frac{m}{n}}$.
The last equality holds because the infimum over $\mathcal{N}(\theta,\nu)$
can be replaced by the infimum over the countable set $\mathcal{A}_{c}(\theta,\nu)$: the set of open balls centered on $(\theta,\nu)$ of radius $k^{-1}$,
$k\ge1$, which means the infimum is also the limit of a
decreasing sequence, which can be splitted into three terms. The second term converges deterministically to zero, while convergences \eqref{simultaneous_limit_1} and \eqref{simultaneous_limit_2} apply for the first and third terms.

To conclude, apply the monotone convergence
theorem to the remaining term:
\begin{align*}
\underset{B\in\mathcal{A}_{c}(\theta,\nu)}{\text{sup}}\ \mathbb{E}_{\psi}\bigg[\underset{(\phi,\mu)\in B}{\text{inf}}\ \Big\{ e^{\mu}\frac{h_{\phi}(X)}{h_{\psi}(X)}\Big\}\bigg] & =\mathbb{E}_{\psi}\bigg[\underset{B\in\mathcal{A}_{c}(\theta,\nu)}{\text{sup}}\ \underset{(\phi,\mu)\in B}{\text{inf}}\ \Big\{ e^{\mu}\frac{h_{\phi}(X)}{h_{\psi}(X)}\Big\}\bigg]\\
 & =\mathbb{E}_{\psi}\bigg[e^{\nu}\frac{h_{\theta}(X)}{h_{\psi}(X)}\bigg]=e^{\nu}\frac{\mathcal{Z}(\theta)}{\mathcal{Z}(\psi)}.
\end{align*}

\subsection{Proof of Theorem \ref{thm:equivalence_n_fixed}}

Define 
$g_{\xi}(x)=\log h_{\theta}(x)+\nu$, and the following gradients (dropping $n$ and $m$ in the
notation for convenience):
\begin{multline*}
\glNCE(\xi)=\nabla\lNCE(\xi)=\frac{1}{n}\sum_{i=1}^{n}\nabla_{\xi}g_{\xi}(y_{i})\bigg(\frac{mh_{\psi}(y_{i})}{mh_{\psi}(X_{j})+n\exp\{g_{\xi}(y_{i})\}}\bigg)\\
-\frac{1}{m}\sum_{j=1}^{m}\nabla_{\xi}g_{\xi}(X_{j})\bigg(\frac{m\exp\{g_{\xi}(X_{j})\}}{mh_{\psi}(X_{j})+n\exp\{g_{\xi}(X_{j})\}}\bigg),
\end{multline*}
\[
\glIS(\xi)=\nabla\lIS(\xi)=\frac{1}{n}\sum_{i=1}^{n}\nabla_{\xi}g_{\xi}(y_{i})-\frac{1}{m}\sum_{j=1}^{m}\nabla_{\xi}g_{\xi}(X_{j})\bigg(\frac{\exp\{g_{\xi}(X_{j})\}}{h_{\psi}(X_{j})}\bigg).\]

By Taylor-Lagrange, for any component $k$, $1\leq k\leq d+1$, there
exists (a random variable) $\xi_{m}^{(k)}\in[\emcmle;\ence]$ such that 
\[
\glISk(\emcmle)=\glISk(\ence)+\left\{ \nabla\glISk(\xi_{m}^{(k)})\right\} ^{T}\left(\emcmle-\ence\right)
\]
where $\glISk(\xi)$ denotes the $k-$th component of $\glIS(\xi)$,
and $[\emcmle;\ence]$ denotes the line segment in $\mathbb{R}^{d+1}$
which joins $\emcmle$ and $\ence$. 

By assumption (G1), the left hand side is $\smalloP(m^{-1})$. The matrix form yields: 
\[
\smalloP\big(m^{-1}\big)=\glIS(\ence)+\mathbf{H}_{m}^{\mathrm{IS}}\left(\emcmle-\ence\right),\qquad \mathbf{H}_{m}^{\mathrm{IS}}=\left(\begin{array}{c}
\left\{ \nabla\Psi_{1}^{\mathrm{IS}}(\xi_{m}^{(1)})\right\} ^{T}\\
\vdots\\
\left\{ \nabla\Psi_{d+1}^{\mathrm{IS}}(\xi_{m}^{(d+1)})\right\} ^{T}
\end{array}\right).
\]
Let us prove first the convergence of the Hessian matrix. Lemma \ref{lem:ULLN_theorem} can be applied to each row component
of the following matrix-valued function, the uniform norm of which
is $\refdist$-integrable under (H2): 
\[
\varphi_h:(\xi,x)\mapsto\bigg(\frac{1}{n}\sum_{i=1}^{n}\nabla_{\xi}^{2}g_{\xi}(y_{i})\bigg)-\bigg(\nabla_{\xi}^{2}g_{\xi}(x)+\nabla_{\xi}g_{\xi}(x)\left\{ \nabla_{\xi}g_{\xi}(x)\right\} {}^{T}\bigg)\bigg(\frac{\exp\{g_{\xi}(x)\}}{h_{\psi}(x)}\bigg).
\]
Convergences of the $d+1$ rows of
$\mathbf{H}_{m}^{\mathrm{IS}}$ can be combined to get the following result: 
\[
\left\Vert \mathbf{H}_{m}^{\mathrm{IS}}-\mathcal{H}(\emle)\right\Vert \cvm0\qquad\as
\]
where
\[
\mathcal{H}(\xi)=\mathbb{E}_{\psi}\big[\varphi_h(\xi,X)\big]=\nabla_{\xi}^{2}\ell_{n}(\xi).
\]

It turns out that $\mathcal{H}(\emle)$ is invertible as soon as (H2) holds.
This is the point of the following lemma. This implies in particular that $\mathbf{H}_{m}^{\mathrm{IS}}$ is eventually invertible with probability one.

\begin{lem}  Assume (H2) holds. At the point $\xi=\emle$, the Hessian matrix of the \textit{Poisson Transform} $\nabla_{\xi}^2l_n(\xi)$ is negative definite if and only if the Hessian of the log-likelihood $\nabla_{\theta}^2l_n(\theta)$ is definite negative.\label{lem:full_rank_hessian}\end{lem}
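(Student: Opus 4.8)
The plan is to write $\mathcal{H}(\emle)=\nabla_{\xi}^{2}\ln(\emle)$ as a $(d+1)\times(d+1)$ block matrix, with the scalar $\partial_{\nu}^{2}$-entry in the lower-right corner, and then to reduce its negative-definiteness to that of its Schur complement, which I will show equals $\mathbf{H}=\nabla_{\theta}^{2}\ln(\mle)$.

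First I would differentiate the Poisson transform $\ln(\theta,\nu)=\frac1n\sum_{i=1}^{n}\log\{h_{\theta}(y_{i})/h_{\psi}(y_{i})\}+\nu-e^{\nu}r(\theta)$, where $r(\theta)=\mathcal{Z}(\theta)/\mathcal{Z}(\psi)$. Under (H2) the map $r$ is twice continuously differentiable near $\mle$, its derivatives being obtained by differentiating under the integral sign, so that
\[
\mathcal{H}(\theta,\nu)=\begin{pmatrix}\frac1n\sum_{i=1}^{n}\nabla_{\theta}^{2}\log h_{\theta}(y_{i})-e^{\nu}\nabla_{\theta}^{2}r(\theta) & -e^{\nu}\nabla_{\theta}r(\theta)\\ -e^{\nu}\nabla_{\theta}r(\theta)^{T} & -e^{\nu}r(\theta)\end{pmatrix}.
\]
Since $\emle=(\mle,\widehat{\nu}_{n})$ maximises $\ln$ and $\nu$ ranges over all of $\R$, the first-order condition in $\nu$ is $1-e^{\widehat{\nu}_{n}}r(\mle)=0$, i.e. the relation $\widehat{\nu}_{n}=\log\{\mathcal{Z}(\psi)/\mathcal{Z}(\mle)\}$ recalled after \eqref{eq:loglik_extended}; hence the lower-right entry of $\mathcal{H}(\emle)$ equals $-1<0$.

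Then I would use the block criterion for negative-definiteness: a symmetric matrix $\bigl(\begin{smallmatrix}A&b\\ b^{T}&c\end{smallmatrix}\bigr)$ with scalar corner $c<0$ is negative definite if and only if its Schur complement $A-c^{-1}bb^{T}$ is negative definite (just apply the familiar positive-definite version to the negated matrix). With $c=-1$, $b=-e^{\widehat{\nu}_{n}}\nabla_{\theta}r(\mle)$ and $A$ the upper-left block above, the Schur complement is $A+bb^{T}$, and a short computation — substituting $e^{\widehat{\nu}_{n}}=1/r(\mle)$ and using $\nabla_{\theta}^{2}\log r=\nabla_{\theta}^{2}r/r-(\nabla_{\theta}r)(\nabla_{\theta}r)^{T}/r^{2}$ — gives
\[
A+bb^{T}=\frac1n\sum_{i=1}^{n}\nabla_{\theta}^{2}\log h_{\theta}(y_{i})-\nabla_{\theta}^{2}\log r(\mle)=\frac1n\sum_{i=1}^{n}\nabla_{\theta}^{2}\log h_{\theta}(y_{i})-\nabla_{\theta}^{2}\log\mathcal{Z}(\mle)=\nabla_{\theta}^{2}\ln(\mle)=\mathbf{H},
\]
the middle equality holding because $\log r(\theta)$ and $\log\mathcal{Z}(\theta)$ differ by a constant in $\theta$. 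Therefore $\mathcal{H}(\emle)$ is negative definite if and only if $\mathbf{H}$ is, which is the claim.

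The computation is elementary and I anticipate no genuine obstacle. The only points that require a little care are the appeal to (H2) to differentiate $\mathcal{Z}$ twice under the integral sign near $\mle$ (so that $r$ and all displayed Hessians make sense), and getting the block negative-definiteness criterion right when the scalar corner is strictly negative rather than positive.
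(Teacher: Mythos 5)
Your proposal is correct and follows essentially the same route as the paper: both write $\nabla_{\xi}^{2}\ln$ as a block matrix with the scalar $\partial_{\nu}^{2}$-entry in the corner, take the Schur complement of that (negative) corner, use the identity $\nabla_{\theta}^{2}\log\mathcal{Z}=\nabla_{\theta}^{2}\mathcal{Z}/\mathcal{Z}-\nabla_{\theta}\mathcal{Z}(\nabla_{\theta}\mathcal{Z})^{T}/\mathcal{Z}^{2}$, and invoke $e^{\widehat{\nu}_{n}}\mathcal{Z}(\mle)/\mathcal{Z}(\psi)=1$ at $\xi=\emle$ to identify the Schur complement with $\nabla_{\theta}^{2}\ln(\mle)$. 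The only cosmetic difference is that you substitute the normalisation $c=-1$ before computing the Schur complement while the paper computes it at general $(\theta,\nu)$ and specialises at the end.
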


The proof of Lemma \ref{lem:full_rank_hessian} follows from a direct block
matrix computation (using Schur's complement). For the sake of completeness, we
present a proof in the supplement.

Now, let us prove the convergence of the gradient. By assumption (G1),
we can write $\glIS(\ence)=\Delta_m+o\big(m^{-1}\big)$,
where:
\begin{align*}
\Delta_m=&  \ \glIS(\ence)-\glNCE(\ence)\\=& \Bigg\{\frac{1}{n}\sum_{i=1}^{n}\nabla_{\xi}g_{\xi}(y_{i})\Big(\frac{n\exp\{g_{\xi}(y_{i})\}}{mh_{\psi}(y_{i})+n\exp\{g_{\xi}(y_{i})\}}\Big)\\
&- \frac{1}{m}\sum_{j=1}^{m}\nabla_{\xi}g_{\xi}(X_{j})\Big(\frac{\exp\{g_{\xi}(X_{j})\}}{h_{\psi}(X_{j})}\Big)\Big(\frac{n\exp\{g_{\xi}(X_{j})\}}{mh_{\psi}(X_{j})+n\exp\{g_{\xi}(X_{j})\}}\Big)\Bigg\}_{\big|_{\xi=\ence}}
\end{align*}
hence 
\begin{align*}
\frac m n \Delta_m= & \Bigg\{\frac{1}{n}\sum_{i=1}^{n}\nabla_{\xi}g_{\xi}(y_{i})\Big(\frac{\exp\{g_{\xi}(y_{i})\}}{h_{\psi}(y_{i})}\Big)\Big(1-\frac{n\exp\{g_{\xi}(y_{i})\}}{mh_{\psi}(y_{i})+n\exp\{g_{\xi}(y_{i})\}}\Big)\\
&-  \frac{1}{m}\sum_{j=1}^{m}\nabla_{\xi}g_{\xi}(X_{j})\Big(\frac{\exp\{g_{\xi}(X_{j})\}}{h_{\psi}(X_{j})}\Big)^{2}\Big(1-\frac{n\exp\{g_{\xi}(X_{j})\}}{mh_{\psi}(X_{j})+n\exp\{g_{\xi}(X_{j})\}}\Big)\Bigg\}_{\big|_{\xi=\ence}}\\
= & \Bigg\{\frac{1}{n}\sum_{i=1}^{n}\nabla_{\xi}g_{\xi}(y_{i})\Big(\frac{\exp\{g_{\xi}(y_{i})\}}{h_{\psi}(y_{i})}\Big)-\frac{1}{m}\sum_{j=1}^{m}\nabla_{\xi}g_{\xi}(X_{j})\Big(\frac{\exp\{g_{\xi}(X_{j})\}}{h_{\psi}(X_{j})}\Big)^{2}\\
&-  \frac{1}{n}\sum_{i=1}^{n}\nabla_{\xi}g_{\xi}(y_{i})\Big(\frac{\exp\{g_{\xi}(y_{i})\}}{h_{\psi}(y_{i})}\Big)\Big(\frac{n\exp\{g_{\xi}(y_{i})\}}{mh_{\psi}(y_{i})+n\exp\{g_{\xi}(y_{i})\}}\Big)\\
&+  \frac{1}{m}\sum_{j=1}^{m}\nabla_{\xi}g_{\xi}(X_{j})\Big(\frac{\exp\{g_{\xi}(X_{j})\}}{h_{\psi}(X_{j})}\Big)^{2}\Big(\frac{n\exp\{g_{\xi}(X_{j})\}}{mh_{\psi}(X_{j})+n\exp\{g_{\xi}(X_{j})\}}\Big)\Bigg\}_{\big|_{\xi=\ence}}.
\end{align*}

The two last terms of the right hand side are residuals for which
we want to bound the uniform norm over the ball $B(\mle,\varepsilon)$. The sup norm of the second term is eventually bounded by:
\[\frac{1}{m}\underset{\xi\in B(\emle,\varepsilon)}{\text{sup}}\ \sum_{i=1}^{n}\|\nabla_{\xi}g_{\xi}(y_{i})\|\left(\frac{\exp\{g_{\xi}(y_{i})\}}{h_{\psi}(y_{i})}\right)^{2}\cvm 0.\]

The sup norm of the third term is eventually bounded by
$\frac{1}{m}\sum_{j=1}^{m}f_{m}(X_{j})$ where \[f_{m}(x)=\underset{\xi\in B(\emle,\varepsilon)}{\text{sup}}\ \|\nabla_{\xi}g_{\xi}(x)\|\left(\frac{\exp\{g_{\xi}(x)\}}{h_{\psi}(x)}\right)^{2}\left(\frac{n\exp\{g_{\xi}(x)\}}{mh_{\psi}(x)+n\exp\{g_{\xi}(x)\}}\right)\]
and Lemma \ref{lem:LLN_dominated} applies under (I1) to the sequence $(f_{m})_{m\ge1}$
converging pointwise towards 0, and dominated by the integrable function $g(x)=\underset{\xi\in B(\emle,\varepsilon)}{\text{sup}}\ \|\nabla_{\xi}g_{\xi}(x)\|\big(\frac{\exp\{g_{\xi}(x)\}}{h_{\psi}(x)}\big)^{2}.$

The limit of $(m/n)\Delta_m$ is thus dictated by the behaviour of the first term. We apply Lemma \ref{lem:ULLN_theorem}
to the following vector-valued function, whose uniform norm is integrable
under (I1) and under the continuity of the deterministic part assumed in (H2): 
\[
\varphi_g:(\xi,x)\mapsto\left(\frac{1}{n}\sum_{i=1}^{n}\nabla_{\xi}g_{\xi}(y_{i})\frac{\exp\{g_{\xi}(y_{i})\}}{h_{\psi}(y_{i})}\right)-\nabla_{\xi}g_{\xi}(x)\left(\frac{\exp\{g_{\xi}(x)\}}{h_{\psi}(x)}\right)^{2}.
\]

Lemma \ref{lem:ULLN_theorem} yields $(m/n)\Delta_m\underset{m\rightarrow+\infty}{\longrightarrow}v(\emle)$
a.s. where 
\[
v(\xi)=\frac{1}{n}\sum_{i=1}^{n}\nabla_{\xi}g_{\xi}(y_{i})\left(\frac{\exp\{g_{\xi}(y_{i})\}}{h_{\psi}(y_{i})}\right)-\mathbb{E}_{\psi}\left[\nabla_{\xi}g_{\xi}(X)\left(\frac{\exp\{g_{\xi}(X)\}}{h_{\psi}(X)}\right)^{2}\right].
\]

Combination of these facts ensure that on a set of probability one,
we have eventually: 
\[
\frac{m}{n}\ \left(\emcmle-\ence\right)  =o(1)+\left(-\mathbf{H}_{m}^{\mathrm{IS}}\right)^{-1}\left(\frac{m}{n}\Delta_m+o(1)\right)
 \cvm\Big(-\mathcal{H}(\emle)\Big)^{-1}v(\emle).
\]

\subsection{Proof of Theorem \ref{thm:consistency}}

The proof of MC-MLE consistency under the considered regime is a very straightforward adaptation of Wald's proof of consistency for the MLE. We thus choose to present in appendix only the proof of NCE consistency, which is slightly more technical, although the sketch is similar. For the sake of completeness, a proof of MC-MLE consistency is presented in the supplement.

\subsubsection{NCE consistency}

For convenience, we choose to analyse a slightly different objective function  (sharing the same maximiser with $\lNCE$), defined as:
\begin{equation}
M_{n}^{\mathrm{NCE}}(\theta,\nu)=\frac{1}{n}\sum_{i=1}^n\Big\{\varphi_{(\theta,\nu)}(Y_i)-\zeta_{(\theta,\nu)}^{(n)}(Y_i)\Big\}-\Big(\frac{m_n}{n}\Big)\times\frac{1}{m_n}\sum_{j=1}^{m_n}\zeta_{(\theta,\nu)}^{(n)}(X_j)\label{eq:NCE_objective}
\end{equation}
where $\varphi_{(\theta,\nu)}(x)=\log\left\{\frac{e^{\nu}h_{\theta}(x)}{e^{\nu^{\star}}h_{\theta^{\star}}(x)}\right\}$ and $\zeta_{(\theta,\nu)}^{(n)}(x)=\log\left\{\frac{\frac{m_n}{n} h_{\psi}(x)+e^{\nu}h_{\theta}(x)}{\frac{m_n}{n} h_{\psi}(x)+e^{\nu^{\star}}h_{\theta^{\star}}(x)}\right\}$.

We begin our proof with the following lemma.

\begin{lem} For any fixed $(\theta,\nu)$, almost surely,
$M_{n}^{\mathrm{NCE}}(\theta,\nu)\cvn\mathcal{M}_{\tau}^{\mathrm{NCE}}(\theta,\nu)$, where:
$$    \mathcal{M}_{\tau}^{\mathrm{NCE}}(\theta,\nu)=\mathbb{E}_{\theta^{\star}}\Bigg[\log\bigg\{\frac{e^{\nu}h_{\theta}}{e^{\nu^{\star}}h_{\theta^{\star}}}\bigg\}-\log\bigg\{\frac{\tau h_{\psi}+e^{\nu}h_{\theta}}{\tau h_{\psi}+e^{\nu^{\star}}h_{\theta^{\star}}}\bigg\}\Bigg]
    -\tau \mathbb{E}_{\psi}\Bigg[\log\bigg\{\frac{\tau h_{\psi}+e^{\nu}h_{\theta}}{\tau h_{\psi}+e^{\nu^{\star}}h_{\theta^{\star}}}\bigg\}\Bigg]$$
Moreover, $(\theta^{\star},\nu^{\star})$ is the unique maximiser of $\mathcal{M}_{\tau}^{\mathrm{NCE}}(\theta,\nu)$.\label{lem:pointwise_cv_nce}\end{lem}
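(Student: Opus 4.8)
The plan is to prove the two assertions of Lemma~\ref{lem:pointwise_cv_nce} in turn: first the pointwise (in the parameter) almost sure convergence $M_n^{\mathrm{NCE}}(\theta,\nu)\cvn\mathcal{M}_\tau^{\mathrm{NCE}}(\theta,\nu)$, then the identification of $(\theta^\star,\nu^\star)$ as the unique maximiser of $\mathcal{M}_\tau^{\mathrm{NCE}}$.

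For the convergence, I would decompose $M_n^{\mathrm{NCE}}(\theta,\nu)$ as $\frac1n\sum_{i=1}^n\big(\varphi_{(\theta,\nu)}-\zeta_{(\theta,\nu)}^{(n)}\big)(Y_i)-\frac{m_n}{n}\cdot\frac1{m_n}\sum_{j=1}^{m_n}\zeta_{(\theta,\nu)}^{(n)}(X_j)$ and treat each average by the law of large numbers --- applied to the IID sequence $(Y_i)$ and to the $\refdist$-ergodic sequence $(X_j)$ respectively, IID sequences being in particular ergodic. The only subtlety is that $\zeta_{(\theta,\nu)}^{(n)}$ depends on $n$ through the ratio $m_n/n$; but $\zeta_{(\theta,\nu)}^{(n)}\to\zeta_{(\theta,\nu)}$ pointwise since $m_n/n\to\tau$, and --- because $m_n/n$ eventually lies in $[\tau/2,2\tau]$ --- one has the uniform bound $\lvert\zeta_{(\theta,\nu)}^{(n)}(x)-\zeta_{(\theta,\nu)}(x)\rvert\le 2\log 2$ for all large $n$ and all $x$. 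This reduces the problem to the limiting integrands $\varphi_{(\theta,\nu)}-\zeta_{(\theta,\nu)}$ and $\zeta_{(\theta,\nu)}$, for which $(\varphi_{(\theta,\nu)}-\zeta_{(\theta,\nu)})_+$ is $\mathbb{P}_{\theta^\star}$-integrable by (H3) and $(\zeta_{(\theta,\nu)})_-$ is $\mathbb{P}_\psi$-integrable because $\mathbb{E}_\psi[h_\theta/h_\psi]=\mathcal{Z}(\theta)/\mathcal{Z}(\psi)<\infty$ (note $\lvert\zeta_{(\theta,\nu)}\rvert\le\lvert\log(e^\nu h_\theta/(e^{\nu^\star}h_{\theta^\star}))\rvert$). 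One then invokes the dominated law of large numbers of Lemma~\ref{lem:LLN_dominated} when the relevant limiting expectations are finite, and Lemma~\ref{lem:LLN_infinite} when one of them is $-\infty$ (in which case $\mathcal{M}_\tau^{\mathrm{NCE}}(\theta,\nu)=-\infty$ and $M_n^{\mathrm{NCE}}(\theta,\nu)\to-\infty$ too). A direct computation then identifies the limit with $\mathcal{M}_\tau^{\mathrm{NCE}}(\theta,\nu)$.

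The substance of the lemma is the second assertion. The key step is to rewrite $\mathcal{M}_\tau^{\mathrm{NCE}}$ in terms of densities. Writing $e^{\nu^\star}h_{\theta^\star}=\mathcal{Z}(\psi)f_{\theta^\star}$, $h_\psi=\mathcal{Z}(\psi)f_\psi$ and $e^\nu h_\theta=\mathcal{Z}(\psi)\,c\,f_\theta$ with $c=c(\theta,\nu):=e^\nu\mathcal{Z}(\theta)/\mathcal{Z}(\psi)$, and setting $\Lambda:=\tau f_\psi+f_{\theta^\star}$, $\alpha:=f_{\theta^\star}/\Lambda$ and $\beta:=c f_\theta/(\tau f_\psi+c f_\theta)$, a short manipulation (splitting each logarithm into posterior-probability ratios) yields
\[
\mathcal{M}_\tau^{\mathrm{NCE}}(\theta,\nu)=-\int_{\setX}\mathrm{KL}\!\big(\mathrm{Ber}(\alpha(x))\,\big\|\,\mathrm{Ber}(\beta(x))\big)\,\Lambda(x)\,\mu(\dx),
\]
i.e. an integrated Kullback--Leibler divergence between the true posterior probability $\alpha(x)$ that a point $x$ is a genuine observation and the model-implied posterior probability $\beta(x)$. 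This is $\le 0$, with equality if and only if $\alpha=\beta$ holds $\Lambda$-almost everywhere. Since $\tau>0$ and $\Lambda>0$ wherever $f_\psi>0$, this forces $c f_\theta=f_{\theta^\star}$ on $\{f_\psi>0\}$, hence $\mu$-almost everywhere (both $f_\theta$ and $f_{\theta^\star}$ being dominated by $\refdist$); integrating gives $c=1$, so $f_\theta=f_{\theta^\star}$, so $\theta=\theta^\star$ by identifiability, and then $c(\theta^\star,\nu)=1$ forces $\nu=\nu^\star$. Conversely $\mathcal{M}_\tau^{\mathrm{NCE}}(\theta^\star,\nu^\star)=0$ (all integrands vanish there), so $(\theta^\star,\nu^\star)$ is indeed the unique maximiser.

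The main obstacle is finding and verifying the Bernoulli--KL representation above: once it is in hand, non-negativity of KL and identifiability finish the uniqueness instantly, but producing it requires care with the three normalising constants $\mathcal{Z}(\theta),\mathcal{Z}(\theta^\star),\mathcal{Z}(\psi)$ and with the fact that $\mathcal{M}_\tau^{\mathrm{NCE}}$ may legitimately equal $-\infty$ away from $(\theta^\star,\nu^\star)$. A secondary difficulty, in the convergence part, is the integrability bookkeeping forced by the $n$-dependence of $\zeta_{(\theta,\nu)}^{(n)}$, which precludes a direct appeal to the ordinary strong law and is handled by the uniform bound $\lvert\zeta^{(n)}-\zeta\rvert\le 2\log 2$ together with Lemmas~\ref{lem:LLN_dominated} and~\ref{lem:LLN_infinite}.
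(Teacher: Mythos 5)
Your proposal is correct and matches the paper's proof in all essentials: the same decomposition of $M_n^{\mathrm{NCE}}$, the same domination bookkeeping (Lemmas \ref{lem:LLN_dominated} and \ref{lem:LLN_infinite}, together with the uniform bound $|\zeta^{(n)}-\zeta^{\infty}|\le|\log(m_n/n)-\log\tau|$ to absorb the $n$-dependence of $\zeta^{(n)}$), and the same case split according to whether the negative part of the limiting integrand has finite expectation. For the uniqueness claim, your integrated Bernoulli Kullback--Leibler identity is exactly the two-term log-sum inequality the paper invokes, rewritten for the normalised class-posterior probabilities after dividing through by $\mathcal{Z}(\psi)$; it produces the same equality condition $e^{\nu}h_{\theta}=e^{\nu^{\star}}h_{\theta^{\star}}$ almost everywhere and the same conclusion via identifiability.
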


\begin{proof}

For any fixed $(\theta,\nu)$, the sequence $\zeta_{(\theta,\nu)}^{(n)}$ is eventually dominated (by a $\refdist$-integrable function), since for any $c>0$ (in particular for $c=\tau\pm\varepsilon$) we have by Jensen's inequality:
\begin{equation}
\mathbb{E}_{\psi}\Bigg[\log\bigg\{\frac{c h_{\psi}+e^{\nu}h_{\theta}}{c h_{\psi}+e^{\nu^{\star}}h_{\theta^{\star}}}\bigg\}\Bigg]\ge\mathbb{E}_{\psi}\Bigg[\log\bigg\{\frac{f_{\psi}}{f_{\psi}+\frac{1}{c}f_{\theta^{\star}}}\bigg\}\Bigg]\ge-\log\Big(1+\frac{1}{c}\Big)\label{eq:bla}
\end{equation}
\begin{equation}
\mathbb{E}_{\psi}\Bigg[\log\bigg\{\frac{c h_{\psi}+e^{\nu}h_{\theta}}{c h_{\psi}+e^{\nu^{\star}}h_{\theta^{\star}}}\bigg\}\Bigg]\le\mathbb{E}_{\psi}\Bigg[\log\bigg\{\frac{f_{\psi}+\frac{e^{\nu}\mathcal{Z}(\theta)}{c\mathcal{Z}(\psi)}f_{\theta}}{f_{\psi}}\bigg\}\Bigg]\le\log\Big(1+\frac{e^{\nu}\mathcal{Z}(\theta)}{c\mathcal{Z}(\psi)}\Big)\label{eq:blo}
\end{equation}

Moreover, $\zeta_{(\theta,\nu)}^{(n)}$ converges pointwise to $\zeta_{(\theta,\nu)}^{\infty}(x)=\log\left\{\frac{\tau h_{\psi}(x)+e^{\nu}h_{\theta}(x)}{\tau h_{\psi}(x)+e^{\nu^{\star}}h_{\theta^{\star}}(x)}\right\}$, thus Lemma \ref{lem:LLN_dominated} applies: the second empirical average in \eqref{eq:NCE_objective} converges almost surely to $\mathbb{E}_{\psi}\big[\zeta_{(\theta,\nu)}^{\infty}(X)\big]$.
  
Now, the sequence $\big\{\varphi_{(\theta,\nu)}-\zeta_{(\theta,\nu)}^{(n)}\big\}$ is upper bounded by the positive part of $\varphi_{(\theta,\nu)}$ which is $\mathbb{P}_{\theta^{\star}}$-integrable. 
 In particular, if $\mathbb{E}_{\theta^{\star}}\big[\big(\varphi_{(\theta,\nu)}-\zeta_{(\theta,\nu)}^{\infty}\big)_{-}\big]=+\infty$, then Lemma \ref{lem:LLN_infinite} applies and the first empirical average in \eqref{eq:NCE_objective} converges towards $-\infty$.
 
Conversely, suppose that $\mathbb{E}_{\theta^{\star}}\big[\big(\varphi_{(\theta,\nu)}-\zeta_{(\theta,\nu)}^{\infty}\big)_{-}\big]<\infty$. The law of large numbers would apply directly if the sequence $m_n/n$ was exactly equal to $\tau$. To handle this technical issue, we can consider the two following inequalities. Note that for any $a\ge b>0$:
$$\log\bigg\{\frac{a h_{\psi}(x)+e^{\nu}h_{\theta}(x)}{a h_{\psi}(x)+e^{\nu^{\star}}h_{\theta^{\star}}(x)}\bigg\}\le\log\bigg\{\frac{\frac{a}{b} bh_{\psi}(x)+\frac{a}{b}e^{\nu}h_{\theta}(x)}{b h_{\psi}(x)+e^{\nu^{\star}}h_{\theta^{\star}}(x)}\bigg\}$$
$$\log\bigg\{\frac{bh_{\psi}(x)+e^{\nu}h_{\theta}(x)}{b h_{\psi}(x)+e^{\nu^{\star}}h_{\theta^{\star}}(x)}\bigg\}\le\log\bigg\{\frac{ah_{\psi}(x)+e^{\nu}h_{\theta}(x)}{\frac{b}{a} ah_{\psi}(x)+\frac{b}{a}e^{\nu^{\star}}h_{\theta^{\star}}(x)}\bigg\}$$

This yields a useful uniform bound for any $a,b>0$:
\begin{equation}
\bigg|\log\bigg\{\frac{a h_{\psi}(x)+e^{\nu}h_{\theta}(x)}{a h_{\psi}(x)+e^{\nu^{\star}}h_{\theta^{\star}}(x)}\bigg\}-\log\bigg\{\frac{b h_{\psi}(x)+e^{\nu}h_{\theta}(x)}{b h_{\psi}(x)+e^{\nu^{\star}}h_{\theta^{\star}}(x)}\bigg\}\bigg|\le\Big|\log a-\log b\Big|\label{uniform_bound}
\end{equation}

Thus, if $\mathbb{E}_{\theta^{\star}}\Big[\big(\varphi_{(\theta,\nu)}-\zeta_{(\theta,\nu)}^{\infty}\big)_{-}\Big]<+\infty$, then the uniform bound \eqref{uniform_bound} also ensures that: 
 $$\mathbb{E}_{\theta^{\star}}\Bigg[\bigg(\varphi_{(\theta,\nu)}-\log\bigg\{\frac{c h_{\psi}+e^{\nu}h_{\theta}}{c h_{\psi}+e^{\nu^{\star}}h_{\theta^{\star}}}\bigg\}\bigg)_{-}\Bigg]<+\infty$$
 for any positive $c>0$. The sequence can now be easily dominated and Lemma \ref{lem:LLN_dominated} applies; the first empirical average in \eqref{eq:NCE_objective} converges to $\mathbb{E}_{\theta^{\star}}\big[\varphi_{(\theta,\nu)}(Y)-\zeta_{(\theta,\nu)}^{\infty}(Y)\big]$.

Finally, let us prove that $(\theta^{\star},\nu^{\star})$ is the unique maximiser of $\mathcal{M}_{\tau}^{\mathrm{NCE}}$.
We have:
\begin{align*}
    \mathcal{M}_{\tau}^{\mathrm{NCE}}(\theta,\nu)&=\frac{1}{\mathcal{Z(\psi)}}\Bigg[\int_{\mathcal{X}}-\log\bigg\{\frac{e^{\nu^{\star}}h_{\theta^{\star}}(x)}{e^{\nu}h_{\theta}(x)}\bigg\}e^{\nu^{\star}}h_{\theta^{\star}}(x)\\
    &\hspace{1.6cm}+\log\bigg\{\frac{\tau h_{\psi}(x)+e^{\nu^{\star}}h_{\theta^{\star}}(x)}{\tau h_{\psi}(x)+e^{\nu}h_{\theta}(x)}\bigg\}\big(\tau h_{\psi}(x)+e^{\nu^{\star}}h_{\theta^{\star}}(x)\big)\lambda(\dx)\Bigg]\\
    &\le\frac{1}{\mathcal{Z(\psi)}}\Bigg[\int_{\mathcal{X}}-\log\bigg\{\frac{e^{\nu^{\star}}h_{\theta^{\star}}(x)}{e^{\nu}h_{\theta}(x)}\bigg\}e^{\nu^{\star}}h_{\theta^{\star}}(x)\\
    &\hspace{1.6cm}+\log\bigg\{\frac{\tau h_{\psi}(x)}{\tau h_{\psi}(x)}\bigg\} \tau h_{\psi}(x)+\log\bigg\{\frac{e^{\nu^{\star}}h_{\theta^{\star}}(x)}{e^{\nu}h_{\theta}(x)}\bigg\}e^{\nu^{\star}}h_{\theta^{\star}}(x)\lambda(\dx)\Bigg]\\
    &=0
\end{align*}
by the log-sum inequality, which applies with equality if and only if $e^{\nu}h_{\theta}(x)=e^{\nu^{\star}}h_{\theta^{\star}}(x)$ for $\mathbb{P_{\theta^{\star}}}$ almost every $x$.
This occurs if and only if $\nu$ and $\theta$ are chosen such that
$f_{\theta^{\star}}(x)=\frac{e^{\nu}}{\mathcal{Z(\psi)}}h_{\theta}(x)$. The
model being identifiable, there is only one choice for both the unnormalized
density and the normalizing constant; $\theta=\theta^{\star}$ and
$\nu=\nu^{\star}$.

\end{proof}

We now prove that the NCE estimator converges almost surely to this unique maximiser. Let $\eta>0$, and define $K_{\eta}=\{\xi\in K : d(\xi,\xi^{\star})\ge\eta\}$ where $K$ is the compact set defined in (C2).

Under (H3), monotone convergence ensures that for any $\xi\in K_{\eta}$: 
$$\lim_{\varepsilon\downarrow0}\ \mathbb{E}_{\theta^{\star}}\bigg[\underset{\beta\in B(\xi,\varepsilon)}{\sup}\Big(\varphi_{\beta}(Y)-\zeta_{\beta}^{\infty}(Y)\Big)\bigg]=\mathbb{E}_{\theta^{\star}}\Big[\varphi_{\xi}(Y)-\zeta_{\xi}^{\infty}(Y)\Big]$$
and
$$\lim_{\varepsilon\downarrow0}\ \mathbb{E}_{\psi}\bigg[\underset{\beta\in B(\xi,\varepsilon)}{\inf}\zeta_{\beta}^{\infty}(X)\bigg]= \mathbb{E}_{\psi}\Big[\zeta_{\xi}^{\infty}(X)\Big].$$
Indeed, since maps $\theta\mapsto h_{\theta}(x)$ are continuous, the two previous expectations (on the left hand side) are respectively bounded from above for $\varepsilon$ small enough, and bounded from below for any $\varepsilon$. 

 Thus, for any $\xi\in K_{\eta}$ and any $\gamma>0$ we can find $\varepsilon_{\xi}>0$ such that simultaneously:
 $$\mathbb{E}_{\theta^{\star}}\bigg[\underset{\beta\in
 B(\xi,\varepsilon_{\xi})}{\sup}\Big(\varphi_{\beta}(Y)-\zeta_{\beta}^{\infty}(Y)\Big)\bigg]\le\mathbb{E}_{\theta^{\star}}\Big[\varphi_{\xi}(Y)-\zeta_{\xi}^{\infty}(Y)\Big]+\frac{\gamma}{2}$$
 $$\mathbb{E}_{\psi}\bigg[\underset{\beta\in
 B(\xi,\varepsilon_{\xi})}{\inf}\zeta_{\beta}^{\infty}(X)\bigg]\ge
 \mathbb{E}_{\psi}\Big[\zeta_{\xi}^{\infty}(X)\Big]-\frac{\gamma}{2\tau}.$$
The compactness assumption ensures that there is a finite set $\{\xi_1,...,\xi_p\}\subset K_{\eta}$ such that $K_{\eta}\subset\bigcup_{k=1}^pB(\xi_k,\varepsilon_{\xi_k})$. This yields the following inequality:
\begin{multline*}
    \underset{\xi\in K_{\eta}}{\sup} M_{n}^{\mathrm{NCE}}(\xi)\le\underset{k=1,...,p}{\max}\Bigg\{\frac{1}{n}\sum_{i=1}^n\underset{q\ge n}{\sup}\ \underset{\beta\in B(\xi_k,\varepsilon_{\xi_k})}{\sup}\  \Big(\varphi_{\beta}(Y_i)-\zeta_{\beta}^{(q)}(Y_i)\Big)\\
    -\Big(\frac{m_n}{n}\Big)\times\frac{1}{m_n}\sum_{j=1}^{m_n}\underset{\beta\in B(\xi_k,\varepsilon_{\xi_k})}{\inf}\  \zeta_{\beta}^{(n)}(X_j)\Bigg\}
\end{multline*}
Choose any $x$ for which the map $\theta \mapsto h_{\theta}(x)$ is continuous, and any $\xi\in K_{\eta}$. From the definition of $\zeta_{\beta}^{(n)}$, the following convergence is trivial:
$$\underset{\beta\in B(\xi,\varepsilon_{\xi})}{\inf}\  \Big(\zeta_{\beta}^{(n)}(x)\Big)\underset{n\rightarrow+\infty}{\longrightarrow}\underset{\beta\in B(\xi,\varepsilon_{\xi})}{\inf}\ \Big( \zeta_{\beta}^{\infty}(x)\Big).$$
Moreover, using inequalities \eqref{eq:bla} et \eqref{eq:blo}, one can easily show that the sequence $\Big\{\underset{\beta\in B(\xi,\varepsilon_{\xi})}{\inf}\  \zeta_{\beta}^{(n)}\Big\}$ is dominated (by a $\refdist$-integrable function). Lemma \ref{lem:LLN_dominated} applies: $$\frac{1}{m_n}\sum_{j=1}^{m_n}\underset{\beta\in B(\xi_k,\varepsilon_{\xi_k})}{\inf}\  \zeta_{\beta}^{(n)}(X_j)\underset{n\rightarrow+\infty}{\longrightarrow}\mathbb{E}_{\psi}\left[\underset{\beta\in B(\xi,\varepsilon_{\xi})}{\inf}\zeta_{\beta}^{\infty}(X)\right]\qquad\as$$

Now, subadditivity of the supremum and inequality \eqref{uniform_bound} yield
\begin{multline*}
\bigg|\underset{\beta\in B(\xi,\varepsilon_{\xi})}{\sup} \Big(\varphi_{\beta}(x)-\zeta_{\beta}^{(n)}(x)\Big)-\underset{\beta\in B(\xi,\varepsilon_{\xi})}{\sup} \Big(\varphi_{\beta}(x)-\zeta_{\beta}^{(\infty)}(x)\Big)\bigg|\\
\le\underset{\beta\in B(\xi,\varepsilon_{\xi})}{\sup}\Big| \zeta_{\beta}^{(n)}(x)-\zeta_{\beta}^{\infty}(x)\Big|\le\Big|\log\frac{m_n}{n}-\log\tau\Big|\underset{n\rightarrow+\infty}{\longrightarrow}0
\end{multline*}
while monotonicity ensures that
 $$\underset{\beta\in B(\xi,\varepsilon_{\xi})}{\sup}\Big(\varphi_{\beta}-\zeta_{\beta}^{\infty}\Big)\le\underset{q\ge n}{\sup}\ \underset{\beta\in B(\xi,\varepsilon_{\xi})}{\sup}  \Big(\varphi_{\beta}-\zeta_{\beta}^{(q)}\Big)\le\underset{\beta\in B(\xi,\varepsilon_{\xi})}{\sup}  \Big(\varphi_{\beta}\Big)_{+}.$$
 In the last inequality, the right hand side is $\mathbb{P}_{\theta^{\star}}$-integrable under (H3), and the sequence (in the middle) converges pointwise towards its lower bound whose negative part has either finite or infinite expectation. In both cases, either Lemma \ref{lem:LLN_dominated} or Lemma \ref{lem:LLN_infinite} can be applied and ensures that, almost surely:
$$\frac{1}{n}\sum_{i=1}^n\underset{q\ge n}{\sup}\ \underset{\beta\in B(\xi_k,\varepsilon_{\xi_k})}{\sup}  \Big(\varphi_{\beta}(Y_i)-\zeta_{\beta}^{(q)}(Y_i)\Big)\underset{n\rightarrow+\infty}{\longrightarrow}\mathbb{E}_{\theta^{\star}}\bigg[\underset{\beta\in B(\xi,\varepsilon_{\xi})}{\sup}\Big(\varphi_{\beta}(Y)-\zeta_{\beta}^{\infty}(Y)\Big)\bigg]$$
Combining these convergences simultaneously on a finite set, we get almost surely:
\begin{align*}
    \underset{n\rightarrow+\infty}{\lim \sup} \underset{\xi\in K_{\eta}}{\sup} M_{n}^{\mathrm{NCE}}(\xi)\le&\underset{k=1,...,p}{\max}\Bigg\{\mathbb{E}_{\theta^{\star}}\bigg[\underset{\beta\in B(\xi_k,\varepsilon_{\xi_k})}{\sup}\Big(\varphi_{\beta}(Y)-\zeta_{\beta}^{\infty}(Y)\Big)\bigg]\\
    &-\tau\mathbb{E}_{\psi}\bigg[\underset{\beta\in B(\xi_k,\varepsilon_{\xi_k})}{\inf}\zeta_{\beta}^{\infty}(X)\bigg]\Bigg\}\\
    \le& \underset{\xi\in K_{\eta}}{\sup} \mathcal{M}_{\tau}^{\mathrm{NCE}}(\xi)+\gamma
\end{align*}
This leads to the following inequality since $\gamma$ is arbitrary small:
\begin{equation}
\underset{n\rightarrow+\infty}{\lim \sup} \underset{\xi\in K_{\eta}}{\sup} M_{n}^{\mathrm{NCE}}(\xi)\le\underset{\xi\in K_{\eta}}{\sup} \mathcal{M}_{\tau}^{\mathrm{NCE}}(\xi)\qquad\as\label{eq:heart_NCE}
\end{equation}

This last inequality is the heart of the proof. To conclude, we need only to show that the right hand side is negative, this is the aim of the following lemma.

\begin{lem}
 Under (H3), the map $\xi\mapsto\mathcal{M}_{\tau}^{\mathrm{NCE}}(\xi)$ is upper semi continuous.\label{lem:usc_nce}
\end{lem}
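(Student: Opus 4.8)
The plan is to prove upper semi-continuity through the sequential criterion: fix $\xi_{0}=(\theta_{0},\nu_{0})\in\Xi$, take an arbitrary sequence $\xi_{n}\to\xi_{0}$, and show $\limsup_{n}\mathcal{M}_{\tau}^{\mathrm{NCE}}(\xi_{n})\le\mathcal{M}_{\tau}^{\mathrm{NCE}}(\xi_{0})$. Using the functions $\varphi_{\xi}$ and $\zeta_{\xi}^{\infty}$ introduced in the proof of Lemma \ref{lem:pointwise_cv_nce}, I would write $\mathcal{M}_{\tau}^{\mathrm{NCE}}(\xi)=\mathbb{E}_{\theta^{\star}}[\varphi_{\xi}(Y)-\zeta_{\xi}^{\infty}(Y)]-\tau\,\mathbb{E}_{\psi}[\zeta_{\xi}^{\infty}(X)]$ and treat the two expectations separately. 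Since $\mathbb{P}_{\theta^{\star}}\ll\refdist$, the continuity of the maps $\theta\mapsto h_{\theta}(x)$ postulated in (H3) holds for $\refdist$- and $\mathbb{P}_{\theta^{\star}}$-almost every $x$, so $\varphi_{\xi_{n}}(x)\to\varphi_{\xi_{0}}(x)$ and $\zeta_{\xi_{n}}^{\infty}(x)\to\zeta_{\xi_{0}}^{\infty}(x)$ for a.e.\ $x$; this is the pointwise input for two Fatou-type arguments, a reverse Fatou for the first expectation (needing an integrable \emph{upper} envelope) and an ordinary Fatou for the second (needing an integrable \emph{lower} envelope).

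For the $\mathbb{E}_{\theta^{\star}}$-term I would reuse the pointwise inequality $\varphi_{\xi}-\zeta_{\xi}^{\infty}\le(\varphi_{\xi})_{+}$ already established in the proof of Lemma \ref{lem:pointwise_cv_nce} (with $u=e^{\nu}h_{\theta}(x)$, $u^{\star}=e^{\nu^{\star}}h_{\theta^{\star}}(x)$, $w=\tau h_{\psi}(x)$, it reduces to the elementary equivalence $u/u^{\star}\le(w+u)/(w+u^{\star})\Leftrightarrow u\le u^{\star}$, together with $\zeta_{\xi}^{\infty}\ge 0$ when $u\ge u^{\star}$). Picking $\varepsilon>0$ as provided by (H3) at $\theta_{0}$, for all $n$ large $\xi_{n}\in B(\theta_{0},\varepsilon)\times(\nu_{0}-1,\nu_{0}+1)$, so $\varphi_{\xi_{n}}-\zeta_{\xi_{n}}^{\infty}\le(\varphi_{\xi_{n}})_{+}\le(|\nu_{0}-\nu^{\star}|+1)+\sup_{\phi\in B(\theta_{0},\varepsilon)}(\log(h_{\phi}/h_{\theta^{\star}}))_{+}$, a fixed $\mathbb{P}_{\theta^{\star}}$-integrable function by (H3). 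The reverse Fatou lemma (Fatou applied to the nonnegative $g-(\varphi_{\xi_{n}}-\zeta_{\xi_{n}}^{\infty})$) then yields $\limsup_{n}\mathbb{E}_{\theta^{\star}}[\varphi_{\xi_{n}}-\zeta_{\xi_{n}}^{\infty}]\le\mathbb{E}_{\theta^{\star}}[\varphi_{\xi_{0}}-\zeta_{\xi_{0}}^{\infty}]$, and the same envelope shows the right-hand side is a well-defined element of $[-\infty,+\infty)$.

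For the $\mathbb{E}_{\psi}$-term I would use the $\xi$-free pointwise lower bound $\zeta_{\xi}^{\infty}(x)\ge\log\{\tau h_{\psi}(x)/(\tau h_{\psi}(x)+e^{\nu^{\star}}h_{\theta^{\star}}(x))\}=\log\{\tau f_{\psi}(x)/(\tau f_{\psi}(x)+f_{\theta^{\star}}(x))\}$, which is $\refdist$-integrable because $\mathbb{E}_{\psi}[\log\{\tau f_{\psi}/(\tau f_{\psi}+f_{\theta^{\star}})\}]\ge-\log(1+\tau^{-1})$ by Jensen's inequality (this is exactly \eqref{eq:bla} with $c=\tau$). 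Ordinary Fatou then gives $\liminf_{n}\mathbb{E}_{\psi}[\zeta_{\xi_{n}}^{\infty}]\ge\mathbb{E}_{\psi}[\zeta_{\xi_{0}}^{\infty}]$, i.e.\ $\limsup_{n}(-\tau\,\mathbb{E}_{\psi}[\zeta_{\xi_{n}}^{\infty}])\le-\tau\,\mathbb{E}_{\psi}[\zeta_{\xi_{0}}^{\infty}]$. Combining the two estimates through $\limsup_{n}(a_{n}+b_{n})\le\limsup_{n}a_{n}+\limsup_{n}b_{n}$ delivers $\limsup_{n}\mathcal{M}_{\tau}^{\mathrm{NCE}}(\xi_{n})\le\mathcal{M}_{\tau}^{\mathrm{NCE}}(\xi_{0})$ (and if the right-hand side equals $-\infty$, the first estimate forces $\mathcal{M}_{\tau}^{\mathrm{NCE}}(\xi_{n})\to-\infty$ as well), which is the claim.

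The only delicate points are bookkeeping: making the dominating function in the reverse-Fatou step genuinely uniform over a whole neighbourhood of $\xi_{0}$, so that the \emph{local} hypothesis (H3) suffices, and verifying that the a.e.\ convergence of $\varphi_{\xi_{n}}$ and $\zeta_{\xi_{n}}^{\infty}$ is unaffected on the $\refdist$-null sets where $\theta\mapsto h_{\theta}(x)$ is discontinuous, $h_{\psi}(x)=0$, or $h_{\theta^{\star}}(x)=0$. Because the two analytic ingredients — the bound $\varphi_{\xi}-\zeta_{\xi}^{\infty}\le(\varphi_{\xi})_{+}$ and the lower bound on $\zeta_{\xi}^{\infty}$ — are already available from the proof of Lemma \ref{lem:pointwise_cv_nce}, I do not expect a substantive obstacle.
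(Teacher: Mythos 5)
Your proof is correct, but it takes a different route from the paper's. The paper does not split $\mathcal{M}_{\tau}^{\mathrm{NCE}}$ into its two expectations: it rewrites the whole quantity as a single integral $\frac{1}{\mathcal{Z}(\psi)}\int_{\mathcal{X}}\varphi_k\,\mu(\dx)$, where $\varphi_k$ is the integrand weighted by $e^{\nu^{\star}}h_{\theta^{\star}}$ and $\tau h_{\psi}+e^{\nu^{\star}}h_{\theta^{\star}}$ respectively, observes via the log-sum inequality that $\varphi_k\le 0$ pointwise (the same inequality that shows $\mathcal{M}_{\tau}^{\mathrm{NCE}}\le0$ with unique maximiser $\xi^{\star}$), and then applies monotone convergence to the decreasing envelopes $\sup_{k\ge n}\varphi_k$, which converge pointwise to $\varphi_{\xi}$ by continuity. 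The constant $0$ thus serves as a universal integrable upper envelope, so no appeal to the integrability half of (H3) or to the Jensen bound \eqref{eq:bla} is needed for the limit interchange itself. Your version keeps the $\mathbb{E}_{\theta^{\star}}$ and $\mathbb{E}_{\psi}$ terms separate and runs a reverse-Fatou/Fatou pair, which forces you to produce two explicit envelopes — the local one from (H3) for $(\varphi_{\xi})_{+}$ and the $\xi$-free lower bound $\log\{\tau f_{\psi}/(\tau f_{\psi}+f_{\theta^{\star}})\}$ — and to do the $\limsup$-subadditivity bookkeeping in $[-\infty,+\infty)$. Both arguments are sound (your two key pointwise inequalities do hold, and the extended-real arithmetic is handled correctly); the paper's is shorter and needs less of (H3), while yours makes the role of each piece of the hypothesis more transparent and reuses machinery already set up for Lemma \ref{lem:pointwise_cv_nce}. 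One small caveat in your favour: even the paper's single-integral formulation implicitly needs the positive parts of both terms to be integrable (exactly your two envelopes) in order to identify $\frac{1}{\mathcal{Z}(\psi)}\int\varphi_k\,\dmu$ with the difference of the two expectations without an $\infty-\infty$ ambiguity, so your extra work is not wasted.
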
 

The proof of Lemma \ref{lem:usc_nce} is straightforward. For the sake of completeness, we present a proof in the supplement.

Since an upper semi continuous function achieves its maximum on any compact set, this lemma proves in particular that $\underset{\xi\in K_{\eta}}{\sup} \mathcal{M}_{\tau}^{\mathrm{NCE}}(\xi)<0$.

Thus inequality \eqref{eq:heart_NCE} implies that we can always
find some $\alpha<0$ such that eventually $\underset{\xi\in K_{\eta}}{\sup}M_{n}^{\mathrm{NCE}}(\xi)<\alpha$, while (C2) implies that $M_{n}^{\mathrm{NCE}}(\emcmle)\ge\underset{\xi\in\Xi}{\sup}\ M_{n}^{\mathrm{NCE}}(\xi)-\delta_{n}$
where $\delta_{n}\rightarrow0$, and where
\[
\underset{\xi\in\Xi}{\sup}\ M_{n}^{\mathrm{NCE}}(\xi)\ge M_{n}^{\mathrm{NCE}}(\truee)\overset{a.s.}{\underset{n\rightarrow+\infty}{\longrightarrow}}\mathcal{M}^{\mathrm{NCE}}(\truee)=0.
\]
Combination of these facts show that with probability one we have eventually: 
\[
M_{n}^{\mathrm{NCE}}(\emcmle)>\alpha>\underset{\xi\in K_{\eta}}{\sup}\ M_{n}^{\mathrm{NCE}}(\xi).
\]
This is enough to prove strong consistency. Indeed, with probability
one, $\ence$ eventually escapes from $K_{\eta}$ (otherwise there would be a contradiction with the inequality above). Since the sequence belongs to $K$ by assumption, the sequence has no choice but to stay eventually in the ball of radius $\eta$. Thus with probability one, for any $\eta>0$, we have eventually $d(\ence,\truee)<\eta$. This is the definition
of almost sure convergence.

\subsection{Proof of Theorem 4}

The proof of MC-MLE asymptotic normality is entirely classical. We choose to present in appendix only the proof of NCE asymptotic normality, which follows the same sketch but is slightly more technical. For the sake of completeness, a proof of MC-MLE asymptotic normality is presented in the supplement.

 \subsubsection{NCE asymptotic normality}

Let $G_n^{\mathrm{NCE}}(\xi)=\nabla_{\xi}\lNCE(\xi)$ and $\mathbf{H}_n^{\mathrm{NCE}}(\xi)=\nabla_{\xi}^2\lNCE(\xi)$. We have:
\begin{align*}
    G_n^{\mathrm{NCE}}(\xi)&=\frac{1}{n}\sum_{i=1}^{n}\nabla_{\xi}g_{\xi}(Y_i)\bigg(\frac{m_n h_{\psi}}{m_n h_{\psi}+n\exp\{g_{\xi}\}}\bigg)(Y_i)\\
&-\frac{1}{m_n}\sum_{j=1}^{m_n}\nabla_{\xi}g_{\xi}(X_j)\frac{\exp\{g_{\xi}(X_j)\}}{h_{\psi}(X_j)}\bigg(\frac{m_n h_{\psi}}{m_n h_{\psi}+n\exp\{g_{\xi}\}}\bigg)(X_j)\end{align*}
\begin{align*}
\mathbf{H}_n^{\mathrm{NCE}}(\xi)&=\frac{1}{n}\sum_{i=1}^n\nabla_{\xi}^2g_{\xi}(Y_i)\bigg(\frac{m_n h_{\psi}}{m_n h_{\psi}+n \exp\{g_{\xi}\}}\bigg)(Y_i)\\
&-\frac{1}{n}\sum_{i=1}^n\nabla_{\xi}\nabla_{\xi}^Tg_{\xi}(Y_i)\bigg(\frac{m_n h_{\psi}n \exp\{g_{\xi}\}}{(m_n h_{\psi}+n \exp\{g_{\xi}\})^2}\bigg)(Y_i)\\
&-\frac{1}{m_n}\sum_{j=1}^{m_n}\Big\{(\nabla_{\xi}^2+\nabla_{\xi}\nabla_{\xi}^T)g_{\xi}(X_j)\Big\}\frac{\exp\{g_{\xi}(X_j)\}}{h_{\psi}(X_j)}\bigg(\frac{m_n h_{\psi}}{m_n h_{\psi}+n \exp\{g_{\xi}\}}\bigg)(X_j)\\
&+\frac{1}{m_n}\sum_{j=1}^{m_n}\nabla_{\xi}\nabla_{\xi}^Tg_{\xi}(X_j)\frac{\exp\{g_{\xi}(X_j)\}}{h_{\psi}(X_j)}\bigg(\frac{m_n h_{\psi}n \exp\{g_{\xi}\}}{(m_n h_{\psi}+n \exp\{g_{\xi}\})^2}\bigg)(X_j)
\end{align*}
We firstly show that the study can be reduced to the following random sequences:
\begin{align*}
    G_n^{\tau}(\xi)&=\frac{1}{n}\sum_{i=1}^{n}\nabla_{\xi}g_{\xi}(Y_i)\bigg(\frac{\tau h_{\psi}}{\tau h_{\psi}+\exp\{g_{\xi}\}}\bigg)(Y_i)\\
&-\frac{1}{m_n}\sum_{j=1}^{m_n}\nabla_{\xi}g_{\xi}(X_j)\frac{\exp\{g_{\xi}(X_j)\}}{h_{\psi}(X_j)}\bigg(\frac{\tau h_{\psi}}{\tau h_{\psi}+\exp\{g_{\xi}\}}\bigg)(X_j)\end{align*}
\begin{align*}
\mathbf{H}_n^{\tau}(\xi)&=\frac{1}{n}\sum_{i=1}^n\nabla_{\xi}^2g_{\xi}(Y_i)\bigg(\frac{\tau h_{\psi}}{\tau h_{\psi}+ \exp\{g_{\xi}\}}\bigg)(Y_i)\\
&-\frac{1}{n}\sum_{i=1}^n\nabla_{\xi}\nabla_{\xi}^Tg_{\xi}(Y_i)\bigg(\frac{\tau h_{\psi} \exp\{g_{\xi}\}}{(\tau h_{\psi}+ \exp\{g_{\xi}\})^2}\bigg)(Y_i)\\
&-\frac{1}{m_n}\sum_{j=1}^{m_n}\Big\{(\nabla_{\xi}^2+\nabla_{\xi}\nabla_{\xi}^T)g_{\xi}(X_j)\Big\}\frac{\exp\{g_{\xi}(X_j)\}}{h_{\psi}(X_j)}\bigg(\frac{\tau h_{\psi}}{\tau h_{\psi}+ \exp\{g_{\xi}\}}\bigg)(X_j)\\
&+\frac{1}{m_n}\sum_{j=1}^{m_n}\nabla_{\xi}\nabla_{\xi}^Tg_{\xi}(X_j)\frac{\exp\{g_{\xi}(X_j)\}}{h_{\psi}(X_j)}\bigg(\frac{\tau h_{\psi} \exp\{g_{\xi}\}}{(\tau h_{\psi}+ \exp\{g_{\xi}\})^2}\bigg)(X_j)
\end{align*}
To do so, we show that almost surely $\underset{\xi\in B(\xi^{\star},\varepsilon)}{\sup}\|\mathbf{H}_n^{\mathrm{NCE}}(\xi)-\mathbf{H}_n^{\tau}(\xi)\|\cvn0$.

Splitting the uniform norm into four parts yields:
\begin{align*}
\underset{\xi\in B(\xi^{\star},\varepsilon)}{\sup}\Big\|\mathbf{H}_n^{\mathrm{NCE}}(\xi)&-\mathbf{H}_n^{\tau}(\xi)\Big\|\le\frac{1}{n}\sum_{i=1}^n\underset{\xi\in B(\xi^{\star},\varepsilon)}{\sup}\Big\|\nabla_{\xi}^2g_{\xi}(Y_i)\Big\|\eta_n^{\tau}(Y_i)\\
&+\frac{1}{n}\sum_{i=1}^n\underset{\xi\in B(\xi^{\star},\varepsilon)}{\sup}\Big\|\nabla_{\xi}\nabla_{\xi}^Tg_{\xi}(Y_i)\Big\|\boldsymbol{\Gamma}_n^{\tau}(Y_i)\\
&+\frac{1}{m_n}\sum_{j=1}^{m_n}\underset{\xi\in B(\xi^{\star},\varepsilon)}{\sup}\Big\|(\nabla_{\xi}^2+\nabla_{\xi}\nabla_{\xi}^T)g_{\xi}(X_j)\Big\|\frac{\exp\{g_{\xi}(X_j)\}}{h_{\psi}(X_j)}\eta_n^{\tau}(X_j)\\
&+\frac{1}{m_n}\sum_{j=1}^{m_n}\underset{\xi\in B(\xi^{\star},\varepsilon)}{\sup}\Big\|\nabla_{\xi}\nabla_{\xi}^Tg_{\xi}(X_j)\Big\|\frac{\exp\{g_{\xi}(X_j)\}}{h_{\psi}(X_j)}\gamma_n^{\tau}(X_j)\numberthis\label{eq:hess_norm_split}
\end{align*}
where the sequences of functions $$\eta_n^{\tau}=\underset{\xi\in B(\xi^{\star},\varepsilon)}{\sup}\Big|\frac{m_n h_{\psi}}{m_n h_{\psi}+n \exp\{g_{\xi}\}}-\frac{\tau h_{\psi}}{\tau h_{\psi}+ \exp\{g_{\xi}\}}\Big|$$ and $$\gamma_n^{\tau}=\underset{\xi\in B(\xi^{\star},\varepsilon)}{\sup}\Big|\frac{m_n h_{\psi}n \exp\{g_{\xi}\}}{(m_n h_{\psi}+n \exp\{g_{\xi}\})^2}-\frac{\tau h_{\psi} \exp\{g_{\xi}\}}{(\tau h_{\psi}+ \exp\{g_{\xi}\})^2}\Big|$$ 
are both upper bounded by $1$ and converge pointwise (for any $x\in\mathcal{X}$) to $0$ (use the continuity of $\xi\mapsto g_{\xi}(x)$).

Lemma \ref{lem:LLN_dominated} applies to each empirical average in \eqref{eq:hess_norm_split} (every integrability condition holds under (H4)). The sum converges to $0$ almost surely.

Now, we prove that $\forall
a\in\mathbb{R}^{d+1}\hspace{0.3cm}a^T\sqrt{n}\big(G_n^{\mathrm{NCE}}(\xi^{\star})-G_n^{\tau}(\xi^{\star})\big)\overset{\mathbb{P}}{\longrightarrow}0$.

Define $\eta_{\theta,\tau}^{(n)}=\frac{m_n f_{\psi}}{m_n f_{\psi}+nf_{\theta}}-\frac{\tau f_{\psi}}{\tau f_{\psi}+f_{\theta}}$. At the point $\xi=\xi^{\star}$ we have:
\begin{multline*}
    \sqrt{n}\big(G_n^{\mathrm{NCE}}(\xi^{\star})-G_n^{\tau}(\xi^{\star})\big)=\sqrt{n}\Bigg\{\frac{1}{n}\sum_{i=1}^{n}\big(\nabla_{\xi}g_{\xi}\big)\eta_{\theta,\tau}^{(n)}(Y_i)-\mathbb{E}_{\theta}\bigg[\big(\nabla_{\xi}g_{\xi}\big)\eta_{\theta,\tau}^{(n)}(Y)\bigg]\Bigg\}_{|_{\xi=\xi^{\star}}}\\
-\sqrt{\frac{n}{m_n}}\times\sqrt{m_n}\Bigg\{\frac{1}{m_n}\sum_{j=1}^{m_n}\big(\nabla_{\xi}g_{\xi}\big)\frac{f_{\theta}}{f_{\psi}}\eta_{\theta,\tau}^{(n)}(X_j)-\mathbb{E}_{\theta}\bigg[\big(\nabla_{\xi}g_{\xi}\big)\eta_{\theta,\tau}^{(n)}(Y)\bigg]\Bigg\}_{|_{\xi=\xi^{\star}}}
\end{multline*}
The sequence $\big|\eta_{\theta,\tau}^{(n)}\big|$ is upper bounded by $1$ and converges pointwise towards $0$. Moreover, for any $c>\tau$, the sequence $\big|\eta_{\theta,\tau}^{(n)}\big|$ is also eventually upper bounded by $2\frac{cf_{\psi}}{cf_{\psi}+f_{\theta}}$. This ensures that both second moment conditions required holds under (H4) since: 
\begin{align*}
    \int_{\setX}\|\nabla_{\xi}g_{\xi}\|^2\left(\frac{f_{\theta}}{f_{\psi}}\right)^2\left(\frac{cf_{\psi}}{cf_{\psi}+f_{\theta}}\right)^2f_{\psi}\dmu&=c\int_{\setX}\|\nabla_{\xi}g_{\xi}\|^2\left(\frac{cf_{\psi}}{cf_{\psi}+f_{\theta}}\right)\left(\frac{f_{\theta}}{cf_{\psi}+f_{\theta}}\right)f_{\theta}\dmu\\
    &\le c\times\mathbb{E}_{\theta}\left[\|\nabla_{\xi}g_{\xi}\|^2\right]<+\infty \numberthis \label{eq:second_moment}
\end{align*}
We can thus apply Lemma \ref{lem:CLT_dominated}:
\begin{align*}
    \sqrt{n}\Bigg\{\frac{1}{n}\sum_{i=1}^{n}\big(a^T\nabla_{\xi}g_{\xi}\big)\eta_{\theta,\tau}^{(n)}(Y_i)-\mathbb{E}_{\theta}\bigg[\big(a^T\nabla_{\xi}g_{\xi}\big)\eta_{\theta,\tau}^{(n)}(Y)\bigg]\Bigg\}_{|_{\xi=\xi^{\star}}}&\overset{\mathbb{P}}{\longrightarrow}0\\
    \sqrt{m_n}\Bigg\{\frac{1}{m_n}\sum_{j=1}^{m_n}\big(a^T\nabla_{\xi}g_{\xi}\big)\frac{f_{\theta}}{f_{\psi}}\eta_{\theta,\tau}^{(n)}(X_j)-\mathbb{E}_{\theta}\bigg[\big(a^T\nabla_{\xi}g_{\xi}\big)\eta_{\theta,\tau}^{(n)}(Y)\bigg]\Bigg\}_{|_{\xi=\xi^{\star}}}&\overset{\mathbb{P}}{\longrightarrow}0
\end{align*}
Finally, Cramér-Wold's device applies: $\sqrt{n}\Big(G_n^{\mathrm{NCE}}(\xi^{\star})-G_n^{\tau}(\xi^{\star})\Big)\overset{\mathbb{P}}{\longrightarrow}0_{\mathbb{R}^{d+1}}$.

Now, we can work directly with $G_n^{\tau}$ and $\mathbf{H}_n^{\tau}$, which is much easier. Indeed, Lemma \ref{lem:ULLN_theorem} yields $\underset{\xi\in B(\xi^{\star},\varepsilon)}{\sup}\|\mathbf{H}_n^{\tau}(\xi)-\mathbf{H}_{\tau}(\xi)\|\cvn0$ almost surely, where:
\begin{align*}
\mathbf{H}_{\tau}(\xi)&=\mathbb{E}_{\theta^{\star}}\bigg[\nabla_{\xi}^2g_{\xi}(Y)\bigg(\frac{\tau h_{\psi}}{\tau h_{\psi}+ \exp\{g_{\xi}\}}\bigg)(Y)\bigg]\\
&-\mathbb{E}_{\theta^{\star}}\bigg[\nabla_{\xi}\nabla_{\xi}^Tg_{\xi}(Y)\bigg(\frac{\tau h_{\psi} \exp\{g_{\xi}\}}{(\tau h_{\psi}+ \exp\{g_{\xi}\})^2}\bigg)(Y)\bigg]\\
&-\mathbb{E}_{\psi}\bigg[\Big\{(\nabla_{\xi}^2+\nabla_{\xi}\nabla_{\xi}^T)g_{\xi}(X)\Big\}\frac{\exp\{g_{\xi}(X)\}}{h_{\psi}(X)}\bigg(\frac{\tau h_{\psi}}{\tau h_{\psi}+ \exp\{g_{\xi}\}}\bigg)(X)\bigg]\\
&+\mathbb{E}_{\psi}\bigg[\nabla_{\xi}\nabla_{\xi}^Tg_{\xi}(X)\frac{\exp\{g_{\xi}(X)\}}{h_{\psi}(X)}\bigg(\frac{\tau h_{\psi} \exp\{g_{\xi}\}}{(\tau h_{\psi}+ \exp\{g_{\xi}\})^2}\bigg)(X)\bigg]
\end{align*}
The only condition required is that the supremum norm of each integrand is integrable, which is satisfied under (H4) (bound the ratios by one).

Note also that, at the point $\xi=\xi^{\star}$, functions $\mathbf{H}_{\tau}$ and $-\mathbf{J}_{\tau}$ coincide, where:
$$\mathbf{J}_{\tau}(\xi)=\mathbb{E}_{\theta}\bigg[(\nabla_{\xi}\nabla_{\xi}^Tg_{\xi})\Big(\frac{\tau f_{\psi}}{\tau f_{\psi}+f_{\theta}}\Big)(Y)\bigg]$$
A quick block matrix calculation shows that Schur's complement in $-\mathbf{J}_{\tau}(\xi)$ is proportional to:
$$\mathbf{I}_{\tau}(\theta)=\mathbb{V}_{X\sim \mathbb{Q}_{\tau}}\Big(\nabla_{\theta}\log h_{\theta}(X)\Big)$$
where $\mathbb{Q}_{\tau}$ refers to the probability measure whose density with respect to $\mu$ is defined as $q_{\tau}(x)\propto\frac{\tau f_{\psi}(x) f_{\theta}(x)}{\tau f_{\psi}(x) + f_{\theta}(x)}$. Note that $\mathbb{P}_{\theta}\ll\mathbb{Q}_{\tau}$ since the model is dominated by $\refdist$.

In particular, $\mathbf{J}_{\tau}(\truee)$ is invertible if and only if $\mathbf{I}_{\tau}(\theta^{\star})$ is invertible. Since $\mathbf{I}_{\tau}(\theta)$ is a covariance matrix, if it is not full rank, then $\nabla_{\theta}\log h_{\theta}(X)$ belongs to a hyperplane $\mathbb{Q}_{\tau}$-almost surely (and thus $\mathbb{P}_{\theta}$-almost surely). This contradicts assumption (H4) since the Fisher Information is full rank. Thus $\mathbf{I}_{\tau}(\theta^{\star})$ and $\mathbf{J}_{\tau}(\truee)$ are both invertible.

Now, we prove the weak convergence of the gradient:
\begin{align*}
    \sqrt{n}G_n^{\tau}(\xi^{\star})&=\sqrt{n}\Bigg(\frac{1}{n}\sum_{i=1}^n(\nabla_{\xi}g_{\xi})\Big(\frac{\tau f_{\psi}}{\tau f_{\psi}+f_{\theta}}\Big)(Y_i)-\mathbb{E}_{\theta}\bigg[(\nabla_{\xi}g_{\xi})\Big(\frac{\tau f_{\psi}}{\tau f_{\psi}+f_{\theta}}\Big)(Y)\bigg]\Bigg)_{|_{\xi=\xi^{\star}}}\\
    -\sqrt{\frac{n}{m_n}}&\sqrt{m_n}\Bigg(\frac{1}{m_n}\sum_{j=1}^{m_n}(\nabla_{\xi}g_{\xi})\frac{f_{\theta}}{f_{\psi}}\Big(\frac{\tau f_{\psi}}{\tau f_{\psi}+f_{\theta}}\Big)(X_j)-\mathbb{E}_{\theta}\bigg[(\nabla_{\xi}g_{\xi})\Big(\frac{\tau f_{\psi}}{\tau f_{\psi}+f_{\theta}}\Big)(Y)\bigg]\Bigg)_{|_{\xi=\xi^{\star}}}
\end{align*}
Slutky's lemma applies as follows; second moment conditions hold under (H4) (use inequality \eqref{eq:second_moment}).
$$\sqrt{n}G_n^{\tau}(\xi^{\star})\cvd \mathcal{N}\Big(0_{\mathbb{R}^{d+1}}, \boldsymbol{\boldsymbol{\Sigma}}_{\tau}(\xi^{\star})+\tau^{-1}\boldsymbol{\boldsymbol{\Gamma}}_{\tau}(\xi^{\star})\Big)$$
where \begin{align*}
    \boldsymbol{\boldsymbol{\Sigma}}_{\tau}(\xi)&=\mathbb{V}_{\theta}\left((\nabla_{\xi}g_{\xi})\Big(\frac{\tau f_{\psi}}{\tau f_{\psi}+f_{\theta}}\Big)(Y)\right),\\
    \boldsymbol{\boldsymbol{\Gamma}}_{\tau}(\xi)&=\mathbb{V}_{\psi}\left(\varphi_{\xi}^{\mathrm{NCE}}(X)\right)+2\sum_{i=1}^{+\infty}\mathrm{Cov}\Big(\varphi_{\xi}^{\mathrm{NCE}}(X_0),\varphi_{\xi}^{\mathrm{NCE}}(X_i)\Big),\\
    \varphi_{\xi}^{\mathrm{NCE}}&=(\nabla_{\xi}g_{\xi})\frac{f_{\theta}}{f_{\psi}}\Big(\frac{\tau f_{\psi}}{\tau f_{\psi}+f_{\theta}}\Big).
\end{align*}
Finally, Lemma \ref{lem:asymptotic_normality} applies:
$$\sqrt{n}\left(\ence-\xi^{\star}\right)\cvd\mathcal{N}_{d+1}\Big(0, \mathbf{V}_{\tau}^{\mathrm{NCE}}(\truee)\Big)$$
where $\mathbf{V}_{\tau}^{\mathrm{NCE}}(\xi)=\mathbf{J}_{\tau}(\xi)^{-1}\left\{\boldsymbol{\boldsymbol{\Sigma}}_{\tau}(\xi)+\tau^{-1}\boldsymbol{\boldsymbol{\Gamma}}_{\tau}(\xi)\right\}\mathbf{J}_{\tau}(\xi)^{-1}$.

\subsection{Proof of Theorem 5}

For convenience, we will use some shorthand notations. Define the real-valued
measurable functions $Q=f_{\theta}/f_{\psi}$ and $R=\tau
f_{\psi}/(\tau f_{\psi}+f_{\theta})$. Note that we have the relationship
$QR=\tau(1-R)$. In the following, assume that $\xi=\xi^{\star}$, and for any
measurable function $\varphi$, note that $\mathbb
{E}_{\theta}[\varphi]$ stands for the expectation of $\varphi(X)$ where
$X\sim\Pt$, and that $\nabla\nabla^T g_{\xi}$ stands for the measurable
matrix-valued function
$x\mapsto\nabla_{\xi}g_{\xi}(x)(\nabla_{\xi}g_{\xi}(x))^T$. We begin with the
following computations:
\begin{align*}
 \mathbf{J}(\xi)&=\mathbb{E}_{\theta}\Big[\nabla\nabla^Tg_{\xi}\Big],\\
     \boldsymbol{\Sigma}(\xi)&=\mathbb{E}_{\theta}\Big[\nabla\nabla^Tg_{\xi}\Big]-\mathbb{E}_{\theta}\Big[\nabla g_{\xi}\Big]\mathbb{E}_{\theta}\Big[\nabla^Tg_{\xi}\Big],\\
     \boldsymbol{\Gamma}(\xi)&=\mathbb{E}_{\psi}\Big[\nabla\nabla^Tg_{\xi}Q^2\Big]-\mathbb{E}_{\psi}\Big[\nabla g_{\xi}Q\Big]\mathbb{E}_{\psi}\Big[\nabla^Tg_{\xi}Q\Big]\\
     &=\mathbb{E}_{\theta}\Big[\nabla\nabla^Tg_{\xi}(R^{-1}-1)\Big]\times\tau-\mathbb{E}_{\theta}\Big[\nabla g_{\xi}\Big]\mathbb{E}_{\theta}\Big[\nabla^Tg_{\xi}\Big],\\
     \mathbf{J}_{\tau}(\xi)&=\mathbb{E}_{\theta}\Big[\nabla\nabla^Tg_{\xi}R\Big],\\
     \boldsymbol{\Sigma}_{\tau}(\xi)&=\mathbb{E}_{\theta}\Big[\nabla\nabla^Tg_{\xi}R^2\Big]-\mathbb{E}_{\theta}\Big[\nabla g_{\xi}R\Big]\mathbb{E}_{\theta}\Big[\nabla^Tg_{\xi}R\Big],\\
     \boldsymbol{\Gamma}_{\tau}(\xi)&=\mathbb{E}_{\psi}\Big[\nabla\nabla^Tg_{\xi}Q^2R^2\Big]-\mathbb{E}_{\psi}\Big[\nabla g_{\xi}QR\Big]\mathbb{E}_{\psi}\Big[\nabla^Tg_{\xi}QR\Big]\\
     &=\mathbb{E}_{\theta}\Big[\nabla\nabla^Tg_{\xi}R(1-R)\Big]\times\tau-\mathbb{E}_{\theta}\Big[\nabla g_{\xi}R\Big]\mathbb{E}_{\theta}\Big[\nabla^Tg_{\xi}R\Big]. \end{align*}
     
Fortunately, the expression of the asymptotic variances simplify, 
as shown by the following lemma. 
     
    \begin{lem} Let $Z$ be any real-valued, non-negative measurable function such that $\mathbb{E}_{\theta}\big[\nabla\nabla^Tg_{\xi}Z\big]$ is finite and invertible. Then:
$$\mathbf{M}:=\mathbb{E}_{\theta}\big[\nabla\nabla^Tg_{\xi}Z\big]^{-1}\mathbb{E}_{\theta}\big[\nabla g_{\xi}Z\big]\mathbb{E}_{\theta}\big[\nabla^T g_{\xi}Z\big]\mathbb{E}_{\theta}\big[\nabla\nabla^Tg_{\xi}Z\big]^{-1}=\left(\begin{array}{cc}
0_{\mathbb{R}^{d\times d}} & 0_{\mathbb{R}^{d}}  \\
0_{\mathbb{R}^{d}}^T & 1  \end{array} \right).$$\label{lem:empty_matrix}
\end{lem}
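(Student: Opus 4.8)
The plan is to read the conclusion directly off the very special structure of the extended score $\nabla_{\xi}g_{\xi}$. Since $g_{\xi}(x)=\log h_{\theta}(x)+\nu$ with $\xi=(\theta,\nu)$, the derivative of $g_{\xi}$ with respect to $\nu$ is identically $1$, so that
\[
\nabla_{\xi}g_{\xi}(x)=\begin{pmatrix}\nabla_{\theta}\log h_{\theta}(x)\\ 1\end{pmatrix},\qquad\text{hence}\qquad e^{T}\nabla_{\xi}g_{\xi}(x)=1\ \text{ for all }x,
\]
where $e=(0_{\mathbb{R}^{d}}^{T},1)^{T}\in\mathbb{R}^{d+1}$ denotes the last canonical basis vector. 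Multiplying the scalar identity $(\nabla_{\xi}g_{\xi}(x))^{T}e=1$ on the left by $\nabla_{\xi}g_{\xi}(x)$ gives the pointwise matrix identity $\big(\nabla\nabla^{T}g_{\xi}\big)(x)\,e=\nabla_{\xi}g_{\xi}(x)$.

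I would then multiply this identity by $Z(x)$ and integrate under $\mathbb{P}_{\theta}$. The finiteness assumption on $\mathbf{A}:=\mathbb{E}_{\theta}\big[\nabla\nabla^{T}g_{\xi}Z\big]$ guarantees that $\mathbf{b}:=\mathbb{E}_{\theta}\big[\nabla g_{\xi}Z\big]$ is well defined (in fact $\mathbf{b}$ turns out to be literally the last column of $\mathbf{A}$, so its finiteness is automatic), and one obtains $\mathbf{A}e=\mathbf{b}$. Moreover $\mathbf{A}$, being an expectation of symmetric rank-one matrices, is symmetric, and hence so is $\mathbf{A}^{-1}$.

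Substituting $\mathbf{b}=\mathbf{A}e$ into the definition of $\mathbf{M}$ finishes the argument in one line, using $\mathbf{A}^{T}=\mathbf{A}$:
\[
\mathbf{M}=\mathbf{A}^{-1}\mathbf{b}\,\mathbf{b}^{T}\mathbf{A}^{-1}=\mathbf{A}^{-1}(\mathbf{A}e)(\mathbf{A}e)^{T}\mathbf{A}^{-1}=\mathbf{A}^{-1}\mathbf{A}\,e\,e^{T}\mathbf{A}\,\mathbf{A}^{-1}=e\,e^{T},
\]
and $e\,e^{T}$ is exactly the $(d+1)\times(d+1)$ matrix whose only nonzero entry is a $1$ in the bottom-right corner, i.e. the matrix displayed in the statement. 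There is no genuine obstacle here: the whole content of the lemma is the observation that the $\nu$-component of $\nabla_{\xi}g_{\xi}$ is the constant $1$, which forces $\mathbf{b}$ to be a column of $\mathbf{A}$; the only routine point to record is that the stated integrability of $\mathbf{A}$ carries over to $\mathbf{b}$, which is immediate since $\mathbf{b}$ is a sub-block of $\mathbf{A}$, so that all the matrix manipulations above are legitimate.
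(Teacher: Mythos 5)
Your proof is correct, and it takes a genuinely different and shorter route than the paper's. The paper proves the lemma by brute force: it normalises $\mathbb{E}_{\theta}[Z]=1$, writes $\mathbb{E}_{\theta}[\nabla g_{\xi}Z]\mathbb{E}_{\theta}[\nabla^{T}g_{\xi}Z]$ as $\mathbb{E}_{\theta}[\nabla\nabla^{T}g_{\xi}Z]$ minus a block containing the Schur complement $A_{Z}$, and then computes the block inverse of $\mathbb{E}_{\theta}[\nabla\nabla^{T}g_{\xi}Z]$ explicitly before subtracting the two resulting matrices entrywise. You instead isolate the one structural fact that drives everything: the last component of $\nabla_{\xi}g_{\xi}$ is the constant $1$, so $\big(\nabla\nabla^{T}g_{\xi}\big)e=\nabla_{\xi}g_{\xi}$ pointwise, hence $\mathbf{A}e=\mathbf{b}$ after multiplying by $Z$ and integrating, and the conjugation by $\mathbf{A}^{-1}$ collapses to $ee^{T}$ by symmetry of $\mathbf{A}$. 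This avoids the normalisation of $Z$, the block-inverse formula, and any invertibility discussion for the Schur complement $A_{Z}$; the only hypotheses you actually use are symmetry and invertibility of $\mathbf{A}$, and your observation that $\mathbf{b}$ is the last column of $\mathbf{A}$ disposes of the integrability point cleanly. The paper's computation does produce the explicit block inverse of $\mathbf{A}$ as a by-product, but nothing downstream requires it, so your argument is a strict improvement in economy while proving exactly the same statement.
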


The proof of Lemma \ref{lem:empty_matrix} follows from a direct block matrix computation. For the sake of completeness, we present a proof in the supplement.

Let $\mathbf{M}$ be defined as in Lemma \ref{lem:empty_matrix}, matrix calculations yield
     \begin{align*}
     \mathbf{J}(\xi)^{-1}\boldsymbol{\Sigma}(\xi)\mathbf{J}(\xi)^{-1}&=\mathbb{E}_{\theta}\Big[\nabla\nabla^Tg_{\xi}\Big]^{-1}-\mathbf{M},\\
\mathbf{J}_{\tau}(\xi)^{-1}\boldsymbol{\Sigma}_{\tau}(\xi)\mathbf{J}_{\tau}(\xi)^{-1}&=\mathbb{E}_{\theta}\Big[\nabla\nabla^Tg_{\xi}R\Big]^{-1}\mathbb{E}_{\theta}\Big[\nabla\nabla^Tg_{\xi}R^2\Big]\mathbb{E}_{\theta}\Big[\nabla\nabla^Tg_{\xi}R\Big]^{-1}-\mathbf{M},\\
\mathbf{J}(\xi)^{-1}\boldsymbol{\Gamma}(\xi)\mathbf{J}(\xi)^{-1}&=\tau\mathbb{E}_{\theta}\Big[\nabla\nabla^Tg_{\xi}\Big]^{-1}\mathbb{E}_{\theta}\Big[\nabla\nabla^Tg_{\xi}(R^{-1}-1)\Big]\mathbb{E}_{\theta}\Big[\nabla\nabla^Tg_{\xi}\Big]^{-1}-\mathbf{M},\\
\mathbf{J}_{\tau}(\xi)^{-1}\boldsymbol{\Gamma}_{\tau}(\xi)\mathbf{J}_{\tau}(\xi)^{-1}&=\tau\mathbb{E}_{\theta}\Big[\nabla\nabla^Tg_{\xi}R\Big]^{-1}\mathbb{E}_{\theta}\Big[\nabla\nabla^Tg_{\xi}R(1-R)\Big]\mathbb{E}_{\theta}\Big[\nabla\nabla^Tg_{\xi}R\Big]^{-1}-\mathbf{M}.\end{align*}

Summing up these expressions we finally get
$$
\mathbf{V}_{\tau}^{\mathrm{IS}}(\xi)=\mathbb{E}_{\theta}\Big[\nabla\nabla^Tg_{\xi}\Big]^{-1}\mathbb{E}_{\theta}\Big[\nabla\nabla^Tg_{\xi}R^{-1}\Big]\mathbb{E}_{\theta}\Big[\nabla\nabla^Tg_{\xi}\Big]^{-1}-(1+\tau^{-1})\mathbf{M},$$
$$\mathbf{V}_{\tau}^{\mathrm{NCE}}(\xi)=\mathbb{E}_{\theta}\Big[\nabla\nabla^Tg_{\xi}R\Big]^{-1}-(1+\tau^{-1})\mathbf{M}.$$

Now, to compare these variances, the idea is the following: $(x,y)\mapsto
x^2/y$ is a convex function on $\mathbb{R}^2$, which means Jensen's inequality
ensures that for any random variables $X,Y$ such that the following
expectations exist we have 
$\mathbb{E}[X^2/Y]\ge\mathbb{E}[X]^2/\mathbb{E}[Y]$. Here the variances are matrices, but it turns out that it is possible to use a generalization of Jensen's inequality to the Loewner partial order on matrices.  We introduce the following notations:
\begin{align*}
    \mathbb{M}_{n,m}&\text{ is the set of $n\times m$ matrices},\\
     \mathbb{S}_n&\text{ is the set of $n\times n$ symmetric matrices},\\
    \mathbb{S}_n^+&\text{ is the set of ($n\times n$ symmetric) positive semi-definite matrices},\\
     \mathbb{S}_n^{++}&\text{ is the set of ($n\times n$ symmetric) positive definite matrices},\\
    \mathcal{R}(A)&\text{ is the range of $A$},\\
     \Delta_{n,m}&=\Big\{(A,B)\in \mathbb{S}_n^{+}\times \mathbb{M}_{n,m}\ :\ \mathcal{R}(B)\subset\mathcal{R}(A)\Big\},\\
A^{\dagger}&\text{ denotes the Moore-Penrose pseudo-inverse of }A,\\
\succcurlyeq&\text{ denotes the Loewner partial order }(A_1\succcurlyeq A_2 \text{ iff } A_1-A_2\in \mathbb{S}_n^+).
\end{align*}

\begin{lem}
 Let A,B be random matrices such that $(A,B)\in \Delta_{n,m}$ with probability one for some positive integers $n,m$. Let $\varphi:(A,B)\mapsto B^TA^{\dagger}B$ defined on $\Delta_{n,m}$.
Then $\mathbb{E}[\varphi(A,B)]\succcurlyeq \varphi(\mathbb{E}[A],\mathbb{E}[B])$ provided that the three expectations exist.\label{lem:jensen_matrices}
\end{lem}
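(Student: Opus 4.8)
The plan is to realise $\varphi:(A,B)\mapsto B^{T}A^{\dagger}B$ as the ``active constraint'' of a jointly convex map (the convexity being encoded by a Schur-complement characterisation of positive semi-definiteness), and then to invoke the measure-theoretic form of Jensen's inequality for closed convex sets. The one genuinely technical point, which I expect to be the main obstacle, is making the pseudo-inverse Schur complement work when $A$ is only positive semi-definite rather than positive definite; everything after that is soft.

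\emph{Step 1: a generalised Schur complement.} I would first prove that for $A\in\mathbb{S}_n^{+}$, $B\in\mathbb{M}_{n,m}$, $C\in\mathbb{S}_m$,
\[
\begin{pmatrix}A & B\\ B^{T} & C\end{pmatrix}\succcurlyeq 0
\iff
\mathcal{R}(B)\subseteq\mathcal{R}(A)\ \text{ and }\ C\succcurlyeq B^{T}A^{\dagger}B.
\]
Necessity of the range condition: if some $Bv$ had a nonzero component $b\in\mathcal{R}(A)^{\perp}=\ker A$, then evaluating the quadratic form of the block matrix at $(-tb,v)$ gives $-2t\|b\|^{2}+v^{T}Cv\to-\infty$ as $t\to\infty$, a contradiction. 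Granting $\mathcal{R}(B)\subseteq\mathcal{R}(A)$, the Moore-Penrose identities $AA^{\dagger}B=B$ and $B^{T}A^{\dagger}A=B^{T}$ together with the symmetry of $A^{\dagger}$ make the congruence by $S=\bigl(\begin{smallmatrix}I & -A^{\dagger}B\\ 0 & I\end{smallmatrix}\bigr)$ explicit,
\[
S^{T}\begin{pmatrix}A & B\\ B^{T} & C\end{pmatrix}S=\begin{pmatrix}A & 0\\ 0 & C-B^{T}A^{\dagger}B\end{pmatrix},
\]
from which both implications follow (congruence preserves positive semi-definiteness, and $A^{\dagger}A^{1/2}A^{1/2}A^{\dagger}=P_{\mathcal{R}(A)}$ shows $B^{T}A^{\dagger}B\succcurlyeq 0$ always).

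\emph{Step 2: the epigraph is a closed convex cone, then Jensen.} Set
\[
\mathcal{E}=\Bigl\{(A,B,C)\in\mathbb{S}_n\times\mathbb{M}_{n,m}\times\mathbb{S}_m:\ \begin{pmatrix}A & B\\ B^{T} & C\end{pmatrix}\succcurlyeq 0\Bigr\}.
\]
Being the preimage of the closed convex cone $\mathbb{S}_{n+m}^{+}$ under the linear (hence continuous) map $(A,B,C)\mapsto\bigl(\begin{smallmatrix}A & B\\ B^{T} & C\end{smallmatrix}\bigr)$, the set $\mathcal{E}$ is a closed convex cone; and by Step 1 it equals $\{(A,B,C):(A,B)\in\Delta_{n,m},\ C\succcurlyeq\varphi(A,B)\}$, i.e. the epigraph of $\varphi$. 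Since $(A,B)\in\Delta_{n,m}$ almost surely, the random vector $W=(A,B,\varphi(A,B))$ takes values in $\mathcal{E}$ almost surely, and it is integrable by the standing hypothesis. The standard separation argument then applies: were $\mathbb{E}[W]\notin\mathcal{E}$, the separating hyperplane theorem would furnish a linear functional $\ell$ with $\ell(\mathbb{E}[W])>\sup_{z\in\mathcal{E}}\ell(z)\ge\ell(W)$ almost surely, contradicting $\ell(\mathbb{E}[W])=\mathbb{E}[\ell(W)]$. Hence $(\mathbb{E}[A],\mathbb{E}[B],\mathbb{E}[\varphi(A,B)])=\mathbb{E}[W]\in\mathcal{E}$, which by the description of $\mathcal{E}$ says precisely that $(\mathbb{E}[A],\mathbb{E}[B])\in\Delta_{n,m}$ (so $\varphi(\mathbb{E}[A],\mathbb{E}[B])$ is well defined) and $\mathbb{E}[\varphi(A,B)]\succcurlyeq\varphi(\mathbb{E}[A],\mathbb{E}[B])$, which is the claim.

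As indicated, the delicate step is the bookkeeping of ranges and pseudo-inverse identities in Step 1; once the generalised Schur complement is in place, convexity and closedness of $\mathcal{E}$ are automatic, and the passage ``$W\in\mathcal{E}$ a.s. $\Rightarrow\mathbb{E}[W]\in\mathcal{E}$'' is routine.
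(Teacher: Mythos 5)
Your proof is correct, and it reaches the conclusion by a route that differs from the paper's in how Jensen's inequality is deployed, even though both arguments pivot on the same key tool: the generalised Schur-complement characterisation of positive semi-definiteness for block matrices with a pseudo-inverse (the paper cites Boyd--Vandenberghe for it; you reprove it, correctly, via the range-necessity argument and the congruence by $\bigl(\begin{smallmatrix}I & -A^{\dagger}B\\ 0 & I\end{smallmatrix}\bigr)$). The paper uses this characterisation to prove that $\varphi$ is convex for the Loewner order --- Ando's trick of summing the two block matrices $\bigl(\begin{smallmatrix}A_i & B_i\\ B_i^T & B_i^TA_i^{\dagger}B_i\end{smallmatrix}\bigr)$ and reading off the Schur complement of the sum --- and then reduces to the scalar Jensen inequality applied to the quadratic forms $(A,B)\mapsto x^T\varphi(A,B)x$. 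You instead observe that the epigraph of $\varphi$ is exactly the preimage of $\mathbb{S}_{n+m}^{+}$ under a linear map, hence a closed convex cone, and apply the ``support'' form of Jensen: an integrable random vector supported in a closed convex set has its mean in that set, by separation. Your packaging is arguably cleaner: it bypasses the explicit convexity proof and the reduction to quadratic forms, and it delivers as a by-product that $(\mathbb{E}[A],\mathbb{E}[B])\in\Delta_{n,m}$, so that $\varphi(\mathbb{E}[A],\mathbb{E}[B])$ is well defined --- a point the paper's proof leaves implicit (it requires separately checking that $\Delta_{n,m}$ is convex and that the mean stays in it). One immaterial slip: $A^{\dagger}A^{1/2}A^{1/2}A^{\dagger}$ equals $A^{\dagger}$, not $P_{\mathcal{R}(A)}$; the intended conclusion $A^{\dagger}\succcurlyeq 0$, hence $B^TA^{\dagger}B\succcurlyeq 0$, still follows from that factorisation.
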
 

\begin{proof}
We just have to prove that $f$ is convex with respect to $\succcurlyeq$, i.e. that for any $\lambda\in[0,1]$, and any $(A_1,B_1),(A_2,B_2) \in \Delta_{n,m}$ we have $\lambda \varphi(A_1,B_1)+(1-\lambda) \varphi(A_2,B_2)\succcurlyeq \varphi(\lambda
(A_1,B_1)+(1-\lambda)(A_2,B_2))$. Indeed, if this convex relationship on matrices is satisfied then for any $x\in\mathbb{R}^m$, the real-valued map $q:(A,B)\mapsto x^T\varphi(A,B)x$ is necessarily convex on $\Delta_{n,m}$. Consequently, Jensen's inequality applies, i.e. for any random $(A,B)\in \Delta_{n,m}$ a.s. and any $x\in\mathbb{R}^m$ we have 
\[x^T\mathbb{E}[\varphi(A,B)]x=\mathbb{E}[q(A,B)]\ge q(\mathbb{E}[A],\mathbb{E}[B])=x^T\varphi(\mathbb{E}[A],\mathbb{E}[B])x
\] which is the claim of the lemma.

Now, to prove that $\varphi$ is convex with respect to $\succcurlyeq$, we use a
property of the generalized Schur's complement in positive semi-definite
matrices (see \citet{boyd_convex_optimization} p.651): let $A\in
\mathbb{S}_n,B\in\mathbb{M}_{n,m},C\in \mathbb{S}_{m}$, and consider the block
symmetric matrix
\[
	D=\left(\begin{array}{cc}
A & B  \\
B^T & C  \end{array} \right).
\]
Then we have $$D\succcurlyeq0\hspace{0.5cm}\Leftrightarrow \hspace{0.5cm}A\succcurlyeq0\hspace{0.2cm},\hspace{0.2cm}\mathcal{R}(B)\subset
\mathcal{R}(A)\hspace{0.2cm},\hspace{0.2cm}C-B^TA^{\dagger}B\succcurlyeq0.$$

This leads to a straightforward proof of the convexity of $\varphi$. To our knowledge, the following trick is due to \citet{ando_concavity_pdm}, whose original proof was restricted to positive definite matrices. We use the generalized Schur's complement to extend this result to any $(A,B)\in\Delta_{n,m}$: let  $\lambda\in[0,1]$, and $(A_1,B_1),(A_2,B_2) \in \Delta_{n,m}$. The sum of two positive semi definite matrices is positive semi-definite thus we have 
$$\lambda\left(\begin{array}{cc}
A_1 & B_1  \\
B_1^T & B_1^TA_1^{\dagger}B_1  \end{array} \right)+(1-\lambda)\left(\begin{array}{cc}
A_2 & B_2  \\
B_2^T & B_2^TA_2^{\dagger}B_2  \end{array} \right)\succcurlyeq0$$
which is the same as 
\[\left(
		\begin{array}{cc}
\lambda A_1+(1-\lambda)A_2 & \lambda B_1+(1-\lambda)B_2  \\
\lambda B_1^T+(1-\lambda)B_2^T & \lambda
B_1^TA_1^{\dagger}B_1+(1-\lambda)B_2^TA_2^{\dagger}B_2\\
		\end{array} 
\right)\succcurlyeq 0.
\]
 Consequently, the generalised Schur's complement in this last block matrix is also positive semi-definite, i.e. 
 $$\lambda
B_1^TA_1^{\dagger}B_1+(1-\lambda)B_2^TA_2^{\dagger}B_2\succcurlyeq\big(\lambda B_1+(1-\lambda)B_2\big)^T\big[\lambda A_1+(1-\lambda)A_2\big]^{\dagger}\big(\lambda B_1+(1-\lambda)B_2\big)$$
which proves the convexity of $\varphi$ with respect to $\succcurlyeq$, and thus the claim of the lemma.
\end{proof}

Finally, we compare the asymptotic variances of the two estimators. Note that for any $(A,B)\in \mathbb{S}_n\times \mathbb{S}_n^{++}$, and for every $x\in\mathbb{R}^n$, we have 
$$x^TAx\ge0\hspace{0.5cm}\Leftrightarrow\hspace{0.5cm}x^TBABx\ge 0.$$
Indeed, if $A$ is semi definite positive then for some integer $k$ we can find $P\in \mathbb{M}_{k,n}$ such that $A=P^TP$, moreover, $B$ being symmetric we have $x^TBABx=\|PBx\|^2\ge0$. The direct implication is enough since $B^{-1}\in \mathbb{S}_n^{++}$.

Consequently, the relation $\mathbf{V}_{\tau}^{\mathrm{IS}}(\xi)\succcurlyeq\mathbf{V}_{\tau}^{\mathrm{NCE}}(\xi)$ is equivalent to the relation 
\begin{equation}
\mathbb{E}_{\theta}\Big[\nabla\nabla^Tg_{\xi}R^{-1}\Big]\succcurlyeq\mathbb{E}_{\theta}\Big[\nabla\nabla^Tg_{\xi}\Big]\mathbb{E}_{\theta}\Big[\nabla\nabla^Tg_{\xi}R\Big]^{-1}\mathbb{E}_{\theta}\Big[\nabla\nabla^Tg_{\xi}\Big].\label{eq:inequality_matrices}
\end{equation}
Inequality \eqref{eq:inequality_matrices} is a direct application of Lemma \ref{lem:jensen_matrices} (let $B=\nabla\nabla^Tg_{\xi}$, $A=BR$; note that $(A,B)\in\Delta_{d+1,d+1}$ almost surely; and use basic properties of the pseudo-inverse).

\bibliographystyle{apalike}
\bibliography{biblio}

\section{Supplement}
\subsection{Proofs of technical lemmas}
\subsubsection{Proof of Lemma \ref{lem:LLN_monotone}}
 For all $k\in\mathbb{N}$, eventually (for any $m\ge k$) we have
\[\frac{1}{m}\sum_{j=1}^mf_{k}(X_j)\le\frac{1}{m}\sum_{j=1}^mf_{m}(X_j)\le\frac{1}{m}\sum_{j=1}^mf(X_j).\]
Moreover, since $(X_j)_{j\ge1}$ is $\refdist$-ergodic, the law of large numbers
applies (even if the expectations are infinite, since $f_{k}$ and $f$ are non-negative):
\[\frac{1}{m}\sum_{j=1}^mf_{k}(X_j)\underset{m\rightarrow+\infty}{\overset{a.s.}{\longrightarrow}}\mathbb{E}_{\psi}[f_k(X)] \hspace{0.5cm}\text{and}\hspace{0.5cm} \frac{1}{m}\sum_{j=1}^mf(X_j)\underset{m\rightarrow+\infty}{\overset{a.s.}{\longrightarrow}}\mathbb{E}_{\psi}[f(X)].\]
Thus, there is a set of probability one on which for every $k\in\mathbb{N}$,
\[\mathbb{E}_{\psi}[f_k(X)]\le\underset{m\rightarrow+\infty}{\text{lim inf}}\ \frac{1}{m}\sum_{j=1}^mf_{m}(X_j)\le\underset{m\rightarrow+\infty}{\text{lim sup}}\ \frac{1}{m}\sum_{j=1}^mf_{m}(X_j)\le\mathbb{E}_{\psi}[f(X)].\]
Since the inequality holds for any $k\in\mathbb{N}$, it also holds for the supremum over $k$:
\[\underset{k\in\mathbb{N}}{\text{sup}}\ \mathbb{E}_{\psi}[f_k(X)]\le\underset{m\rightarrow+\infty}{\text{lim inf}}\ \frac{1}{m}\sum_{j=1}^mf_{m}(X_j)\le\underset{m\rightarrow+\infty}{\text{lim sup}}\ \frac{1}{m}\sum_{j=1}^mf_{m}(X_j)\le\mathbb{E}_{\psi}[f(X)].
\]
Finally, the monotone convergence theorem yields
\[\underset{k\in\mathbb{N}}{\text{sup}}\ \mathbb{E}_{\psi}[f_k(X)]=\underset{k\rightarrow+\infty}{\text{lim}}\ \mathbb{E}_{\psi}[f_k(X)]=\mathbb{E}_{\psi}\big[\underset{k\rightarrow+\infty}{\text{lim}}\ f_k(X)\big]=\mathbb{E}_{\psi}[f(X)].\]
Consequently, the lower and upper limits are both equal to $\mathbb{E}_{\psi}[f(X)]$ almost surely.

\subsubsection{Proof of Lemma \ref{lem:LLN_dominated}}

Since $(X_j)_{j\ge1}$ is $\refdist$-ergodic and $f$ is dominated by the integrable function $g$, the law of large numbers applies to function $f$. Thus, we just need to prove that 
\[\left|\frac{1}{m}\sum_{j=1}^m\left\{f_m(X_j)-f(X_j)\right\}\right|\underset{m\rightarrow+\infty}{\overset{a.s.}{\longrightarrow}}0.\]
To do so, use the fact that
\begin{align*}
    \left|\frac{1}{m}\sum_{j=1}^m\left\{f_m(X_j)-f(X_j)\right\}\right|&\le\frac{1}{m}\sum_{j=1}^m\left|f_m(X_j)-f(X_j)\right|\\&\le\frac{1}{m}\sum_{j=1}^m\underset{k\ge m}{\text{sup}}\ \big|f_k(X_j)-f(X_j)\big|.
\end{align*}
Define $h_m=2g-\underset{k\ge m}{\text{sup}}\ |f_k-f|$ and note that $(h_m)_{m\ge1}$ is a non-decreasing sequence of non negative functions converging pointwise towards $2g$. Lemma 1 yields
\[\frac{1}{m}\sum_{j=1}^mh_m(X_j)\underset{m\rightarrow+\infty}{\overset{a.s.}{\longrightarrow}}\mathbb{E}_{\psi}[2g(X)].\]
Finally $g$ is integrable, thus the remainder converges almost surely towards zero:
\[\frac{1}{m}\sum_{j=1}^m\underset{k\ge m}{\text{sup}}\ \big|f_k(X_j)-f(X_j)\big|=\frac{2}{m}\sum_{j=1}^mg(X_j)-\frac{1}{m}\sum_{j=1}^mh_m(X_j)\underset{m\rightarrow+\infty}{\overset{a.s.}{\longrightarrow}}0.\]

\subsubsection{Proof of Lemma \ref{lem:LLN_infinite}}

Since $g$ is integrable and $(X_j)_{j\ge1}$ is $\refdist$-ergodic we have
\[\frac{1}{m}\sum_{j=1}^mg(X_j)\underset{m\rightarrow+\infty}{\overset{a.s.}{\longrightarrow}}\mathbb{E}_{\psi}\big[g(X)\big]<+\infty.\]
Thus we only need to show that
\[\frac{1}{m}\sum_{j=1}^m\left\{g(X_j)-f_{m}(X_j)\right\}\underset{m\rightarrow+\infty}{\overset{a.s.}{\longrightarrow}}+\infty.\]
Define $h_m=g-\underset{k\ge m}{\sup}f_{k}$, an increasing sequence of non negative functions converging pointwise to $g-f$. Lemma 1 applies whether $g-f$ is integrable or not:
\[\frac{1}{m}\sum_{j=1}^m\left(g(X_j)-f_{m}(X_j)\right)\ge\frac{1}{m}\sum_{j=1}^mh_m(X_j)\underset{m\rightarrow+\infty}{\overset{a.s.}{\longrightarrow}}\mathbb{E}_{\psi}\big[g(X)-f(X)\big].\]
The following inequality shows that the expectation is indeed infinite:
\[\mathbb{E}_{\psi}\big[g(X)-f(X)\big]=\mathbb{E}_{\psi}\big[\big(g(X)-f(X)_{+}\big)+f(X)_{-}\big]\ge\mathbb{E}_{\psi}\big[f(X)_{-}\big]=+\infty.\]

\subsubsection{Proof of Lemma \ref{lem:ULLN_theorem}}

To begin, note that measurability of the supremum is ensured by the lower semi-continuity of the maps $\theta\mapsto \varphi(\theta,x)$ on a set of probability one that does not depend on $\theta$.

For every $\theta\in K$,
consider the following function:
$$f_{\theta}(\eta)=\mathbb{E}_{\psi}\bigg[\underset{\phi\in B(\theta,\eta)}{\text{sup}}\ \|\varphi(\phi,X)-\varphi(\theta,X)\|\bigg].$$
Dominated convergence implies that $f_{\theta}(\eta)$ converges to zero when $\eta$ goes to zero. This is enough to ensure the continuity of the map $\theta\mapsto\mathbb{E}_{\psi}\big[\varphi(\theta,X)\big]$ because of the following inequality:
$$\underset{\phi\in B(\theta,\eta)}{\text{sup}}\ \|\mathbb{E}_{\psi}\big[\varphi(\phi,X)-\varphi(\theta,X)\big]\|\le f_{\theta}(\eta).$$

Let $\varepsilon>0$. For every $\theta\in K$, we can always find
$\eta_{(\theta,\varepsilon)}>0$ small enough such that
$f_{\theta}(\eta_{(\theta,\varepsilon)})<\varepsilon$. Note that the open balls
centered on $\theta\in K$ of radius $\eta_{(\theta,\varepsilon)}$, form an open
cover of $K$, from which we can extract a finite subcover thanks to the
compactness assumption.
Thus we can build a finite set $\{\phi_1,...,\phi_I\}\subset K$ (centers of the balls) such that
$$K\subset\bigcup_{i=1}^IB_i,\qquad B_i=B(\phi_i,\eta_{(\phi_i,\varepsilon)}).$$
Now, for any $\theta\in K$ define $i_{\theta}$ as the smallest integer $i\in\{1,...,I\}$ such that $\theta\in B_i$, and consider the following equality:
\begin{align*}
    \frac{1}{m}\sum_{j=1}^m\varphi(\theta,X_j)-\mathbb{E}_{\psi}\big[\varphi(\theta,X)\big]&=\frac{1}{m}\sum_{j=1}^m\Big\{\varphi(\theta,X_j)-\varphi(\phi_{i_{\theta}},X_j)\Big\}\\
    &+\frac{1}{m}\sum_{j=1}^m\varphi(\phi_{i_{\theta}},X_j)-\mathbb{E}_{\psi}\big[\varphi(\phi_{i_{\theta}},X)\big]\\
    &+\mathbb{E}_{\psi}\big[\varphi(\phi_{i_{\theta}},X)\big]-\mathbb{E}_{\psi}\big[\varphi(\theta,X)\big]
\end{align*}

The three last terms are functions of $\theta$ for which we want to bound the uniform norm.

The uniform norm of the third term is lower than $\varepsilon$ since $\forall\theta\in K$, $d(\theta,\phi_{i_{\theta}})\le\eta_{(\phi_{i_{\theta}},\varepsilon)}$. The second term converges to zero by the law of large number since $\{\phi_1,...,\phi_J\}$ is finite. Finally, the uniform norm of the second term can be bounded by
$$U_m=\underset{1\le i\le I}{\max}\ \Bigg\{ \frac{1}{m}\sum_{j=1}^m \underset{\theta\in B_i}{\sup}\ \|\varphi(\theta,X_j)-\varphi(\phi_i,X_j)\|\Bigg\}.$$

The supremum are integrable by assumption, thus the law of large numbers applies:
\[
	U_m\underset{m\rightarrow+\infty}{\overset{a.s.}{\longrightarrow}}\underset{1\le
	i\le I}{\max}\ f_{\phi_i}(\eta_{(\phi_i,\varepsilon)})<\varepsilon.
\]

To sum up, we have just proven that for any $\varepsilon>0$, almost surely,
$$\underset{m\rightarrow+\infty}{\text{lim sup}}\ \left\{ \underset{\theta\in K}{\text{sup}}\ \left\|\frac{1}{m}\sum_{j=1}^m\varphi(\theta,X_j)-\mathbb{E}_{\psi}\big[\varphi(\theta,X)\big]\right\|\right\}<2\varepsilon.$$

Since $\varepsilon$ is arbitrary small, we get the first claim of the lemma.

Now, if $\widetilde{\theta}_m\rightarrow\widetilde{\theta}$, we have eventually $\|\widetilde{\theta}_m-\widetilde{\theta}\|\le\varepsilon$ with probability one. This yields the following inequality for $m$ large enough:
\begin{multline*}
\left\|\frac{1}{m}\sum_{j=1}^m\varphi(\widetilde{\theta}_m,X_j)-\mathbb{E}_{\psi}\Big[\varphi(\widetilde{\theta},X)\Big]\right\|\le\underset{\theta\in B (\widetilde{\theta}_m,\varepsilon)}{\text{sup}}\ \left\|\frac{1}{m}\sum_{j=1}^m\varphi(\theta,X_j)-\mathbb{E}_{\psi}\Big[\varphi(\theta,X)\Big]\right\|\\
+\left\|\mathbb{E}_{\psi}\Big[\varphi(\widetilde{\theta}_m,X)\Big]-\mathbb{E}_{\psi}\Big[\varphi(\widetilde{\theta},X)\Big]\right\|.
\end{multline*}

The first term converges to zero since the first claim of the lemma applies to the compact closure of $B(\widetilde{\theta},\varepsilon)$. The continuity of the map $\theta\mapsto\mathbb{E}_{\psi}\big[\varphi(\theta,X)\big]$ ensures that the second term also goes to zero, proving the second claim of the Lemma.

\subsubsection{Proof of Lemma \ref{lem:asymptotic_normality}}

 Let $\varepsilon>0$, and $G_n(\theta,\omega)=\nabla_{\theta}\ln(\theta,\omega$) defined on $B(\theta^{\star},\varepsilon)$. Define also $g_{k}^{(n)}(\theta)$ as the $k$-th component of $G_n(\theta)$. By assumption, for any $\delta>0$,
$$\left\{\omega\in\Omega:\max\Big(\|\widehat{\theta}_n-\theta^{\star}\| \ ,\ \|\sqrt{n}G_n(\widehat{\theta}_n)\| \ ,\ \underset{\theta\in B(\theta^{\star},\varepsilon)}{\sup}\|\nabla_{\theta}^2 \ln(\theta)-\mathcal{H}(\theta)\|\Big)\le\delta\right\}$$
defines a sequence of sets whose probability goes to one.

On any of these sets (for a fixed $\omega$), Taylor Lagrange's theorem ensures that we can find $(\widetilde{\theta}_j^{(n)})_{j=1,...,d}$ on the segment line $[\theta^{\star},\widehat{\theta}_n]$ such that
$$G_n(\widehat{\theta}_n)=G_n(\theta^{\star})+\mathbf{H}_n(\widehat{\theta}_n-\theta^{\star}),\qquad\mathbf{H}_n=\left(\begin{array}{c}
    \big(\nabla_{\theta}g_{1}^{(n)}(\widetilde{\theta}_1^{(n)})\big)^T   \\
     \vdots  \\
     \big(\nabla_{\theta}g_{d}^{(n)}(\widetilde{\theta}_{d}^{(n)})\big)^T
\end{array}\right).$$
In particular, for any $\delta\in]0,\varepsilon[$,
$$\|\mathbf{H}_n-\mathcal{H}(\theta^{\star})\|\le d \underset{\theta\in B(\theta^{\star},\varepsilon)}{\sup}\|\nabla_{\theta}^2 \ln(\theta)-\mathcal{H}(\theta)\|+\sum_{j=1}^d\|\mathcal{H}(\widetilde{\theta}_{j}^{(n)})-\mathcal{H}(\theta^{\star})\|.$$
For any $j=1,...,d$, the distance between $\widetilde{\theta}_{j}^{(n)}$ and $\theta^{\star}$ is at most $\delta$, and $\mathcal{H}$ is continuous, thus $\delta$ can always be chosen small enough such that $\mathbf{H}_n$ is invertible. We thus have:
$$\widehat{\theta}_n-\theta^{\star}=\mathbf{H}_n^{-1}\big\{G_n(\widehat{\theta}_n)-G_n(\theta^{\star})\big\}$$
$$\sqrt{n}(\widehat{\theta}_n-\theta^{\star})+\mathcal{H}(\theta^{\star})^{-1}\sqrt{n}G_n(\theta^{\star})=\mathbf{H}_n^{-1}\sqrt{n}G_n(\widehat{\theta}_n)-\big\{\mathbf{H}_n^{-1}-\mathcal{H}(\theta^{\star})^{-1}\big\}\sqrt{n}G_n(\theta^{\star})$$
The right hand side converges in probability to zero because $G_n(\widehat{\theta}_n)=o_{\mathbb{P}}(n^{-1/2})$ by assumption, and because $\sqrt{n}G_n(\theta^{\star})$ converges in distribution and is thus bounded in probability. The last equality being true on a sequence of sets whose probability goes to one, this implies that the left hand side must also converge to zero in probability.

The last conclusion follows from Slutsky's lemma.

\subsubsection{Proof of Lemma \ref{lem:CLT_dominated}}

Before proving the lemma, we recall a powerful result from \citet{galin_jones}. Under (X2), the chain $(X_j)_{j\ge1}$ is asymptotically
uncorrelated with exponential decay, i.e. there is some $\gamma>0$ such that
$$\rho(n)= \sup\Big\{\text{Corr}(U, V )\ ,\ U \in L^2(\mathcal{F}_1^k
)\ ,\ V \in L^2(\mathcal{F}_{k+n}^{\infty}) \ ,\ k \ge 1\Big\}=\bigO(e^{-\gamma n})$$
where $\mathcal{F}_k^m$ is the sigma-algebra generated by $X_k, . . . , X_m$.

Let $h_n=f_n-f$, and note that $\mathbb{V}_{\psi}(h_n(X_0))\le\mathbb{E}_{\psi}\big[(h_n(X_0))^2\big]\cvn0$ by dominated convergence. Combined with the previous result, this implies that
$$\frac{1}{n}\mathbb{V}\left(\sum_{i=1}^nh_n(X_i)\right)=\mathbb{V}_{\psi}(h_n(X_0))\times\left\{1+2\sum_{i=1}^n\frac{n-i}{n}\text{Corr}(h_n(X_0),h_n(X_{i}))\right\}\cvn0$$
since
$$\left|\sum_{i=1}^n\frac{n-i}{n}\mathrm{Corr}(h_n(X_0),h_n(X_{i}))\right|\le\sum_{i=1}^{+\infty}\rho(i)<+\infty.$$

The first claim of the lemma follows from Chebyshev's inequality, since for any $\varepsilon>0$
\begin{align*}
    \mathbb{P}\left(\frac{1}{n}\sum_{i=1}^nh_n(X_i)-\mathbb{E}\big[h_n(X)\big]\ge\frac{\varepsilon}{\sqrt{n}}\right)&\le\frac{1}{n\varepsilon^2}\mathbb{V}\left(\sum_{i=1}^nh_n(X_i)\right)\cvn0.
\end{align*}

Finally, under (X2) a $\sqrt{n}$-CLT holds for $f$ dominated by $g$
$$\sqrt{n}\left(\frac{1}{n}\sum_{i=1}^nf(X_i)-\mathbb{E}[f(X)]\right)\cvd\mathcal{N}\big(0,\sigma_f^2\big).$$
An application of Slutsky's lemma yields the second claim of the lemma.

\subsection{Proofs of the remaining lemmas}
\subsubsection{Proof of Lemma \ref{lem:full_rank_hessian}}
 Assumption (H2) ensures in particular that the partition function $\theta\mapsto\mathcal{Z}(\theta)$ is differentiable in a neighborhood of $\mle$.
Write the Hessian of the Poisson Transform as the following block matrix:
\[\nabla_{(\theta,\nu)}^2 \ln(\theta,\nu)=\left(\begin{array}{cc}
    A & b \\
     b^T & c
\end{array}\right)\]
where
\begin{align*}
    A&=\nabla_{\theta}^2 \ln(\theta,\nu)=\frac{1}{n}\sum_{i=1}^n\nabla_{\theta}^2\log h_{\theta}(y_i)-e^{\nu}\frac{\mathcal{Z}(\theta)}{\mathcal{Z}(\psi)}\frac{\nabla_{\theta}^2\mathcal{Z}(\theta)}{\mathcal{Z}(\theta)},\\
   b&=\nabla_{\theta} \frac{\partial}{\partial \nu}\ln(\theta,\nu)=-e^{\nu}\frac{\mathcal{Z}(\theta)}{\mathcal{Z}(\psi)}\frac{\nabla_{\theta}\mathcal{Z}(\theta)}{\mathcal{Z}(\theta)},\\
    c&=\frac{\partial^2}{\partial \nu^2} \ln(\theta,\nu)=-e^{\nu}\frac{\mathcal{Z(\theta)}}{\mathcal{Z}(\psi)}<0.
\end{align*}
The Hessian of the Poisson transform is negative definite if and only if Schur's complement of $c$ in the Hessian also is. Use the following equality to compute it: \[\nabla_{\theta}^2\log\mathcal{Z}(\theta)=\frac{\nabla_{\theta}^2\mathcal{Z}(\theta)}{\mathcal{Z}(\theta)}-\frac{\nabla_{\theta}\mathcal{Z}(\theta)\big(\nabla_{\theta}\mathcal{Z}(\theta)\big)^T}{\mathcal{Z}(\theta)^2},\]
\[A-bc^{-1}b^T=\frac{1}{n}\sum_{i=1}^n\nabla_{\theta}^2\log h_{\theta}(y_i)-e^{\nu}\frac{\mathcal{Z}(\theta)}{\mathcal{Z}(\psi)}\nabla_{\theta}^2\log\mathcal{Z}(\theta).\]
 
 At the point $\xi=\emle$, Schur's complement of $c$ is also the Hessian of the log likelihood:
 \[\nabla_{\theta}^2\ln(\theta)=\frac{1}{n}\sum_{i=1}^n\nabla_{\theta}^2\log h_{\theta}(y_i)-\nabla_{\theta}^2\log\mathcal{Z}(\theta),\qquad\Big\{e^{\nu}\frac{\mathcal{Z}(\theta)}{\mathcal{Z}(\psi)}\Big\}_{\big|_{\xi=\emle}}=1.\]
 
 \subsubsection{Proof of Lemma \ref{lem:usc_nce}}

 Let $\xi_n\rightarrow\xi$, we have 
$$\underset{n\rightarrow+\infty}{\lim}\ \underset{k\ge n}{\sup}\ \mathcal{M}_{\tau}^{\mathrm{NCE}}(\xi_k)\le\frac{1}{\mathcal{Z}(\psi)}\underset{n\rightarrow+\infty}{\lim}\left\{\int_{\mathcal{X}}\underset{k\ge n}{\sup}\ \varphi_k(x)\mu(\dx)\right\},$$
where
$$\varphi_k(x)=\log\bigg\{\frac{e^{\nu_k}h_{\theta_k}(x)}{e^{\nu^{\star}}h_{\theta^{\star}}(x)}\bigg\}e^{\nu^{\star}}h_{\theta^{\star}}(x)+\log\bigg\{\frac{\tau h_{\psi}(x)+e^{\nu^{\star}}h_{\theta^{\star}}(x)}{\tau h_{\psi}(x)+e^{\nu_k}h_{\theta_k}(x)}\bigg\}\big(\tau h_{\psi}(x)+e^{\nu^{\star}}h_{\theta^{\star}}(x)\big).$$
The sequence $\big\{\sup_{k\ge n}\ \varphi_k\big\}$ is a decreasing sequence
converging pointwise. It may be bounded from above thanks to the log-sum
inequality, since for any $k$ we have
$$\varphi_k\le\log\bigg\{\frac{e^{\nu_k}h_{\theta_k}}{e^{\nu^{\star}}h_{\theta^{\star}}}\bigg\}e^{\nu^{\star}}h_{\theta^{\star}}+\log\bigg\{\frac{\tau h_{\psi}}{\tau h_{\psi}}\bigg\}\tau h_{\psi}+\log\bigg\{\frac{e^{\nu^{\star}}h_{\theta^{\star}}}{e^{\nu_k}h_{\theta_k}}\bigg\}e^{\nu^{\star}}h_{\theta^{\star}}=0.$$
Monotone convergence theorem applies:
$$\underset{n\rightarrow+\infty}{\lim}\ \underset{k\ge n}{\sup}\ \mathcal{M}_{\tau}^{\mathrm{NCE}}(\xi_k)\le\frac{1}{\mathcal{Z}(\psi)}\int_{\mathcal{X}}\underset{n\rightarrow+\infty}{\lim}\ \varphi_n(x)\mu(\dx)=\mathcal{M}_{\tau}^{\mathrm{NCE}}(\xi).$$

\subsubsection{Proof of Lemma \ref{lem:empty_matrix}}
Without loss of generality, we may suppose that $\mathbb{E}_{\theta}[Z]=1$.
Recall the following expressions:
$$\nabla g_{\xi}=\left(\begin{array}{c}
\nabla g_{\theta}  \\
1  \end{array} \right) \hspace{1cm} \nabla\nabla^Tg_{\xi}=\left(\begin{array}{cc}
\nabla\nabla^Tg_{\theta} & \nabla g_{\theta}  \\
\nabla^Tg_{\theta} & 1  \end{array} \right).$$
We thus have
$$\mathbb{E}_{\theta}\big[\nabla g_{\xi}Z\big]\mathbb{E}_{\theta}\big[\nabla^T g_{\xi}Z\big]=\left(\begin{array}{cc}
\mathbb{E}_{\theta}[\nabla g_{\theta}Z]\mathbb{E}_{\theta}[\nabla^Tg_{\theta}Z] & \mathbb{E}_{\theta}[\nabla g_{\theta}Z]  \\
\mathbb{E}_{\theta}[\nabla^Tg_{\theta}Z] & 1  \end{array} \right),$$
$$\mathbb{E}_{\theta}\big[\nabla\nabla^Tg_{\xi}Z\big]=\left(\begin{array}{cc}
\mathbb{E}_{\theta}[\nabla\nabla^Tg_{\theta}Z] & \mathbb{E}_{\theta}[\nabla g_{\theta}Z]  \\
\mathbb{E}_{\theta}[\nabla^Tg_{\theta}Z] & 1  \end{array} \right).$$
We use the following decomposition
$$\mathbb{E}_{\theta}\big[\nabla g_{\xi}Z\big]\mathbb{E}_{\theta}\big[\nabla^T g_{\xi}Z\big]=\mathbb{E}_{\theta}\big[\nabla\nabla^Tg_{\xi}Z\big]-\left(\begin{array}{cc}
A_Z & 0  \\
0 & 0  \end{array} \right)$$
where Schur's complement $A_Z=\mathbb{E}_{\theta}[\nabla\nabla^Tg_{\theta}Z]-\mathbb{E}_{\theta}[\nabla g_{\theta}Z]\mathbb{E}_{\theta}[\nabla^Tg_{\theta}Z]$ is definite positive.

So we can re-write the matrix $\mathbf{M}$ as:
$$\mathbf{M}=\mathbb{E}_{\theta}\big[\nabla\nabla^Tg_{\xi}Z\big]^{-1}-\mathbb{E}_{\theta}\big[\nabla\nabla^Tg_{\xi}Z\big]^{-1}\left(\begin{array}{cc}
A_Z & 0  \\
0 & 0  \end{array} \right)\mathbb{E}_{\theta}\big[\nabla\nabla^Tg_{\xi}Z\big]^{-1}.$$
Now, on the one hand, an inverse block matrix calculation yields
$$\mathbb{E}_{\theta}\big[\nabla\nabla^Tg_{\xi}Z\big]^{-1}=\left(\begin{array}{cc}
A_Z^{-1} & -A_Z^{-1}\mathbb{E}_{\theta}[\nabla g_{\theta}Z]  \\
-\mathbb{E}_{\theta}[\nabla^T g_{\theta}Z]A_Z^{-1} & 1+\mathbb{E}_{\theta}[\nabla^T g_{\theta}Z]A_Z^{-1}\mathbb{E}_{\theta}[\nabla g_{\theta}Z]  \end{array} \right),$$
while, on the other hand, a direct computation yields
\begin{multline*}
    \mathbb{E}_{\theta}\big[\nabla\nabla^Tg_{\xi}Z\big]^{-1}\left(\begin{array}{cc}
A_Z & 0  \\
0 & 0  \end{array} \right)\mathbb{E}_{\theta}\big[\nabla\nabla^Tg_{\xi}Z\big]^{-1}\\=\left(\begin{array}{cc}
A_Z^{-1} & -A_Z^{-1}\mathbb{E}_{\theta}[\nabla g_{\theta}Z]  \\
-\mathbb{E}_{\theta}[\nabla^T g_{\theta}Z]A_Z^{-1} & \mathbb{E}_{\theta}[\nabla^T g_{\theta}Z]A_Z^{-1}\mathbb{E}_{\theta}[\nabla g_{\theta}Z]  \end{array} \right).
\end{multline*}
The matrix $\mathbf{M}$ being the difference between these two quantities, we get the claim of the lemma.

\subsection{Proofs of MC-MLE consistency and asymptotic normality}
\subsubsection{MC-MLE consistency}
The following proof is a straightforward adaptation of Wald's proof of consistency for the MLE (\citet{wald_1949}). The sketch of proof is mainly inspired from \citet{geyer_lecture_notes}, which has the merit of giving a very accessible presentation of this technical proof.

To begin, define the opposite of the Kullback-Leibler divergence:
\[\lambda(\theta)=\mathbb{E}_{\theta^{\star}}\left[\log\frac{f_{\theta}(Y)}{f_{\theta^{\star}}(Y)}\right]\le0.\]

Since the model is identifiable, $\lambda$ has a unique maximizer achieved at $\theta^{\star}$.
It may be $-\infty$ for some values of $\theta$, but this does not pose
problems in the following proof.

For convenience, we choose to analyse the MC-MLE objective function through the
following translational motion (sharing the same maximiser with $\lIS$):
\[
M_{n}^{\mathrm{IS}}(\theta,\nu)=\frac{1}{n}\sum_{i=1}^{n}\log\Big\{\frac{e^{\nu}h_{\theta}(Y_{i})}{e^{\nu^{\star}}h_{\theta^{\star}}(Y_{i})}\Big\}+1-e^{\nu}\frac{1}{r_{n}}\sum_{j=1}^{r_{n}}\frac{h_{\theta}(X_{j})}{h_{\psi}(X_{j})}.
\]

For any $\xi\in\Xi=\Theta\times\mathbb{R}$,
the law of large numbers yields $M_{n}^{\mathrm{IS}}(\xi)\overset{a.s.}{\underset{n\rightarrow+\infty}{\longrightarrow}}\mathcal{M}^{\mathrm{IS}}(\xi)$
where 
\[
\mathcal{M}^{\mathrm{IS}}(\theta,\nu)=\lambda(\theta)+\nu+\log\frac{\mathcal{Z}(\theta)}{\mathcal{Z}(\psi)}+1-e^{\nu}\frac{\mathcal{Z}(\theta)}{\mathcal{Z}(\psi)}\le0.
\]

Note that by construction $\mathcal{M}^{\mathrm{IS}}$ also has a unique maximiser at
$\xi^{\star}=(\theta^{\star},\nu^{\star})$. 

Let $\eta>0$. Define $K_{\eta}=\{\xi\in K:d(\xi,\xi^{\star})\ge\eta\}$
where $K$ is the compact set defined in (C2). Under (H3), continuity of the maps $\theta\mapsto h_{\theta}(x)$ and monotone
convergence ensure that for any $\xi\in K_{\eta}$,
\[
\lim_{\varepsilon\downarrow0}\mathbb{E}_{\theta^{\star}}\left[\underset{(\phi,\mu)\in B(\xi,\varepsilon)}{\sup}\log\frac{e^{\mu}h_{\phi}(Y)}{e^{\nu^{\star}}h_{\theta^{\star}}(Y)}\right]=\mathbb{E}_{\theta^{\star}}\left[\log\frac{e^{\nu}h_{\theta}(Y)}{e^{\nu^{\star}}h_{\theta^{\star}}(Y)}\right],
\]
while dominated convergence ensures that 
\[
\lim_{\varepsilon\downarrow0}\mathbb{E}_{\psi}\left[\underset{(\phi,\mu)\in B(\xi,\varepsilon)}{\inf}e^{\mu}\frac{h_{\phi}(X)}{h_{\psi}(X)}\right]=e^{\nu}\frac{\mathcal{Z}(\theta)}{\mathcal{Z}(\psi)}.
\]
Thus for any $\xi\in K_{\eta}$ and $\gamma>0$,
we can always find $\varepsilon_{\xi}>0$ such that simultaneously:
\[
\mathbb{E}_{\theta^{\star}}\left[\underset{(\phi,\mu)\in B(\xi,\varepsilon_{\xi})}{\sup}\log\frac{e^{\mu}h_{\phi}(Y)}{e^{\nu^{\star}}h_{\theta^{\star}}(Y)}\right]\le\mathbb{E}_{\theta^{\star}}\left[\log\frac{e^{\nu}h_{\theta}(Y)}{e^{\nu^{\star}}h_{\theta^{\star}}(Y)}\right]+\frac{\gamma}{2},
\]
and 
\[
\mathbb{E}_{\psi}\left[\underset{(\phi,\mu)\in B(\xi,\varepsilon_{\xi})}{\inf}e^{\mu}\frac{h_{\phi}(X)}{h_{\psi}(X)}\right]\ge e^{\nu}\frac{\mathcal{Z}(\theta)}{\mathcal{Z}(\psi)}-\frac{\gamma}{2}.
\]
The set of open balls $\{B(\xi,\varepsilon_{\xi})\ :\ \xi\in K\}$
form an open cover of $K_{\eta}$ from which we can extract a finite
subcover by compactness, i.e. we can build a finite set $\{\xi_{1},...,\xi_{p}\}\subset K_{\eta}$
such that $K_{\eta}\subset\bigcup_{k=1}^{p}B(\xi_{k},\varepsilon_{\xi_{k}})$.
This yields the following inequality: 
\begin{multline*}
\underset{\xi\in K_{\eta}}{\sup}M_{n}^{\mathrm{IS}}(\xi)  \le\underset{k=1,...,p}{\max}\Bigg\{\frac{1}{n}\sum_{i=1}^{n}\underset{(\phi,\mu)\in B(\xi_{k},\varepsilon_{\xi_{k}})}{\sup}\left(\log\frac{e^{\mu}h_{\phi}(Y_{i})}{e^{\nu^{\star}}h_{\theta^{\star}}(Y_{i})}\right)\\
  +1-\frac{1}{r_{n}}\sum_{j=1}^{r_{n}}\underset{(\phi,\mu)\in B(\xi_{k},\varepsilon_{\xi_{k}})}{\inf}\left(e^{\mu}\frac{h_{\phi}(X_{j})}{h_{\psi}(X_{j})}\right)\Bigg\}. 
\end{multline*}
The right hand side converges almost surely as the law of large numbers
applies simultaneously on a finite set. We can thus bound the upper limit: 
\begin{multline*}
\underset{n\rightarrow+\infty}{\lim\sup}\underset{\xi\in K_{\eta}}{\sup}M_{n}^{\mathrm{IS}}(\xi)  \le\underset{k=1,...,p}{\max}\Bigg\{\mathbb{E}_{\theta^{\star}}\left[\underset{(\phi,\mu)\in B(\xi_{k},\varepsilon_{\xi_{k}})}{\sup}\left(\log\frac{e^{\mu}h_{\phi}(Y)}{e^{\nu^{\star}}h_{\theta^{\star}}(Y)}\right)\right]\\
  \quad+1-\mathbb{E}_{\psi}\left[\underset{(\phi,\mu)\in B(\xi_{k},\varepsilon_{\xi_{k}})}{\inf}\left(e^{\mu}\frac{h_{\phi}(X)}{h_{\psi}(X)}\right)\right]\Bigg\},
\end{multline*}
\[ \underset{n\rightarrow+\infty}{\lim\sup}\underset{\xi\in K_{\eta}}{\sup}M_{n}^{\mathrm{IS}}(\xi) \le\underset{k=1,...,p}{\max} \mathcal{M}^{\mathrm{IS}}(\xi_{k})+\gamma\le\underset{\xi\in K_{\eta}}{\sup}\mathcal{M}^{\mathrm{IS}}(\xi)+\gamma.\]
Moreover $\gamma$ is arbitrary small, thus the inequality still holds when $\gamma$ is zero:
\begin{equation}
\underset{n\rightarrow+\infty}{\lim\sup}\underset{\xi\in K_{\eta}}{\sup}M_{n}^{\mathrm{IS}}(\xi)\le\underset{\xi\in K_{\eta}}{\sup}\mathcal{M}^{\mathrm{IS}}(\xi)\hspace{0.5cm}\as \label{eq:heart_IS}
\end{equation}
To conclude, let us prove that the right hand side is negative. Indeed, subadditivity of the supremum yields \[\underset{\xi\in K_{\eta}}{\sup}\mathcal{M}^{\mathrm{IS}}(\theta,\nu)\le\underset{\xi\in K_{\eta}}{\sup}\lambda(\theta)+\underset{\xi\in K_{\eta}}{\sup}\left(\nu+\log\frac{\mathcal{Z}(\theta)}{\mathcal{Z}(\psi)}+1-e^{\nu}\frac{\mathcal{Z}(\theta)}{\mathcal{Z}(\psi)}\right)\]
where the second term is non positive by construction. Under (H3), it is easy to check that $\lambda$ is upper semi continuous, which implies in particular that $\lambda$ achieves its maximum on any compact set. Consequently: $\underset{\xi\in K_{\eta}}{\sup}\mathcal{M}^{\mathrm{IS}}(\xi)\le\underset{\xi\in K_{\eta}}{\sup}\lambda(\theta)<0$.

The last part of the proof is the same as for NCE consistency (see the appendix).

 \subsubsection{MC-MLE asymptotic normality}
For convenience, for any $\xi=(\theta,\nu)$, we note $g_{\xi}(x)=\nu+\log h_{\theta}(x)$.

Let $G_n^{\mathrm{IS}}(\xi)=\nabla_{\xi}\lIS(\xi)$ and $\mathbf{H}_n^{\mathrm{IS}}(\xi)=\nabla_{\xi}^2\lIS(\xi)$. We have
$$G_n^{\mathrm{IS}}(\xi)=\frac{1}{n}\sum_{i=1}^n\nabla_{\xi}g_{\xi}(Y_i)-\frac{1}{m_n}\sum_{j=1}^{m_n}\nabla_{\xi}g_{\xi}(X_j)\frac{\exp\{g_{\xi}(X_j)\}}{h_{\psi}(X_j)},$$
\begin{equation}
\mathbf{H}_n^{\mathrm{IS}}(\xi)=\frac{1}{n}\sum_{i=1}^n\nabla_{\xi}^2g_{\xi}(Y_i)-\frac{1}{m_n}\sum_{j=1}^{m_n}\Big\{(\nabla_{\xi}^2+\nabla_{\xi}\nabla_{\xi}^T)g_{\xi}(X_j)\Big\}\frac{\exp\{g_{\xi}(X_j)\}}{h_{\psi}(X_j)}.\label{eq:hess_is}
\end{equation}

We start by proving that, almost surely,
\begin{equation}\underset{\xi\in B(\xi^{\star},\varepsilon)}{\sup}\|\mathbf{H}_n^{\mathrm{IS}}(\xi)-\mathbf{H}(\xi)\|\cvn0,\label{eq:cv_hess_is}\end{equation}
where 
$$\mathbf{H}(\xi)=\mathbb{E}_{\theta^{\star}}\bigg[\nabla_{\xi}^2g_{\xi}(Y)\bigg]-\mathbb{E}_{\psi}\bigg[\Big\{(\nabla_{\xi}^2+\nabla_{\xi}\nabla_{\xi}^T)g_{\xi}(X)\Big\}\frac{\exp\{g_{\xi}(X)\}}{h_{\psi}(X)}\bigg].$$

To prove \eqref{eq:cv_hess_is}, split the supremum norm in two and apply Lemma \ref{lem:ULLN_theorem} to both empirical averages in definition \eqref{eq:hess_is}. Both supremum norms are integrable under (H4), this is proven in the following.
$$\nabla_{\xi}^2g_{\xi}(x)=\left(\begin{array}{cc}
\nabla_{\theta}^2\log h_{\theta}(x) & 0  \\
0 & 0  \end{array} \right)\hspace{0.7cm}\nabla_{\xi}\nabla_{\xi}^Tg_{\xi}(x)=\left(\begin{array}{cc}
\nabla_{\theta}\nabla_{\theta}^T\log h_{\theta}(x) & \nabla_{\theta}\log h_{\theta}(x)  \\
\nabla_{\theta}^T\log h_{\theta}(x) & 1  \end{array} \right)$$
First supremum norm is integrable under (H4), since
$$\int_{\mathcal{X}}\underset{\xi\in B(\xi^{\star},\varepsilon)}{\text{sup}}\ \|\nabla_{\xi}^2g_{\xi}(x)\|h_{\theta^{\star}}\mu(\dx)\le\int_{\mathcal{X}}\underset{\theta\in B(\theta^{\star},\varepsilon)}{\text{sup}}\ \|\nabla_{\theta}^2\log h_{\theta}(Y)\|\underset{\theta\in B(\theta^{\star},\varepsilon)}{\text{sup}}h_{\theta}(x)\mu(\dx)<+\infty.$$

For the second one, use the following decomposition:
$$\|(\nabla_{\xi}^2+\nabla_{\xi}\nabla_{\xi}^T)g_{\xi}(x)\|_1=\|(\nabla_{\theta}^2+\nabla_{\theta}\nabla_{\theta}^T)\log h_{\theta}(x)\|_1+2\|\nabla_{\theta}\log h_{\theta}(x)\|_1+1,$$
$$\|(\nabla_{\theta}^2+\nabla_{\theta}\nabla_{\theta}^T)\log h_{\theta}(x)\|_1\le\|\nabla_{\theta}^2\log h_{\theta}(x)\|_1+\|\nabla_{\theta}\log h_{\theta}(x)\|_1^2,$$
$$\|\nabla_{\theta}\log h_{\theta}(x)\|_1\le1+\|\nabla_{\theta}\log h_{\theta}(x)\|_1^2.$$
This yields a finite upper bound under (H4), since
\begin{multline*}
    \int_{\mathcal{X}}\underset{\xi\in B(\xi^{\star},\varepsilon)}{\sup}\|(\nabla_{\xi}^2+\nabla_{\xi}\nabla_{\xi}^T)g_{\xi}(x)\|_1\exp\{g_{\xi}(x)\}\mu(\dx)\le\\
     e^{\nu^{\star}+\varepsilon}\int_{\mathcal{X}}\underset{\theta\in B(\theta^{\star},\varepsilon)}{\sup}\Big(\|\nabla_{\theta}^2\log h_{\theta}(x)\|_1+3\|\nabla_{\theta}\log h_{\theta}(x)\|_1^2+3\Big)\underset{\theta\in B(\theta^{\star},\varepsilon)}{\text{sup}}h_{\theta}(x)\mu(\dx)<+\infty.
\end{multline*}
Note also that, at the point $\xi=\xi^{\star}$, functions $\mathbf{H}$ and $-\mathbf{J}$ coincide, where
$$\mathbf{J}(\xi)=\mathbb{E}_{\theta}\left[\nabla_{\xi}\nabla_{\xi}^Tg_{\xi}(Y)\right]=\left(\begin{array}{cc}
\mathbb{E}_{\theta}\Big[\nabla_{\theta}\nabla_{\theta}^T\log h_{\theta}(Y)\Big] & \mathbb{E}_{\theta}\Big[\nabla_{\theta}\log h_{\theta}(Y)\Big]  \\
\mathbb{E}_{\theta}\Big[\nabla_{\theta}^T\log h_{\theta}(Y)\Big] & 1  \end{array} \right).$$
In particular, the matrix $\mathbf{J}(\xi^{\star})$ is definite positive, since Schur's complement is also the Fisher Information, definite positive by assumption:
$$\mathbb{E}_{\theta}\Big[\nabla_{\theta}\nabla_{\theta}^Tg_{\theta}(Y)\Big]-\mathbb{E}_{\theta}\Big[\nabla_{\theta}g_{\theta}(Y)\Big]\mathbb{E}_{\theta}\Big[\nabla_{\theta}^Tg_{\theta}(Y)\Big]=\mathbb{V}_{\theta}\Big(\nabla_{\theta}\log h_{\theta}(Y)\Big)=\mathbf{I}(\theta).$$

Now we establish the weak convergence of the gradient. We have
\begin{multline*}
    \sqrt{n}G_n^{\mathrm{IS}}(\xi^{\star})=\sqrt{n}\left(\frac{1}{n}\sum_{i=1}^n\nabla_{\xi}g_{\xi}(Y_i)-\mathbb{E}_{\theta}\big[\nabla_{\xi}g_{\xi}(Y)\big]\right)_{|_{\xi=\xi^{\star}}}\\
    -\sqrt{\frac{n}{m_n}}\sqrt{m_n}\left(\frac{1}{m_n}\sum_{j=1}^{m_n}\nabla_{\xi}g_{\xi}(X_j)\frac{f_{\theta}(X_j)}{f_{\psi}(X_j)}-\mathbb{E}_{\theta}\big[\nabla_{\xi}g_{\xi}(Y)\big]\right)_{|_{\xi=\xi^{\star}}}.
\end{multline*}
Simulations and observations are assumed independent, thus Slutsky's lemma yields
$$\sqrt{n}G_n^{\mathrm{IS}}(\xi^{\star})\cvd\mathcal{N}_{d+1}\Big(0, \boldsymbol{\Sigma}(\xi^{\star})+\tau^{-1}\boldsymbol{\Gamma}(\xi^{\star})\Big),$$
where $$\boldsymbol{\Sigma}(\xi)=\mathbb{V}_{\theta}\Big(\nabla_{\xi}g_{\xi}(Y)\Big)=\left(\begin{array}{cc}
\mathbf{I}(\theta) & 0  \\
0 & 0  \end{array} \right),$$
and $$\boldsymbol{\boldsymbol{\Gamma}}(\xi)=\mathbb{V}_{\psi}\left(\varphi_{\xi}^{\mathrm{IS}}(X)\right)+2\sum_{i=1}^{+\infty}\mathrm{Cov}\Big(\varphi_{\xi}^{\mathrm{IS}}(X_0),\varphi_{\xi}^{\mathrm{IS}}(X_i)\Big),\qquad\varphi_{\xi}^{\mathrm{IS}}=(\nabla_{\xi}g_{\xi})\frac{f_{\theta}}{f_{\psi}}.$$

Finally, Lemma \ref{lem:asymptotic_normality} applies:
$$\sqrt{n}\left(\emcmle-\xi^{\star}\right)\cvd\mathcal{N}\Big(0_{\mathbb{R}^{d+1}},\mathbf{V}_{\tau}^{\mathrm{IS}}(\xi^{\star})\Big)$$
where $\mathbf{V}_{\tau}^{\mathrm{IS}}(\xi)=\mathbf{J}(\xi)^{-1}\left\{\boldsymbol{\Sigma}(\xi)+\tau^{-1}\boldsymbol{\Gamma}(\xi)\right\}\mathbf{J}(\xi)^{-1}$.

\subsection{Proofs related to exponential families}

The following calculations are entirely classical. For the sake of completeness, we present the few tricks required for proving Propositions 1, 2 and 3.

To begin, define $b(x)=\sgn(S(x))$, the vector composed by the signs of each component of $S(x)$. Note that for any $\theta\in\Theta$, the following supremum is necessarily achieved on the boundary of the 1-ball, in the direction of the sign vector:
\begin{equation}
\underset{\|\phi-\theta\|_1\le\varepsilon}{\text{sup}}\exp\left\{\phi^TS(x)\right\}= \exp\left\{(\theta+\varepsilon b(x))^TS(x)\right\}.\label{eq:exp_1}
\end{equation}
Since $\|S(x)\|_1=b(x)^TS(x)$, we have (for the 1-norm for instance):
$$\underset{\phi\in B(\theta,\varepsilon)}{\sup}\left(\log\frac{h_{\phi}(x)}{h_{\theta^{\star}}(x)}\right)=(\theta-\theta^{\star})^TS(x)+\varepsilon\|S(x)\|\le(\|\theta-\theta^{\star}\|+\varepsilon)\|S(x)\|,$$
which proves the claim of Proposition 2, since
\[ \int_{\setX}\underset{\phi\in B(\theta,\varepsilon)}{\sup}\left(\log\frac{h_{\phi}(x)}{h_{\theta^{\star}}(x)}\right)_{+}h_{\theta^{\star}}(x)\mu(\dx)\le(\|\theta-\theta^{\star}\|+\varepsilon)\int_{\setX}\|S(x)\|h_{\theta^{\star}}(x)\mu(\dx)<+\infty.\]

For Propositions 1 and 3, use also the fact that $\|S(x)\|_1=b(x)^TS(x)$
and that $y\le e^y$ for any $y\in\R$. We have
\begin{equation}
\left\|S(x)\right\|_1^2\le\varepsilon^{-2}\exp\left\{2\varepsilon b(x)^TS(x)\right\}.\label{eq:exp_2}
\end{equation}
 Equations \eqref{eq:exp_1} and \eqref{eq:exp_2} can be combined as follows:
 \begin{multline*}
     \int_{\setX}(1+\|S(x)\|^2)\underset{\phi\in B(\theta,\varepsilon)}{\sup}h_{\phi}(x)\mu(\dx)\le\sum_{b\in\{-1,1\}^d}\int_{\setX}\exp\left\{(\theta+b\varepsilon)^TS(x)\right\}\mu(\dx) \\ +\varepsilon^{-2}\sum_{b\in\{-1,1\}^d}\int_{\setX}\exp\left\{(\theta+3b\varepsilon)^TS(x)\right\}\mu(\dx),
 \end{multline*}
 and
 \begin{multline*}
     \mathbb{E}_{\psi}\left[(1+\|S(X)\|^2)\underset{\phi\in B(\theta,\varepsilon)}{\sup}\left(\frac{h_{\phi}(X)}{h_{\psi}(X)}\right)^2\right]\le \sum_{b\in\{-1,1\}^d} \mathbb{E}_{\psi}\left[\left(\frac{\exp\left\{(\theta+b\varepsilon)^TS(X)\right\}}{h_{\psi}(X)}\right)^2\right]\\ +\varepsilon^{-2}\sum_{b\in\{-1,1\}^d} \mathbb{E}_{\psi}\left[\left(\frac{\exp\left\{(\theta+2b\varepsilon)^TS(X)\right\}}{h_{\psi}(X)}\right)^2\right].
 \end{multline*}
Choosing $\theta=\mle$ in the preceding inequalities yields Proposition 1, while choosing $\theta=\theta^{\star}$ yields Proposition 3.
\end{document}